\theoremstyle{definition}
\newtheorem{theorem}{Theorem}[section]
\newtheorem{proposition}[theorem]{Proposition}
\newtheorem{lemma}[theorem]{Lemma}
\newtheorem{definition}[theorem]{Definition}
\newtheorem{example}[theorem]{Example}
\renewcommand{\qed}{\hfill$\square$}
\newcommand{\lt}[2]{#1~\underline{\triangleright}~#2}
\newcommand{\ut}[2]{#1~\overline{\triangleright}~#2}
\begin{document}

\title{\large\bf 
BIQUANDLE COHOMOLOGY AND STATE-SUM INVARIANTS OF LINKS AND SURFACE-LINKS
}

\author{ 
{Seiichi Kamada, Akio Kawauchi, Jieon Kim and Sang Youl Lee}
\bigskip\\
{\small\it 
Department of Mathematics, Osaka City University,
}\\ 
{\small\it  Sugimoto, Sumiyoshi-ku, Osaka 558-8585, Japan}\\
{\small\it skamada@sci.osaka-cu.ac.jp}
\smallskip\\
{\small\it 
Osaka City University Advanced Mathematical Institute, Osaka City University,
}\\ 
{\small\it  Sugimoto, Sumiyoshi-ku, Osaka 558-8585, Japan}\\
{\small\it kawauchi@sci.osaka-cu.ac.jp}
\smallskip\\
{\small\it 
Department of Mathematics, Pusan National University,
}\\ 
{\small\it Busan 46241, Republic of Korea}\\
{\small\it jieonkim@pusan.ac.kr}
\smallskip\\
{\small\it 
Department of Mathematics, Pusan National University,
}\\ 
{\small\it Busan 46241, Republic of Korea}\\
{\small\it sangyoul@pusan.ac.kr}}

\renewcommand\leftmark{\centerline{\footnotesize 
S. Kamada, A. Kawauchi, J. Kim \& S. Y. Lee}}
\renewcommand\rightmark{\centerline{\footnotesize 
Biquandle cohomology and state-sum invariants of links and surface-links
}}

\date{}

\maketitle

\begin{abstract}
In this paper, we discuss the (co)homology theory of biquandles, derived biquandle cocycle invariants for oriented surface-links using broken surface diagrams and how to compute the biquandle cocycle invariants from marked graph diagrams. We also develop the shadow (co)homology theory of biquandles and construct the shadow biquandle cocycle invariants for oriented surface-links.
\end{abstract}


\section{Introduction}\label{sect-intr}

In \cite{Jo}, D. Joyce introduced an algebraic structure known as a {\it quandle}, which is a set $X$ with a binary operation satisfying certain conditions coming from oriented Reidemeister moves for oriented link diagrams (see also \cite{Ma}). Quandles were generalized to racks in \cite{FeRo} and racks were generalized to biracks in \cite{FeRoSa1} and a (co)homology theory for racks and biracks was introduced. In the quandle case, a subcomplex was defined corresponding to Reidemeister moves of type I and this leads to the quandle (co)homology theory and quandle cocycle invariants of links and surface-links in \cite{CJKLS}. In \cite{CKS}, the {\it shadow quandle cocycle invariants} was also defined for links and surface-links. These invariants are defined as the state-sums over all quandle colorings of arcs and sheets and corresponding Boltzman weights that are the evaluations of a given quandle 2 and 3-cocycle at the crossings and triple points in a link diagram and broken surface diagram, respectively. In \cite{KKL}, S. Kamada, J. Kim and S. Y. Lee developed an interpretation of the quandle and shadow quandle cocycle invariants of surface-links in terms of marked graph diagram presentation of surface-links. 

On the other hand, a generalization of quandles (called biquandles) is introduced in \cite{KaRa}. A {\it  biquandle} is an algebraic structure with two binary operations satisfying certain conditions which can be presented by semi-arcs of (virtual) links (or semi-sheets of surface-links) as its generators modulo oriented Reidemeister moves (or Roseman moves). In \cite{CES}, J. S. Carter, M. Elhamdadi and M. Saito introduced a (co)homology theory for the set-theoretic Yang-Baxter equations and cocycles are used to define invariants via colorings of (virtual) link diagrams by biquandles and a state-sum formulation. In \cite{NeRo}, S. Nelson and J. Rosenfield introduced a generalization of biquandle homology to the case of an {\it involutory} biquandle (also known as a {\it bikei}), called bikei homology, and used bikei $2$-cocycles to enhance the bikei counting invariant for unoriented knots and links as well as unoriented and non-orientable surface-links. 

In this paper, we discuss the (co)homology theory of biquandles and develop the biquandle cocycle invariants for oriented surface-links by using broken surface diagrams generalizing quandle cocycle invariants. Then we show how to compute the biquandle cocycle invariants from marked graph diagrams. Further, we develop the shadow biquandle (co)homology theory and construct the shadow biquandle cocycle invariants for oriented surface-links presented by broken surface diagrams and also marked graph diagrams generalizing shadow quandle cocycle invariants. We also discuss a method of computing the shadow biquandle cocycle invariants from marked graph diagrams.

This paper is organized as follows. In Section \ref{sect-mgd}, we review two presentations of surface-links, broken surface diagrams and marked graph diagrams.
  In Section \ref{sect-coh}, we recall the definition of a biquandle and examples. In Section \ref{sect-bqcs}, we review the fundamental biquandles and discuss biquandle colorings for marked graph diagrams and broken surface diagrams. In Section \ref{sect-bqht}, we review the (co)homology groups of biquandles. In Section \ref{sect-cisl}, the biquandle cocycle invariants of oriented links and surface-links presented by broken surface diagrams are introduced. In Section \ref{sect-qcocm}, we introduce a method of computing biquandle $3$-cocycle invariants from marked graph diagrams. In Section \ref{sect-scicl} we develope the shadow (co)homology theory of biquandles and construct the shadow biquandle cocycle invariants of oriented surface-links.  In Section \ref{sect-sqcocm}, we introduce a method of computing shadow biquandle $3$-cocycle invariants from marked graph diagrams.


\section{Presentations of surface-links}
\label{sect-mgd}

A {\it surface-link} is a closed surface smoothly (or piecewise linearly and locally flatly) embedded in the Euclidian $4$-dimensional space $\mathbb R^4$. Two surface-links $\mathcal L$ and $\mathcal L'$ are {\it equivalent} if they are ambient isotopic. That is, equivalently, there exists an orientation preserving diffeomorphism (or PL homeomorphism)  $h:\mathbb R^4\rightarrow \mathbb R^4$ such that $h(\mathcal L)=\mathcal L'.$ When $\mathcal L$ and $\mathcal L'$ are oriented, it is assumed that the restriction $h|_{\mathcal L}:\mathcal L\rightarrow \mathcal L'$ is also an orientation preserving homeomorphism. Throughout this paper a surface-link means an oriented surface-link unless otherwise stated.

Let $f:F\rightarrow \mathbb R^4$ be a smooth embedding of a closed surface $F$ in $\mathbb R^4$ and let $p:\mathbb R^4\rightarrow \mathbb R^3$ be the orthogonal projection onto an affine subspace, identified with $\mathbb R^3$,  which does not intersect $\mathcal L=f(F).$ By deforming the map $f$ slightly by an ambient isotopy of $\mathbb R^4$ if necessary, we may assume that the map $p\circ f : F \to \mathbb R^3$ is a generic map. This means that for any $y=p(f(x)) \in \mathbb R^3$ there is a neighborhood $N(y) \subset \mathbb R^3$ and a diffeomorphism $\psi: N(y) \rightarrow \mathbb R^3$ such that the image of $p(f(F))\cap N(y)$ under $\psi$ looks like the intersection of $1,2,$ $3$ coordinate planes or the cone on a  figure eight (Whitney umbrella) as depicted in Fig.~\ref{fig-gen}. In these cases, the point $y$ is called a {\it non-singular point}, a {\it double point}, a {\it triple point}, or a {\it branch point}, respectively. A {\it surface-link diagram}, or simply a {\it diagram}, of a surface-link $\mathcal L=f(F)$ is the image $p(\mathcal L)$ equipped with ``over/under" information on the multiple points with respect to the direction of the projection $p$. 

\begin{figure}[ht]
\begin{center}
\resizebox{0.8\textwidth}{!}{%
  \includegraphics{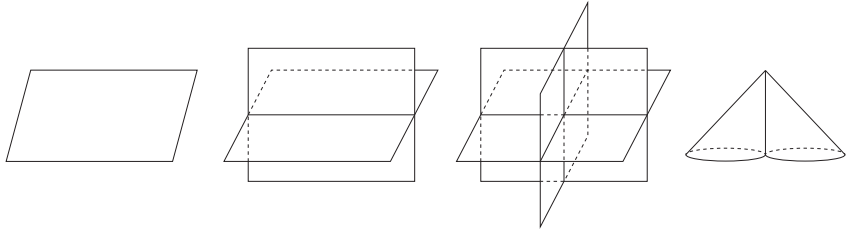}}
\caption{The images of $p(f(F))\cap N(y)$ under $\psi$}\label{fig-gen}
\end{center}
\end{figure}

A common way to indicate over/under information on a diagram is as follows. Along the double point curves on $p(\mathcal L)$, one of the sheets (called the {\it over-sheet}) lies farther than the other (called the {\it under-sheet}) with respect to the projection direction. The under-sheets are coherently broken in the projection image, and such a broken surface is called a {\it broken surface diagram} of $\mathcal L$. We call the connected components of over-sheets and under-sheets coherently broken along the double point curves in the projection {\it semi-sheets}. Consequently any surface-link gives rise to a broken surface diagram. On the other hand, for a given broken surface diagram, an embedding of a closed surface in $\mathbb R^4$ can be constructed. 
When the surface-link is oriented, we take normal vectors $\vec{n}$ to the projection of the surface such that the triple $(\vec{v}_1,\vec{v}_2,\vec{n})$ matches the orientation of $\mathbb R^3$, where $(\vec{v}_1,\vec{v}_2)$ defines the orientation of the surface. Such normal vectors are defined on the projection at all points other than the isolated branch points. For more details, see \cite{CS1}.

\begin{figure}[ht]
\begin{center}
\resizebox{0.8\textwidth}{!}{%
  \includegraphics{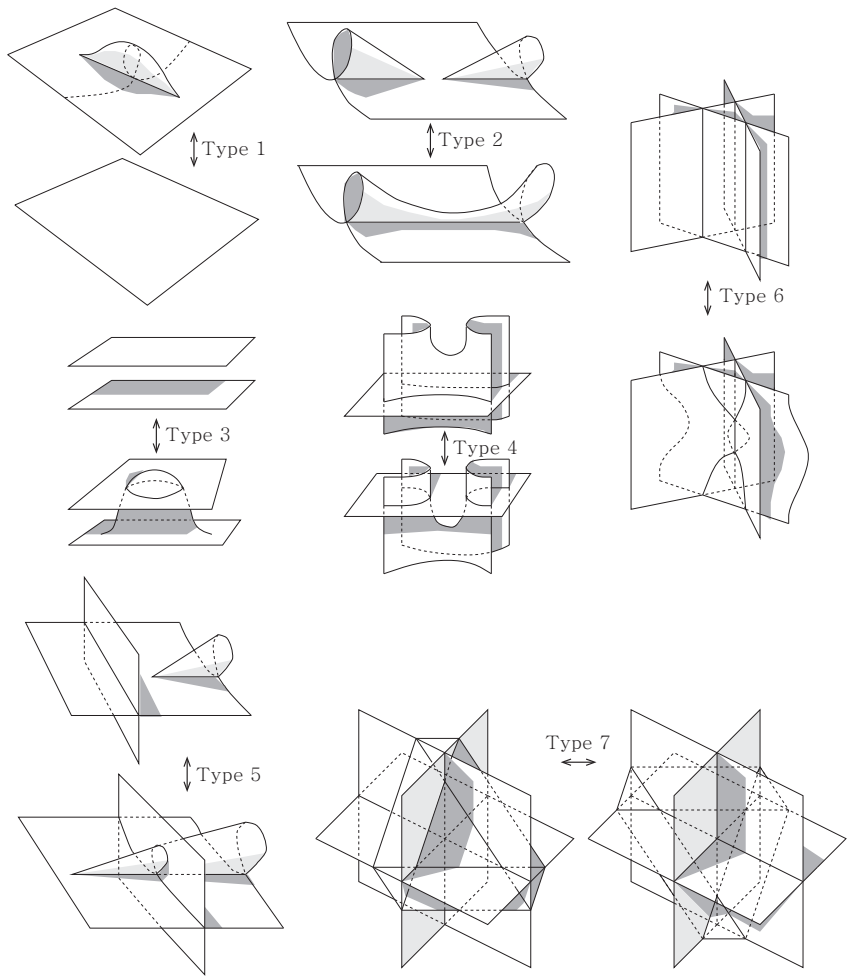}}
\caption{Roseman moves on broken surface diagrams}\label{fig-rose}
\end{center}
\end{figure}

 In \cite{Ro}, D. Roseman introduced local moves of seven types on broken surface diagrams called {\it Roseman moves} as depicted in Fig.~\ref{fig-rose}, where over/under information is omitted for simplicity. 
 
\begin{theorem}[\cite{Ro}]\label{thm-RosM}
Let $\mathcal L$ and $\mathcal L'$ be two surface-links and let $D$ and $D'$ be broken surface diagrams of $\mathcal L$ and $\mathcal L'$, respectively. Then $\mathcal L$ and $\mathcal L'$ are equivalent if and only if $D$ and $D'$ are transformed into each other by a finite sequence of ambient isotopies of $\mathbb R^3$ and Roseman moves.
 \end{theorem}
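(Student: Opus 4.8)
The plan is to mimic, one dimension up, the standard proof of Reidemeister's theorem, using general position for the one-parameter family of projections; I will prove the two implications separately.

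\textbf{Sufficiency.} Suppose $D$ and $D'$ differ by finitely many ambient isotopies of $\mathbb{R}^3$ and Roseman moves; it is enough to realize each elementary change by an ambient isotopy of $\mathbb{R}^4$. An ambient isotopy $\{g_t\}$ of $\mathbb{R}^3$ lifts to $\{g_t \times \mathrm{id}_{\mathbb{R}}\}$ on $\mathbb{R}^4 = \mathbb{R}^3 \times \mathbb{R}$, carrying the surface-link presented by $D$ to the one presented by $g_1(D)$. A Roseman move is supported in a small ball $B \subset \mathbb{R}^3$; reading the $\mathbb{R}$-coordinate off the over/under data one obtains embedded pieces of surface in $B \times \mathbb{R}$ before and after the move, and a direct local construction gives an ambient isotopy of $B\times\mathbb{R}$, fixing the boundary, between them. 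Concatenating, $\mathcal{L}$ and $\mathcal{L}'$ are equivalent.

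\textbf{Necessity.} Suppose $\mathcal{L}$ and $\mathcal{L}'$ are equivalent and fix an ambient isotopy $\{h_t\}_{t\in[0,1]}$ of $\mathbb{R}^4$ with $h_0 = \mathrm{id}$ and $h_1(\mathcal{L}) = \mathcal{L}'$, which we may take to be the identity outside a large ball and constant for $t$ near $0$ and near $1$, where the diagrams are $D$ and $D'$. Write $f_t = p\circ h_t|_F : F \to \mathbb{R}^3$ and let $\rho_t$ be the $\mathbb{R}$-coordinate of $h_t|_F$, so that the pair $(f_t,\rho_t)$ encodes the broken surface diagram at time $t$. By Thom--Mather multi-jet transversality applied to the family (perturbing $\{h_t\}$ rel $t=0,1$), we may assume the path $t\mapsto (f_t,\rho_t)$ is generic: there are parameter values $0<t_1<\dots<t_n<1$ outside of which $(f_t,\rho_t)$ is a stable broken surface diagram, on each complementary subinterval the diagrams are related by an ambient isotopy of $\mathbb{R}^3$, and at each $t_i$ the path crosses the discriminant transversally along a codimension-one stratum, producing a change localized in a small ball.

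\textbf{Identifying the transitions --- the main obstacle.} It then remains to show that each such codimension-one crossing is, up to an ambient isotopy of $\mathbb{R}^3$, one of the seven Roseman moves of Fig.~\ref{fig-rose} or its inverse. This reduces to the classification, up to codimension one, of the local and multi-local singularities of a generic map $F^2 \to \mathbb{R}^3$ together with the behaviour of the height function $\rho_t$ along the singular strata: birth and death of a pair of branch points, a cusp appearing on a double curve, a double curve becoming tangent to a sheet or to another branch of the double set, a branch point crossing a sheet or a double curve, and the triple- and quadruple-point transitions, each completed with its admissible over/under data. Verifying that this finite list is complete and coincides exactly with Fig.~\ref{fig-rose} is the delicate step; it rests on the classical stratification theory of stable maps of surfaces into $3$-space (Whitney, Thom, Mather) supplemented by Roseman's bookkeeping of the over/under information, but introduces no new conceptual ingredient. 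Splicing together the local moves and ambient isotopies obtained at $t_1,\dots,t_n$ yields the desired sequence transforming $D$ into $D'$.
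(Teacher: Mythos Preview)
The paper does not prove this theorem at all: it is stated with the citation \cite{Ro} and used as a black box, so there is no ``paper's own proof'' to compare against. Your outline is essentially the strategy Roseman himself uses --- sufficiency by locally realizing each move as an isotopy, necessity by putting a one-parameter family of projections in general position and classifying the codimension-one degenerations --- so in that sense you are reproducing the approach of the original source rather than diverging from it.

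That said, as written your argument is a sketch rather than a proof. The entire content of Roseman's theorem lies in the step you label ``the main obstacle'': the exhaustive classification of codimension-one strata in the space of (map, height) pairs and the verification that each stratum, with every admissible over/under pattern, yields exactly one of the seven moves in Fig.~\ref{fig-rose}. You invoke Whitney--Thom--Mather stratification theory and ``Roseman's bookkeeping'' but do not carry out the enumeration; in particular, your informal list of transitions (``a cusp appearing on a double curve'', ``a branch point crossing \dots\ a double curve'') does not line up cleanly with the seven moves and contains items that are not codimension-one events for generic maps of surfaces to $\mathbb{R}^3$. If the goal were to supply a self-contained proof, this gap would need to be closed; for the purposes of the present paper, simply citing \cite{Ro} is what the authors do, and that is adequate.
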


Now we review marked graph presentation of surface-links. A {\it marked graph} is a spatial graph $G$ in $\mathbb R^3$ satisfying that $G$ is a finite graph with $4$-valent vertices, say $v_1, v_2,\ldots, v_n$; each $v_i$ is a rigid vertex, that is, we fix a rectangular neighborhood $N_i$ homeomorphic to $\{(x, y)|-1 \leq x, y \leq 1\},$ where the vertex $v_i$ corresponds to the origin and the edges incident to $v_i$ are represented by $x^2 = y^2$; each $v_i$ has a {\it marker}, which is the line segment on $N_i$ represented by $\{(x, 0)|-1 \leq x \leq 1\}$. In this paper, a classical link in $\mathbb R^3$ is regarded as a marked graph without marked vertices. An {\it orientation} of a marked graph $G$ is a choice of an orientation for each edge of $G$ such that every vertex in $G$ looks like 
\xy (-5,5);(5,-5) **@{-}, 
(5,5);(-5,-5) **@{-}, 
(3,3.2)*{\llcorner}, 
(-3,-3.4)*{\urcorner}, 
(-2.5,2)*{\ulcorner},
(2.5,-2.4)*{\lrcorner}, 
(3,-0.2);(-3,-0.2) **@{-},
(3,0);(-3,0) **@{-}, 
(3,0.2);(-3,0.2) **@{-}, 
\endxy or 
\xy (-5,5);(5,-5) **@{-}, 
(5,5);(-5,-5) **@{-},  
(2.5,2)*{\urcorner}, 
(-2.5,-2.2)*{\llcorner}, 
(-3.2,3)*{\lrcorner},
(3,-3.4)*{\ulcorner},
(3,-0.2);(-3,-0.2) **@{-},
(3,0);(-3,0) **@{-}, 
(3,0.2);(-3,0.2) **@{-}, 
\endxy.  
A marked graph $G$ is said to be 
{\it orientable} if it admits an orientation. Otherwise, it is said to be {\it non-orientable}. As usual, a marked graph $G$ in $\mathbb R^3$ is described by a diagram (called a {\it marked graph diagram}) $D$ in $\mathbb R^2$ which is a generic projection on $\mathbb R^2$ with over/under crossing information for each double point  such that the restriction to a rectangular neighborhood of each marked vertex is an embedding. Fig.~\ref{fig-nori-mg} shows an oriented marked graph diagram and a non-orientable marked graph diagram. Throughout this paper, a marked graph (diagram) means an oriented marked graph (diagram), unless otherwise stated.

\begin{figure}[ht]
\begin{center}
\resizebox{0.5\textwidth}{!}{%
  \includegraphics{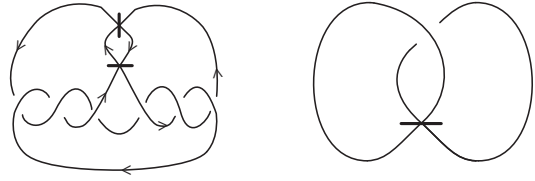}}
\caption{Marked graph diagrams}\label{fig-nori-mg}
\end{center}
\end{figure}

A surface-link $\mathcal L$ in $\mathbb R^4$ can be described in terms of its {\it cross-sections} $\mathcal L_t=\mathcal L \cap (\mathbb R^3\times \{t\})$,  $ t \in \mathbb R$ (motion pictures).  Let $p:\mathbb R^4 \to \mathbb R$ be the projection given by $p(x_1, x_2, x_3, x_4)=x_4$, and we denote by 
 $p_{\mathcal L} : \mathcal L \to  \mathbb R$ the restriction of $p$ to ${\mathcal L}$.  
 It  is known (\cite{KSS},\cite{Lo}) that any surface-link $\mathcal L$ is equivalent to a surface-link $\mathcal L'$, called a {\it hyperbolic splitting} of $\mathcal L$,
such that
the projection $p_{\mathcal L'}: \mathcal L' \to \mathbb R$ satisfies that 
all critical points are non-degenerate, 
all the index 0 critical points (minimal points) are in $\mathbb R^3\times \{-1\}$, 
all the index 1 critical points (saddle points) are in $\mathbb R^3\times \{0\}$, and 
all the index 2 critical points (maximal points) are in $\mathbb R^3\times \{1\}$.

Let $\mathcal L$ be a surface-link and let ${\mathcal L'}$ be a hyperbolic splitting of $\mathcal L.$ The cross-section $\mathcal L'_0=\mathcal L'\cap (\mathbb R^3 \times \{0\})$ at $t=0$ is a $4$-valent graph $G$ in $\mathbb R^3\times \{0\}$. We give a marker at each $4$-valent vertex (saddle point) that indicates how the saddle point opens up above as illustrated in Fig.~\ref{sleesan2:fig1}. We assume that the cross-section
$\mathcal L'_0$ has the induced orientation as the boundary of the oriented surface $\mathcal L' \cap (\mathbb R^3 \times (-\infty, 0])$. The resulting oriented marked graph $G$ is called a {\it marked graph} presenting $\mathcal L$. A diagram of $G$ is called a {\it marked graph diagram} presenting $\mathcal L$.  

\begin{figure}[ht]
\begin{center}
\resizebox{0.60\textwidth}{!}{%
  \includegraphics{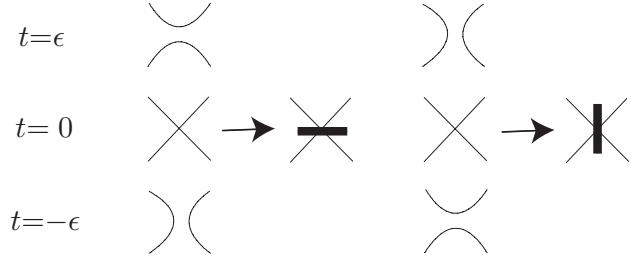} }
\caption{Marking of a vertex} \label{sleesan2:fig1}
\end{center}
\end{figure}

Let $D$ be a marked graph diagram and $D_0$ the singular link diagram obtained from $D$ by removing all markers. Let $V(D)=\{v_1,v_2,\ldots,v_n\}$ be the set of all vertices of $D$.  For each $i$ $(i=1,\ldots, n),$ consider four points $v_i^1,v_i^2,v_i^3$, and $v_i^4$ on $D$ in a neighborhood of $v_i$ as in Fig.~\ref{fig-ver}. We define
 \begin{align*}
D_+=\bigl[D_0\setminus \overset{n}{\underset{i=1}{\cup}}\bigl(\overset{4}{\underset{j=1}{\cup}}|v_i,v_i^j|\bigr)\bigr]\cup\bigl[\overset{n}{\underset{i=1}{\cup}}\bigl(|v_i^1,v_i^2|\cup|v_i^3,v_i^4|\bigr)\bigr],
\end{align*}
which is called the {\it positive resolution} of $D,$ and
 \begin{align*}
D_-=\bigl[D_0\setminus \overset{n}{\underset{i=1}{\cup}}\bigl(\overset{4}{\underset{j=1}{\cup}}|v_i,v_i^j|\bigr)\bigr]\cup\bigl[\overset{n}{\underset{i=1}{\cup}}\bigl(|v_i^1,v_i^3|\cup|v_i^2,v_i^4|\bigr)\bigr],
\end{align*}
the {\it negative resolution} of $D$, where $|v,w|$ is the line segment connecting $v$ and $w$.  
When both resolutions $D_-$ and $D_+$ are diagrams of trivial links, we say that 
$D$ is  {\it admissible}. 

\begin{figure}[ht]
\begin{center}
\resizebox{0.55\textwidth}{!}{%
  \includegraphics{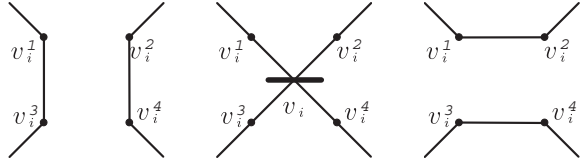} }
\caption{$v_i^1,v_i^2,v_i^3$, and $v_i^4$}\label{fig-ver}
\end{center}
\end{figure}

When $D$ is admissible, we construct a surface-link as follows (cf.  \cite{KSS},\cite{Yo}).
Let $L_0$ be a spatial graph in $\mathbb R^3$ whose diagram is $D_0$.  Let $w_i^j$ and $w_i$ be the points on $L_0$ such that $\pi(w_i^j)=v_i^j$ and $\pi(w_i)=v_i,$ respectively, where $\pi:\mathbb R^3\rightarrow \mathbb R^2$ is the projection $(x,y,z) \mapsto (x,y)$. 
For each $t \in [0,1]$, let $w_i^j(t)$ be the point $ (1-t) w_i + t w_i^j \in \mathbb{R}^3$.  

For each $t \in [0,1]$, let 
$L_t^{+}$ and $L_t^{-}$ be links in $\mathbb R^3$ defined by
$$L_t^{+} = \bigl[ L_0 \setminus \overset{n}{\underset{i=1}{\cup}} \bigl( \overset{4}{\underset{j=1}{\cup}} 
| w_i, w_i^j(t)  |   \bigr) \bigr]   \cup \bigl[ \overset{n}{\underset{i=1}{\cup}} \bigl( 
| w_i^1(t), w_i^2(t) | \cup
| w_i^3(t), w_i^4(t) | \bigr)\bigr],$$ 
$$L_{t}^{-} = \bigl[ L_0 \setminus \overset{n}{\underset{i=1}{\cup}} \bigl(\overset{4}{\underset{j=1}{\cup}}
 | w_i, w_i^j(t) | \bigr)\bigr] 
\cup \bigl[ \overset{n}{\underset{i=1}{\cup}} \bigl( 
| w_i^1(t), w_i^3(t) | \cup 
| w_i^2(t), w_i^4(t) | \bigr)\bigr]. $$ 
 
Put $L_+ = L_1^{+}$ and $L_- = L_{1}^{-}$.  Then $L_+$ and $L_-$ have diagrams $D_+$ and $D_-$, respectively.  
Let $B_1^{+},\ldots,B_\mu^+$ be mutually disjoint 2-disks in $\mathbb{R}^3$ with 
$\partial(B_1^{+}\cup\cdots\cup B_\mu^+)=L_+$, and let 
$B_1^{-},\ldots,B_\lambda^-$ be mutually disjoint 2-disks  in $\mathbb{R}^3$ with 
$\partial(B_1^{-}\cup\cdots\cup B_\lambda^-)=L_-$.  
Let $F(D)$ be a surface-link in $\mathbb{R}^4$ defined by
\begin{align*}
F(D) = \/ 
&(B_1^{-}\cup\cdots\cup B_\lambda^-)\times\{-2\} 
\cup L_- \times(-2,-1)\\
&\cup (\cup_{t\in[-1,0)}L_{-t}^-\times\{t\})
\cup L_0 \times\{0\} 
\cup (\cup_{t\in(0,1]}L_t^+\times\{t\})\\
&\cup L_+ \times(1,2) 
\cup (B_1^{+}\cup\cdots\cup B_\mu^+)\times \{2\}. 
\end{align*}

We say that $F(D)$ is a {\it surface-link associated with $D$}.  It is uniquely determined from $D$ up to equivalence (see \cite{KSS}).
A surface-link $\mathcal L$ is said to be {\it presented} by a marked graph diagram $D$ if $\mathcal L$ is equivalent to the  surface-link $F(D)$. Any surface-link can be presented by an admissible marked graph diagram. Moreover, we have 

\begin{theorem}[\cite{KK},\cite{KJL2},\cite{Sw}]\label{thm-YosM}
Let $\mathcal L$ and $\mathcal L'$ be two surface-links and let $D$ and $D'$ be marked graph diagrams presenting $\mathcal L$ and $\mathcal L'$, respectively. Then $\mathcal L$ and $\mathcal L'$ are equivalent if and only if $D$ and $D'$ are transformed into each other by a finite sequence of ambient isotopies of $\mathbb R^2$ and Yoshikawa moves.
 \end{theorem}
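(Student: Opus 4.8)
The plan is to prove the two implications separately. The ``if'' direction — that a Yoshikawa move does not change the associated surface-link up to equivalence — is the routine one, while the ``only if'' direction — that equivalent surface-links are presented by Yoshikawa-equivalent diagrams — carries all the weight. Throughout I will use the explicit construction $D\mapsto F(D)$ recalled above, together with the fact, cited from \cite{KSS}, that $F(D)$ is well defined up to equivalence independently of the auxiliary choices of bounding disks $B_i^\pm$.

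For the ``if'' direction I would run through the list of Yoshikawa moves one type at a time and, for each, exhibit an ambient isotopy of $\mathbb R^4$ carrying $F(D)$ to $F(D')$. The Reidemeister-type moves modify $D$ only away from the marked vertices, so the corresponding change of the spatial graph $L_0$ at level $0$ is supported in a ball; it extends to an isotopy of the whole family $\{L_t^\pm\}$ and hence of $F(D)$. For the moves that involve marked vertices one reads off directly from the definition of $F(D)$ that the two local pictures build ambient isotopic pieces of surface: sliding an edge across a marker is an isotopy of a band, and the move that creates or cancels a pair of marked vertices corresponds to cancelling a $1$-handle against a $0$- or $2$-handle (a finger/Whitney-type move), which can be realized by local Roseman moves; Theorem~\ref{thm-RosM} can be invoked to package several such steps if convenient.

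For the ``only if'' direction I would pass to Morse theory on the height function $p$. Given an orientation-preserving diffeomorphism $h$ with $h(\mathcal L)=\mathcal L'$, put $\mathcal L$ and $\mathcal L'$ into hyperbolic splitting and join them by a generic path $\{\mathcal L^{(s)}\}_{s\in[0,1]}$ of embedded closed surfaces along which, for all but finitely many $s$, the restriction of $p$ is Morse with critical values lying in three separated bands, so that the marked graph diagram read off from the level-$0$ slice $\mathcal L^{(s)}\cap(\mathbb R^3\times\{0\})$ (with markers recording how the saddles open upward) is defined. For generic $s$ this slice varies by a planar isotopy, which supplies the ambient-isotopy-of-$\mathbb R^2$ part of the conclusion. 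At the finitely many exceptional values a codimension-one degeneration occurs — two critical points exchange heights, a cancelling pair of critical points is born or dies, a saddle point passes through the level-$0$ plane, or a Reidemeister-type degeneration of the slice occurs — and the heart of the argument is to show that each such elementary event alters the level-$0$ marked graph diagram by a finite sequence of Yoshikawa moves: Reidemeister degenerations give the Reidemeister-type moves, saddle–saddle height exchanges and a saddle crossing level $0$ give the marker moves and their variants, and the birth/death of a cancelling pair near level $0$ gives the pair-creation move.

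The main obstacle is exactly this last step: the \emph{completeness} assertion that the listed Yoshikawa moves exhaust all the relations. Establishing it requires enumerating the local models of the exceptional events and checking, case by case, that the induced change on the middle cross-section — crucially including the placement of the markers, which encode the opening direction of each saddle — stays within the given move set, with no genuinely new local move forced. An alternative route to the same conclusion is to bootstrap from Roseman's theorem (Theorem~\ref{thm-RosM}): isotope a broken surface diagram of $\mathcal L$ into a position adapted to $p$ and track the effect of each Roseman move on the level-$0$ slice; but since Roseman moves are supported near triple and branch points that lie off the middle level, reducing their effect on that slice to Yoshikawa moves is itself a delicate piece of bookkeeping. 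Either way, once the elementary-event analysis is complete, concatenating the resulting finite sequences of Yoshikawa moves and planar isotopies over all exceptional values of $s$ produces a transformation of $D$ into $D'$ of the required form, which finishes the proof.
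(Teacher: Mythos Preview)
The paper does not prove this theorem: it is stated with citations \cite{KK}, \cite{KJL2}, \cite{Sw} and no proof is given in the text, so there is nothing in the paper to compare your proposal against. Your sketch is a reasonable outline of the standard strategy found in those references --- invariance of $F(D)$ under each Yoshikawa move for the ``if'' direction, and a Cerf-theoretic analysis of a generic one-parameter family of hyperbolic splittings for the ``only if'' direction --- but as you yourself flag, the substance lies entirely in the case-by-case enumeration of codimension-one degenerations and the verification that each is covered by the listed moves, and your proposal stops short of carrying that out. As written it is a plan rather than a proof; to complete it you would need to reproduce the detailed local analysis of Swenton \cite{Sw} or Kearton--Kurlin \cite{KK} (and the generating-set refinement in \cite{KJL2}).
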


In \cite{As}, S. Ashihara  introduced a method of constructing a broken surface diagram of a surface-link from its marked graph diagram. For our use below, we review his construction. In what follows, by $D\rightarrow D'$ we mean that a link diagram $D'$ is obtained from a  link diagram $D$ by a single Reidemeister move $R_i(1\leq i\leq 3)$ shown in Fig.~\ref{fig-rmove} or an ambient isotopy of $\mathbb R^2$.

\begin{figure}[ht]
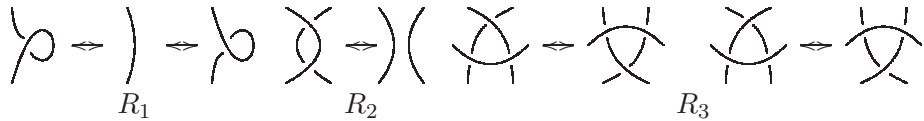

\centerline{
\xy (0,0);(4,7) **\crv{(2,7)}, (4,3);(4,7)
**\crv{(6,3)&(6,7)}, (0,10);(1.5,6) **\crv{(0.2,6.5)},
(2.5,4);(4,3) **\crv{(3,3.1)},
\endxy
\xy (2,5);(6,5) **@{-} ?>*\dir{>} ?<*\dir{<},(0,0)*{},
\endxy
\hskip 0.2cm
\xy (0,0);(0,10) **\crv{(2,5)},
(1,-3) *{R_1},
\endxy
\xy (2,5);(6,5) **@{-} ?>*\dir{>} ?<*\dir{<},(0,0)*{},
\endxy
\hskip 0.2cm
\xy (0,10);(4,3) **\crv{(2,3)}, (4,7);(4,3)
**\crv{(6,7)&(6,3)}, (0,0);(1.5,4) **\crv{(0.2,3.5)},
(2.5,6);(4,7) **\crv{(3,6.9)},
\endxy
\quad
\xy (0,0);(0,10) **\crv{(9,5)}, (6,10);(3.8,8.5) **\crv{(5,9.5)},
(6,0);(3.8,1.5) **\crv{(5,0.5)}, (2.3,2.8);(2.3,7.2)
**\crv{(0.5,5)},
\endxy
\xy (2,5);(6,5) **@{-} ?>*\dir{>} ?<*\dir{<},
(4,-3) *{R_2},(0,0)*{},
\endxy
\xy (0,0);(0,10) **\crv{(4,5)}, 
(6,0);(6,10) **\crv{(2,5)},
\endxy
\quad
\xy (0,5);(10,5) **\crv{(5,0)}, (7.5,4);(2,10) **\crv{(6.7,8)},
(2,0);(2.2,2.2) **\crv{(2,1)}, (8,0);(7.8,2.2) **\crv{(8,1)},
(2.5,4);(4.4,7.7) **\crv{(3,6.5)}, (5.8,8.8);(8,10) **\crv{(6,9)},
\endxy
\xy (2,5);(6,5) **@{-} ?>*\dir{>} ?<*\dir{<},
,(0,0)*{},
\endxy
\hskip 0.2cm
\xy (0,5);(10,5) **\crv{(5,10)}, (2.5,6);(8,0) **\crv{(3.3,2)},
(2,10);(2.2,7.8) **\crv{(2,9)}, (8,10);(7.8,7.8) **\crv{(8,9)},
(7.5,6);(5.6,2.3) **\crv{(7,3.5)}, (4.2,1.2);(2,0) **\crv{(4,1)},
\endxy
\xy (4,-3) *{R_{3}},(0,0)*{},
\endxy
\xy (0,5);(10,5) **\crv{(5,0)}, (2.5,4);(8,10) **\crv{(3.3,8)},
(8,0);(7.8,2.2) **\crv{(8,1)}, (2,0);(2.2,2.2) **\crv{(2,1)},
(7.5,4);(5.6,7.7) **\crv{(7,6.5)}, (4.2,8.8);(2,10) **\crv{(4,9)},
\endxy
\xy (2,5);(6,5) **@{-} ?>*\dir{>} ?<*\dir{<},
,(0,0)*{},
\endxy
\hskip 0.2cm
\xy (0,5);(10,5) **\crv{(5,10)}, (7.5,6);(2,0) **\crv{(6.7,2)},
(8,10);(7.8,7.8) **\crv{(8,9)}, (2,10);(2.2,7.8) **\crv{(2,9)},
(2.5,6);(4.4,2.3) **\crv{(3,3.5)}, (5.8,1.2);(8,0) **\crv{(6,1)},
\endxy
}\caption{Reidemeister moves}
\label{fig-rmove}
\end{figure}
  
Let $D$ be an admissible marked graph diagram and let $D_+$ and $D_-$ be the positive and the negative resolutions of $D,$ respectively.  
Since $D_+$ is a diagram of a trivial link, there is a sequence of link diagrams from $D_+$ to a trivial link diagram $O$ related by ambient isotopies of $\mathbb R^2$ and Reidemeister moves: 
$$D_+=D_1\rightarrow D_2\rightarrow\cdots\rightarrow D_r=O.$$ 
For each $i$ ($i=1, \ldots, r-1$), let $\{f^{(i)}_t\}_{t\in I}$ be a $1$-parameter  family of homeomorphisms from $\mathbb{R}^3$ to $\mathbb{R}^3$ that satisfies 
$$f_0^{(i)}={\rm id}, \quad f_1^{(i)}(L(D_i))=L(D_{i+1}),$$ 
where $L(D_i)$ denotes a link in $\mathbb{R}^3$ whose diagram is $D_i$ ($i=1, \ldots, r$). 
Without loss of generality, we may assume that $L(D_1)= L(D_+)= L_+$ and the  
following two conditions are satisfied.  
\begin{itemize}
\item[$\bullet$] When the move $D_i\rightarrow D_{i+1}$ is an ambient isotopy of $\mathbb R^2$, let $\{h^{(i)}_t\}_{t\in I}$ be  an ambient isotopy of $\mathbb R^2$ such that $h_1^{(i)}(D_i)=D_{i+1}.$ Then $f^{(i)}_t$ satisfies $\pi(f_t^{(i)}(L(D_i)))=h_t^{(i)}(\pi(L(D_i)))=h_t^{(i)}(D_i)$ for $t\in I.$ 
\item[$\bullet$] When the move $D_i\rightarrow D_{i+1}$ is a Reiedemeister move, let $B_{(i)}$ be a disk in $\mathbb R^2$ where the move is applied and let $M_{(i)}$ be the subset of $B_{(i)}\times I$ $\subset \mathbb R^3$ determined by $\pi(M_{(i)}\cap (B_{(i)}\times \{t\}))=\pi(f_t^{(i)}(L(D_i)))\cap B_{(i)}$ for $t\in I.$ 
Then $M_{(i)}$ is as shown in Fig.~\ref{fig-m1}, \ref{fig-m2}, or \ref{fig-m3}.
\end{itemize} 
 \begin{figure}[ht]
\begin{center}
\resizebox{0.5\textwidth}{!}{%
  \includegraphics{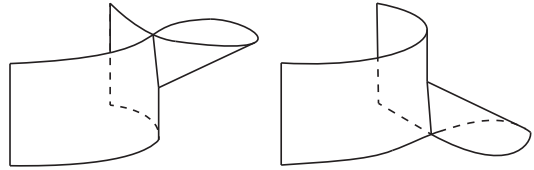} }
\caption{$M_{(i)}$ for $R_1$}
\label{fig-m1}
\end{center}
\end{figure}
 \begin{figure}[ht]
\begin{center}
\resizebox{0.5\textwidth}{!}{%
  \includegraphics{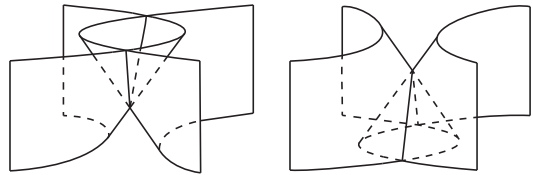} }
\caption{$M_{(i)}$ for $R_2$}
\label{fig-m2}
\end{center}
\end{figure}
 \begin{figure}[ht]
\begin{center}
\resizebox{0.5\textwidth}{!}{%
  \includegraphics{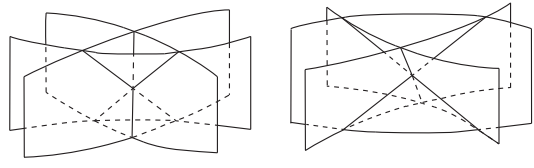} }
\caption{$M_{(i)}$ for $R_3$}
\label{fig-m3}
\end{center}
\end{figure}
Take real numbers $t_1, \ldots, t_r$ with $1 < t_1< \cdots<t_r<2.$ For each $i$ ($i=1, \ldots, r-1$), we define a homeomorphism $F^{(i)}:\mathbb{R}^4 (=\mathbb{R}^3 \times \mathbb{R}) 
\rightarrow\mathbb{R}^4$ by 
\begin{align*}
F^{(i)}(x,t)=\begin{cases}
(x,t) \hskip 2cm \text{for }t\leq t_i;\\
(f_{\phi(t)}^{(i)}(x),t) \hskip 1cm \text{for }t_i<t< t_{i+1};\\
(f_{1}^{(i)}(x),t) \hskip 1.15cm \text{for }t\geq t_{i+1}
\end{cases}
\end{align*}
for all $x\in \mathbb R^3$ and $t\in \mathbb R,$ where $\phi(t)=(t-t_i)/(t_{i+1}-t_i).$

Similarly, consider a sequence of link diagrams from $D_-$ to a trivial link diagram $O'$ related by ambient isotopies of $\mathbb R^2$ and Reidemeister moves:
$$D_-=D_1' \rightarrow D_2' \rightarrow\cdots\rightarrow D_s'=O'.$$
For each $j$ ($j=1, \ldots,s-1$), let $\{g^{(j)}_t\}_{t\in I}$ be a $1$-parameter family of homeomorphisms from $\mathbb{R}^3$ which satisfies $$ g_0^{(j)}={\rm id}, \quad g_1^{(j)}(L(D_j'))=L(D_{j+1}').$$  
Without loss of generality, we may assume that $L(D_1')=L(D_-)=L_-$ and 
the following two conditions are satisfied.  
\begin{itemize}
\item[$\bullet$] When the move $D_j'\rightarrow D_{j+1}'$ is an ambient isotopy of $\mathbb R^2$, let $\{{h'_t}^{(j)}\}_{t\in I}$ be  an ambient isotopy of $\mathbb R^2$ such that ${h'_1}^{(j)}(D_j')=D_{j+1}'.$ Then $g^{(j)}_t$ satisfies $\pi(g_t^{(j)}(L(D_j')))={h'_t}^{(j)}(\pi(L(D_j')))$ for $t\in I.$
\item[$\bullet$] When the move $D_j'\rightarrow D_{j+1}'$ is a  Reidemeister move, let $B_{(j)}'$ be a disk in $\mathbb R^2$ where the move is applied and let $M_{(j)}'$ be the subset of $B_{(j)}'\times I$ $\subset \mathbb R^3$ determined by $\pi(M_{(j)}'\cap (B_{(j)}'\times \{t\}))=\pi(g_t^{(j)}(L(D_j')))\cap B_{(j)}'$ for $t\in I.$ Then $M_{(j)}'$ is as shown in Fig.~\ref{fig-m1}, \ref{fig-m2}, or \ref{fig-m3}.
\end{itemize} 
Take real numbers $t_1', \ldots, t_s'$ with $-1 > t_1'> \cdots> t_s' > -2.$ For each $j$ ($j=1, \ldots, s-1$), we define a homeomorphism $G^{(j)}:\mathbb{R}^4(=\mathbb{R}^3 \times \mathbb{R})\rightarrow\mathbb{R}^4$ by 
\begin{align*}
G^{(j)}(x,t)=\begin{cases}
(x,t) \hskip 2cm (t\geq t_j'),\\
(g_{\psi(t)}^{(j)}(x),t) \hskip 1cm (t_{j+1}'<t< t_j'),\\
(g_{1}^{(j)}(x),t) \hskip 1.15cm (t\leq t_{j+1}'),
\end{cases}
\end{align*}
where $\psi(t)=(t_j'-t)/(t_j'-t_{j+1}').$
Let $$F'=(G^{(s-1)}\circ G^{(s-2)}\circ\cdots\circ G^{(1)}\circ F^{(r-1)}\circ F^{(r-2)}\circ\cdots\circ F^{(1)})(F(D)).$$
Then $F'$ is equivalent to $F(D)$.  

Let $B_1,\ldots,B_\mu$ be mutually disjoint 2-disks in $\mathbb{R}^3$ such that $\partial(B_1\cup\cdots\cup B_\mu)=L(O)$ and $\pi|_{B_1\cup\cdots\cup B_\mu}$ is an embedding. Let $B_1',\ldots,B_\lambda'$ be mutually disjoint 2-disks in $\mathbb{R}^3$ 
such that $\partial(B_1'\cup\cdots\cup B_\lambda')=L(O')$ and $\pi|_{B_1'\cup\cdots\cup B_\lambda'}$ is an embedding. Finally, we define $F$ to be the surface constructed as follows:
$$F=(B_1'\cup\cdots\cup B_\lambda')\times\{-2\} \cup  (F'\cap(\mathbb{R}^3\times(-2,2)) ) \cup (B_1\cup\cdots\cup B_\mu)\times\{2\}. $$
It is in general position with respect to the projection $q : \mathbb R^4 \to \mathbb R^3$ defined by $ (x,y,z,t) \mapsto (x,y,t)$. The broken surface diagram of $F$ obtained from  $q(F)$   is called a {\it broken surface diagram associated with the marked graph diagram $D$} and denoted by  $\mathcal{B}(D)$.


\section{Quandles and biquandles}
\label{sect-coh}

In this section we review the definitions and examples of quandles and biquandles. 

\begin{definition}[\cite{Jo},\cite{Ma}]
A {\it quandle} is a nonempty set $X$ with a binary operation $\ast:X\times X\rightarrow X$ satisfying that 
\begin{itemize}  
\item[(Q1)] For any $x\in X$, $x\ast x=x$.
\item[(Q2)] 
There exists a binary operation ${\ast}^{-1} :X\times X\rightarrow X$ such that for any $x, y\in X$,
$({x} \ast {y})~{\ast}^{-1}y=x$ and $({x} \ast^{-1} {y}) \ast y=x$. 
\item[(Q3)] For any $x,y,z\in X$, $(x\ast y)\ast z=(x\ast z)\ast (y\ast z)$.
\end{itemize}
A {\it rack} is a set with a binary operation that satisfies (Q2) and (Q3). 
\end{definition}

\begin{definition}[\cite{FeRoSa1},\cite{KaRa}]\label{defn-biqdle}
A {\it biquandle} is a nonempty set $X$ with two binary operations $\underline{\triangleright} : X\times X\rightarrow X~\text{and}~\overline{\triangleright}:X\times X\rightarrow X$ satisfying the following axioms:
\begin{itemize}  
\item[(B1)] For any $x\in X,$ $\lt{x}{x}=\ut{x}{x}$.
\item[(B2)] There exist two binary operations $\underline{\triangleright}^{-1}, \overline{\triangleright}^{-1}:X\times X\rightarrow X$ such that for any $x, y\in X,$ 
$(\lt{x}{y})~\underline{\triangleright}^{-1}y=x$, 
$(\ut{x}{y})~\overline{\triangleright}^{-1}y=x$, 
$\lt{(x~\underline{\triangleright}^{-1}y)}{y}=x$, and 
$\ut{(x~\overline{\triangleright}^{-1}y)}{y}=x$.
\item[(B3)] The map $H:X\times X\rightarrow X\times X$ defined by $(x,y)\mapsto (\ut{y} {x},\lt{x} {y})$ is invertible. 
\item[(B4)] For any $x,y,z\in X,$ 
\begin{align*}
&\lt{(\lt{x}{y})}{(\lt{z}{y})}=\lt{(\lt{x}{z})}{(\ut{y}{z})},\\
&\ut{(\lt{x}{y})}{(\lt{z}{y})}=\lt{(\ut{x}{z})}{(\ut{y}{z})},\\
&\ut{(\ut{x}{y})}{(\ut{z}{y})}=\ut{(\ut{x}{z})}{(\lt{y}{z})}.
\end{align*}
\end{itemize}
\end{definition}

A {\it birack} is a nonempty set $X$ with two binary operations $\underline{\triangleright}, \overline{\triangleright} : X\times X\rightarrow X$ that satisfies the axioms (B2), (B3) and (B4) above.

\begin{figure}[ht]
\begin{center}
\resizebox{0.85\textwidth}{!}{%
  \includegraphics{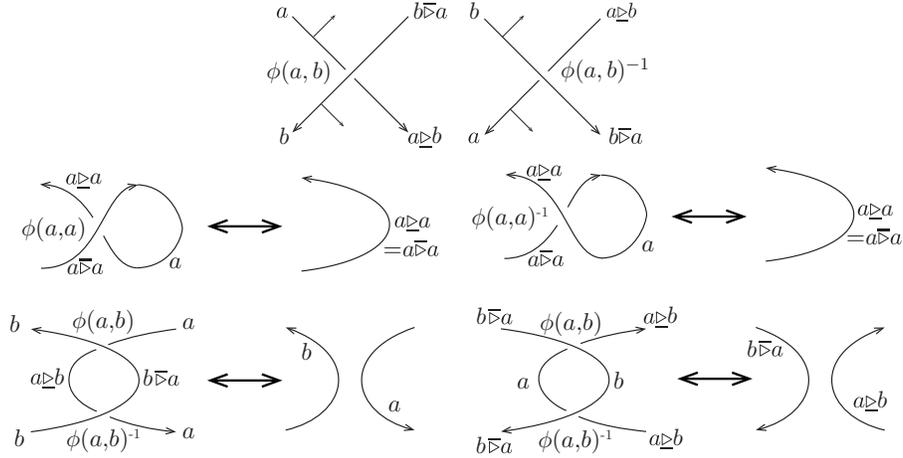}}
\caption{Reidemeister moves of Type I and II and biquandle axioms}\label{fig-clcol}
\end{center}
\end{figure}

\begin{figure}[ht]
\begin{center}
\resizebox{0.7\textwidth}{!}{%
  \includegraphics{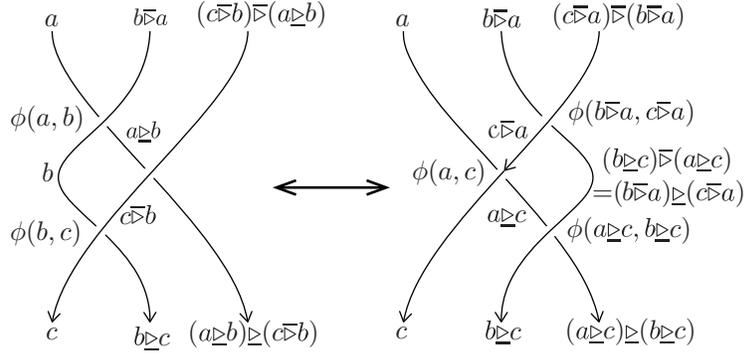}}
\caption{Reidemeister move of Type III and biquandle axioms}\label{fig-r12}
\end{center}
\end{figure}

We remind that the biquandle axioms come from the oriented Reidemeister moves. 
We divide a knot or link diagram $D$ at every crossing point (considered as a $4$-valence vertex) to obtain a collection of {\it semi-arcs}. We think of elements of a biquandle $X$ as labels for the semi-arcs in $D$ with different operations at positive and negative crossings as illustrated in the top of Fig.~\ref{fig-clcol}. The axioms are then transcriptions of a minimal set of oriented Reidemeister moves which are sufficient to generate any other oriented Reidemeister move (cf. \cite{Po}). The axiom (B1) comes from the Reidemeister move of type I as illustrated in Fig.~\ref{fig-clcol}. The axioms (B2) and (B3) come from the direct and reverse Reidemeister moves of type II respectively as illustrated in Fig.~\ref{fig-clcol}. The axiom (B4) comes from the oriented Reidemeister move of type III with all positive crossings as illustrated in Fig.~\ref{fig-r12}.

\begin{example}\label{exp-1}
Let $(X,\ast)$ be a quandle. Define $\lt{x}{y}=x\ast y$ and $\ut{x}{y}=x$ for all $x, y \in X$. Then $X$ is a biquandle. Any group $G$ is a biquandle with $\lt{x}{y}=y^{-n}xy$ and $\ut{x}{y}=x$ as well as with $\lt{x}{y}=x$ and $\ut{x}{y}=y^{-n}xy$, where $n \in \mathbb Z$.
\end{example}

A binary operation $*:X\times X\to X$ is {\it trivial} if $a*b=a$ for each $a,b\in X.$ A biquandle with trivial $\underline{\triangleright}$ (or $\overline{\triangleright}$) is really a quandle, which we call a {\it quandle biquandle}. Otherwise, it is called a {\it non-quandle biquandle}. 

\begin{example}\label{exp-2}
 Let $X$ be a module over a commutative ring $R$. Let $s$ and $t$ be invertible elements of $R$. Define
$$\lt{x}{y}=t(x-y)+s^{-1}y~\text{and}~\ut{x}{y}=s^{-1}x.$$ Then $X$ is a biquandle. In particular, any module over the two-variable Laurent polynomial ring $R=\mathbb Z[t^{\pm 1}, s^{\pm 1}]$ is a biquandle, which is called an {\it Alexander biquandle} (see \cite{KJL},\cite{KaRa}).
\end{example}

Let $X=\{x_1,x_2,\ldots,x_n\}$ be a finite biquandle. The {\it matrix} of $X$, denoted by $M_X$, is defined to be the block matrix:
$$M_X=\left[\begin{array}{c|c}
M^1&M^2
\end{array} \right],$$ where $M^1=(m^1_{ij})_{1\leq i,j\leq n}$ and $M^2=(m^2_{ij})_{1\leq i,j\leq n}$ are $n\times n$ matrices with entries in $X$ given by $$m^k_{ij}=\left\{
\begin{array}{ll}
 \lt {x_i}{x_j} & \hbox{for $k=1$,} \\
\ut {x_i}{x_j}& \hbox{for $k=2$.} 
\end{array}%
\right.$$ 

\begin{example}\label{exp-3}
The Alexander biquandle $X=\mathbb Z_4=\{0,1,2,3\}$ in Example \ref{exp-2} modulo $4$ with $s=3$ and $t=1$ is a non-quandle biquandle with the biquandle matrix:
\begin{equation*}
M_X=\left[\begin{array}{cccc|cccc} 
3& 1& 3& 1& 3& 3& 3& 3\\
0& 2& 0& 2& 2& 2& 2& 2\\
1& 3& 1& 3& 1& 1& 1& 1\\
2& 0& 2& 0& 0& 0& 0& 0
\end{array} \right]. 
\end{equation*}
\end{example}

Let $X$ and $Y$ be biquandles. A function $f:X\rightarrow Y$ is called a {\it biquandle homomorphism} if $f(\lt{x}{y})=\lt{f(x)}{f(y)}$ and $f(\ut{x}{y})=\ut{f(x)}{f(y)}$ for any $x,y\in X.$ We denote the set of all biquandle homomorphisms from $X$ to $Y$ by ${\rm Hom}(X,Y)$. A bijective biquandle homomorphism is called a {\it biquandle isomorphism}. Two biquandles $X$ and $Y$ are said to be {\it isomorphic} if there is a biquandle isomorphism $f: X \to Y$. In \cite{NeVo}, S. Nelson and J. Vo classified biquandles of order 2, 3, and 4 up to biquandle isomorphism. There are 2 biquandles of order 2, 10 biquandles of order 3, and 57 non-quandle biquandles of order 4.


\section{Biquandle colorings of diagrams}\label{sect-bqcs}

Let $D$ be a marked graph diagram and let $C(D)$ and $V(D)$ denote the set of all crossings and marked vertices of $D$, respectively. By a {\it semi-arc} of $D$ we mean  a connected component of $D \setminus (C(D) \cup V(D))$.  (At a crossing of $D$ the under-arcs are assumed to be cut.) Let $S(D)$ denote the set of semi-arcs of $D$. Since $D$ is oriented, we assume that it is co-oriented: The {\it co-orientation} of a semi-arc of $D$ satisfies that the pair (orientation, co-orientation) matches the (right-handed) orientation of $\mathbb R^2$.  
The co-orientation is also called the {\it normal} in this paper.  

Note that at a crossing, if the pair of the co-orientation of the over-arc and that of the under-arc matches the (right-handed) orientation of $\mathbb R^2$, then the crossing is positive; otherwise it is  negative. The crossing in (a) of Fig.~\ref{fig-col-0} is positive and that in (b) is negative. 

Among the four quadrants around a crossing $c$, the unique quadrant from which all co-orientations of the two arcs point outward is called the {\it source region} of $c$. The regions labeled by $R$ in (a) and (b) of Fig.~\ref{fig-col-0} are source regions.

\begin{definition}\label{defn-qc-1}
Let $X$ be a biquandle and let $D$ be a marked graph diagram. A {\it biquandle coloring} of $D$ by $X$, or simply {\it biquandle $X$-coloring} of $D$, is a map $\mathcal{C}: S(D) \rightarrow X$ satisfying the following three conditions:
\begin{itemize}
\item [(1)] For each positive crossing $c$, let $s_1, s_2, s_3$ and $s_4$ be the semi-arcs with co-orientations as shown in (a) of Fig.~\ref{fig-col-0}. Then 
$$\mathcal C(s_3)= \lt{\mathcal C(s_1)}{\mathcal C(s_2)}~\text{and}~\mathcal C(s_4)= \ut{\mathcal C(s_2)}{\mathcal C(s_1)}.$$  

\item [(2)] For each negative crossing $c$, let $s_1, s_2, s_3$ and $s_4$ be the semi-arcs with co-orientations as shown in (b) of Fig.~\ref{fig-col-0}. Then 
$$\mathcal C(s_3)= \ut{\mathcal C(s_1)}{\mathcal C(s_2)}~\text{and}~\mathcal C(s_4)= \lt{\mathcal C(s_2)}{\mathcal C(s_1)}.$$  

\item [(3)] For each marked vertex $v$, let $s_1, s_2, s_3$ and $s_4$ be the semi-arcs of $D$ as shown in (c) or (d) of Fig.~\ref{fig-col-0}. 
Then $\mathcal C(s_1)=\mathcal C(s_2)=\mathcal C(s_3)=\mathcal C(s_4)$. 

\begin{figure}[ht]
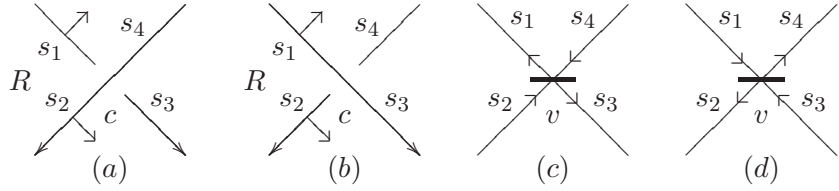

\centerline{
\xy (-10,10);(-2,2) **@{-},(10,-10);(2,-2) **@{-} **@{-} ?<*\dir{<}, 
(10,10);(-10,-10) **@{-} **@{-} ?>*\dir{>},  
(-5,-5);(-2,-8) **@{-},(-2.7,-7.4) *{\lrcorner}, 
(7,-3) *{s_3},
(-6,6);(-3,9) **@{-},(-3.7,7.7) *{\urcorner},
(-7,-3) *{s_2},(0,-5) *{c},
(0,-12)*{(a)}, (-8,3.8) *{s_1}, (3,7) *{s_4}, (-12,0) *{R},
\endxy 
 \qquad
\xy (10,-10);(-10,10) **@{-}  **@{-} ?<*\dir{<}, 
(10,10);(2,2) **@{-},
(-2,-2);(-10,-10) **@{-} **@{-} ?>*\dir{>},  
(-5,-5);(-2,-8) **@{-},(-2.7,-7.4) *{\lrcorner}, 
(7,-3) *{s_3}, (-8,3.8) *{s_1},
(-7,-3) *{s_2},(0,-5) *{c},
(0,-12)*{(b)}, (3,7) *{s_4}, (-12,0) *{R},
(-6,6);(-3,9) **@{-},(-3.7,7.7) *{\urcorner},
\endxy
\qquad
\xy (-10,10);(10,-10) **@{-}, 
(10,10);(-10,-10) **@{-}, 
(3,3.2)*{\llcorner}, 
(-3,-3.4)*{\urcorner}, 
(-2.5,2)*{\ulcorner},
(2.5,-2.4)*{\lrcorner}, 
(3,-0.2);(-3,-0.2) **@{-},
(3,0);(-3,0) **@{-}, 
(3,0.2);(-3,0.2) **@{-}, 
(7,-3) *{s_3},(-4,8) *{s_1},(-7,-3) *{s_2},(4,8) *{s_4},(0,-5) *{v},
(0,-12)*{(c)},
\endxy
 \qquad
\xy (-10,10);(10,-10) **@{-}, 
(10,10);(-10,-10) **@{-},  
(2.5,2)*{\urcorner}, 
(-2.5,-2.2)*{\llcorner}, 
(-3.2,3)*{\lrcorner},
(3,-3.4)*{\ulcorner},
(3,-0.2);(-3,-0.2) **@{-},
(3,0);(-3,0) **@{-}, 
(3,0.2);(-3,0.2) **@{-}, 
(7,-3) *{s_3},(-4,8) *{s_1},(-7,-3) *{s_2},(4,8) *{s_4},(0,-5) *{v},
(0,-12)*{(d)},
\endxy 
}
\vskip.1cm
\caption{Labels of semi-arcs}\label{fig-col-0}
\end{figure}
\end{itemize}
   
In cases (1) and (2), $s_1$ (or $s_2$) is called the {\it source semi-arc} and $s_3$ (or $s_4$) is called the {\it target semi-arc} at $c$. The biquandle element $\mathcal{C}(s_i)$ is called a {\it color} of the semi-arc $s_i$. We denote by ${\rm Col}^B_X(D)$ the set of all biquandle $X$-colorings of $D$. 
\end{definition}

Let $D$ be a marked graph diagram and let $\widehat D$ be the diagram obtained from $D$ by removing orientations of arcs and all crossings of $D$ as depicted in Fig.~\ref{fig-cr-remov}. (We do not remove marked vertices of $D$.) 
\begin{figure}
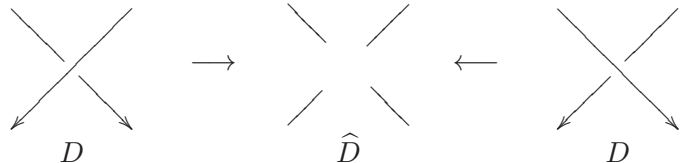

\centerline{ \xy (10,10);(26,26) **@{-} ?<*\dir{<},
(26,10);(19,17) **@{-}  ?<*\dir{<}, (17,19);(10,26) **@{-},
(18,7)*{D}, 
\endxy
 \qquad
 \xy (20.5,20.5);(26,26) **@{-},
 (10,10);(14.5,14.5) **@{-}, 
(26,10);(21,15) **@{-}, (15,21);(10,26) **@{-},
(0,18)*{\longrightarrow}, (35,18)*{\longleftarrow}, (18,7)*{\widehat D},
\endxy
\qquad
\xy (10,26);(26,10) **@{-} ?>*\dir{>}, 
(26,26);(19,19) **@{-}, 
(17,17);(10,10) **@{-} ?>*\dir{>}, (18,7)*{D},
\endxy
}
\caption{Removing a crossing}\label{fig-cr-remov}
\end{figure}
We label the connected components of $\widehat D$ as $b_{1},\ldots,b_{n}$. Let $C(D)=\{c_{1},\ldots,c_{m}\}$ be the set of all crossings of $D$ and let $\epsilon_i (1 \leq i \leq m)$ be the sign of the crossing $c_i$. For each crossing $c_i \in C(D)$, if the crossing $c_i$ has four adjacent connected components (they may not be distinct), say $b_{i_1}, b_{i_2}, b_{i_3}, b_{i_4}$, we produce two relations $r_{\epsilon_i}^{1}(c_{i})$ and $r_{\epsilon_i}^{2}(c_{i})$ depending on the crossing sign $\epsilon_i=+$ or $-$ as shown in Fig.~\ref{fig-qd-rels}. 

\begin{figure}[th]
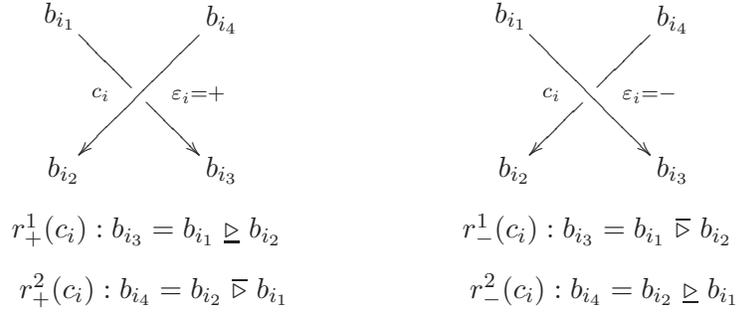

\centerline{ \xy (10,10);(26,26) **@{-} ?<*\dir{<},
(26,10);(19,17) **@{-} ?<*\dir{<}, (17,19);(10,26) **@{-},
(8,8)*{b_{i_2}}, (29,8)*{b_{i_3}}, (7.5,28.5)*{b_{i_1}}, (29,28)*{b_{i_4}},
(26,18)*{_{\varepsilon_i=+}}, 
(13,18)*{_{c_i}},
(19,0)*{r_{+}^1(c_i): b_{i_3}=\lt{b_{i_1}}{b_{i_2}}},
(20,-8)*{r_{+}^2(c_i): b_{i_4}=\ut{b_{i_2}}{b_{i_1}}},
\endxy
 \qquad\qquad\qquad
\xy (10,26);(26,10) **@{-} ?>*\dir{>}, 
(26,26);(19,19) **@{-}, (17,17);(10,10) **@{-} ?>*\dir{>},
(8,8)*{b_{i_2}}, (29,8)*{b_{i_3}}, (7.5,28.5)*{b_{i_1}}, (29,28)*{b_{i_4}},
(26,18)*{_{\varepsilon_i=-}}, 
(13,18)*{_{c_i}},
(19,0)*{r_{-}^1(c_i): b_{i_3}=\ut{b_{i_1}}{b_{i_2}}},
(20,-8)*{r_{-}^2(c_i): b_{i_4}=\lt{b_{i_2}}{b_{i_1}}},
\endxy
}
 \vspace*{8pt} \caption{Relations at a crossing}
 \label{fig-qd-rels}
\end{figure} 

Then the biquandle $BQ(D)$ associated with $D$ is defined by the biquandle with a presentation:
\begin{equation}\label{pr-biqd}
P_D=<b_{1},\hdots,b_{n}~|~r_{\epsilon_1}^{1}(c_{1}), r_{\epsilon_1}^{2}(c_{1}),\ldots,r_{\epsilon_m}^{1}(c_{m}), r_{\epsilon_m}^{2}(c_{m})>.
\end{equation}
It is noted that if $D$ and $D'$ are two admissible marked graph diagrams presenting the same surface-link $\mathcal L$, then $BQ(D)$ is isomorphic to $BQ(D')$ and hence $BQ(D)$ is an invariant of the surface-link $\mathcal L$. The biquandle $BQ(D)$ with a presentation in (\ref{pr-biqd}) is called the {\it fundamental biquandle} of $\mathcal L$ and denoted by $BQ(\mathcal L)$ (cf. \cite{As},\cite{Ca}). If $D$ is a classical link diagram, then the biquandle $BQ(D)$ with a presentation in (\ref{pr-biqd}) is the fundamental biquandle of the classical link $L$ in $\mathbb R^3$ presented by $D$.

\bigskip

Now let $\mathcal B$ be a broken surface diagram of a surface-link $\mathcal L$ in $\mathbb R^4$ and let $S(\mathcal B)$ be the set of the semi-sheets in $\mathcal B$. For a given finite biquandle $X$, we define a {\it biquandle coloring} of $\mathcal B$ by $X$ or a {\it  biquandle $X$-coloring} of $\mathcal B$ to be a function $\mathcal C:S(\mathcal B) \rightarrow X$ satisfying the following condition at each double point curve: At a double point curve, two coordinate planes intersect locally and one is the under-sheet and the other is the over-sheet. The under-sheet (resp. over-sheet) is broken into two semi-sheets, say $u_1$ and $u_2$ (resp. $o_1$ and $o_2$). A normal of the under-sheet (resp. over-sheet) points to one of the components, say $o_2$ (resp. $u_2$). If $\mathcal C(u_1)=a$ and $\mathcal C(o_1)=b,$ then we require that $\mathcal C(u_2)=\lt{a}{b}$ and $\mathcal C(o_2)=\ut{b}{a}$ (see Fig.~\ref{fig-dpc}). The biquandle element $\mathcal C(s)$ assigned to a semi-sheet $s$ by a biquandle coloring is called a {\it color} of $s.$ Using biquandle axioms, it is easily checked that the above condition is compatible at each triple point of $\mathcal B$ as illustrated in Fig.~\ref{fig-dpc} (right). We denote by ${\rm Col}^B_X(\mathcal B)$ the set of all biquandle colorings of $\mathcal B$ by $X$. 

\begin{figure}[ht]
\begin{center}
\resizebox{0.8\textwidth}{!}{%
  \includegraphics{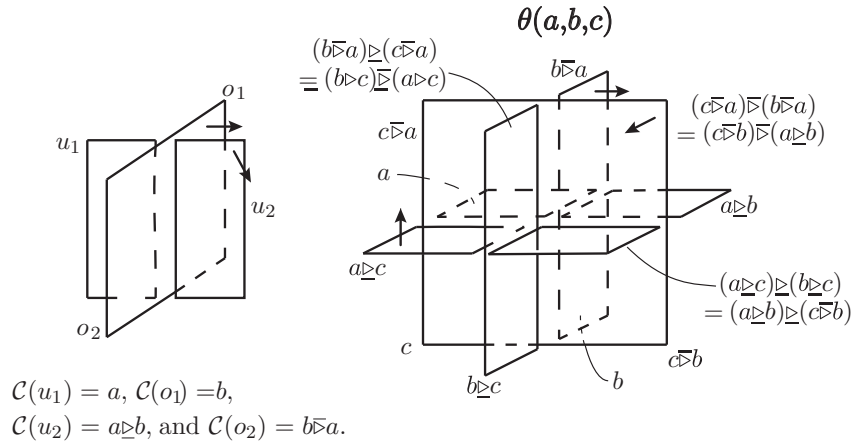}}
\caption{Biquandle colors at a double point curve and a triple point}\label{fig-dpc}
\end{center}
\end{figure}

\begin{theorem}[\cite{Ca}]\label{thm-bsd-cnbr}
Let $\mathcal L$ be a surface-link and let $\mathcal B$ and $\mathcal B'$ be two broken surface diagrams of $\mathcal L$. Then for any finite biquandle $X$, there is a one-to-one correspondence between ${\rm Col}^{\rm B}_{X}(\mathcal B)$ and ${\rm Col}^{\rm B}_{X}(\mathcal B')$. 
\end{theorem}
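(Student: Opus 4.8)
The plan is to reduce the statement to a local study of Roseman moves. By Theorem~\ref{thm-RosM}, the two diagrams are related by a finite sequence
$$\mathcal B=\mathcal B_0\to\mathcal B_1\to\cdots\to\mathcal B_k=\mathcal B'$$
of ambient isotopies of $\mathbb R^3$ and Roseman moves, and since a composition of bijections is a bijection it suffices to build, for each elementary step $\mathcal B_i\to\mathcal B_{i+1}$, a bijection between ${\rm Col}^{\rm B}_X(\mathcal B_i)$ and ${\rm Col}^{\rm B}_X(\mathcal B_{i+1})$. For an ambient isotopy there is nothing to do, as a coloring is transported with the sheets. For a Roseman move there is a $3$-ball $B\subset\mathbb R^3$ outside of which $\mathcal B_i$ and $\mathcal B_{i+1}$ agree; letting $\Gamma$ denote the finite set of semi-sheets meeting $\partial B$, I would show that restriction to $\Gamma$ identifies each of ${\rm Col}^{\rm B}_X(\mathcal B_i)$ and ${\rm Col}^{\rm B}_X(\mathcal B_{i+1})$ with the \emph{same} subset of $X^{\Gamma}$ --- the colorings of the common exterior together with, on each side, the unique consistent extension across $B$. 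The elementary bijection is then ``restrict to $\Gamma$ on one side, extend across $B$ on the other,'' and it is the identity on the colors of the exterior.

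The core of the argument is therefore the assertion that, for each of the seven Roseman moves and on each of its two sides, a coloring of the semi-sheets meeting $\partial B$ that is consistent outside $B$ admits a unique extension over $B$. I would organize the moves by the local features they create or destroy. Those that merely rearrange sheets without altering the pattern of double curves and triple points give an obvious bijection. For the move that creates (or deletes) a pair of parallel double curves --- the surface analogue of a Reidemeister~II move --- the colors of the new semi-sheets are forced by the coloring rule $\mathcal C(u_2)=\lt{\mathcal C(u_1)}{\mathcal C(o_1)}$, $\mathcal C(o_2)=\ut{\mathcal C(o_1)}{\mathcal C(u_1)}$ of Fig.~\ref{fig-dpc}, so existence and uniqueness of the extension is exactly the invertibility of the operations in axiom (B2), supplemented in the opposite co-orientation sub-cases by the invertibility of the map $H$ in axiom (B3). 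For the branch-point moves the singular sheet meets two double arcs emanating from the branch point, along which the two operations are applied with coinciding arguments, and the compatibility of the two induced colors of the sheet near the branch point is precisely axiom (B1), $\lt{x}{x}=\ut{x}{x}$; uniqueness again follows from (B2).

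The remaining and genuinely delicate case is the Roseman move that pushes a sheet across a triple point, the analogue of a Reidemeister~III move. Here the three equations of axiom (B4) are exactly the identifications of the two ways of computing the color of each of the three ``outgoing'' semi-sheets near the triple point --- this is the compatibility already noted in the text and pictured in Fig.~\ref{fig-dpc} (right). I would verify, co-orientation sub-case by co-orientation sub-case, that the data on $\partial B$ read off from one side of the move coincides with the data read off from the other, each such identity reducing to one of the three equations of (B4), or to a consequence of (B4) together with (B2) and (B3) obtained by applying the inverse operations. I expect the bookkeeping of the normals on the over- and under-sheets to be the main obstacle: one has to keep track of which of the three sheets is ``top,'' ``middle'' and ``bottom'' at the triple point and of the direction of each normal, and then match the resulting instance of (B4) to the correct one; there are several sub-cases and each is dispatched by a short diagram chase.

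Finally, this entire analysis is most naturally phrased through the fundamental biquandle. A biquandle $X$-coloring of $\mathcal B$ is the same as a biquandle homomorphism $BQ(\mathcal B)\to X$, where $BQ(\mathcal B)$ is presented by one generator for each semi-sheet and two relations --- one for $\underline{\triangleright}$ and one for $\overline{\triangleright}$ --- for each double curve, exactly as in presentation (\ref{pr-biqd}); thus ${\rm Col}^{\rm B}_X(\mathcal B)={\rm Hom}(BQ(\mathcal B),X)$. From this viewpoint the move-by-move computation above amounts to a sequence of Tietze transformations producing an isomorphism $BQ(\mathcal B)\cong BQ(\mathcal B')$, so that $BQ(\mathcal L):=BQ(\mathcal B)$ is a well-defined invariant of $\mathcal L$ (cf.~\cite{As},\cite{Ca}); applying ${\rm Hom}(-,X)$ yields the asserted one-to-one correspondence. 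Only the existence of a bijection is required, so there is no need to worry that the composite might depend on the chosen Roseman sequence --- although in fact it does not, since each elementary bijection fixes the colors of the common exterior.
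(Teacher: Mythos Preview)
Your proposal is correct and follows essentially the same approach as the paper: reduce via Theorem~\ref{thm-RosM} to a single Roseman move supported in a $3$-ball, and show that a coloring of the common exterior extends uniquely across the ball on either side using the biquandle axioms. The paper's proof is much terser---it simply asserts the unique extension ``using biquandle axioms of Definition~\ref{defn-biqdle} and Fig.~\ref{fig-dpc}'' without the move-by-move breakdown---whereas you spell out which axiom handles which local pattern and add the fundamental-biquandle reformulation via ${\rm Hom}(BQ(\mathcal B),X)$; this is helpful elaboration rather than a different method.
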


\begin{proof}
By Theorem \ref{thm-YosM}, it suffices to verify the assertion for the case that $\mathcal B'$ is obtained from $\mathcal B$ by a single Roseman move. Let $E$ be an open $3$-disk in $\mathbb R^3$ where a Roseman move under consideration is applied. Then $\mathcal B\cap (\mathbb R^3 - E) = \mathcal B'\cap (\mathbb R^3 -E)$. Now let $\mathcal C$ be a biquandle $X$-coloring of $\mathcal B$. Using biquandle axioms of Definition \ref{defn-biqdle} and Fig.~\ref{fig-dpc}, for each Roseman move, it is seen that the restriction of $\mathcal C$ to $\mathcal B\cap (\mathbb R^3 - E) (=\mathcal B'\cap (\mathbb R^3 -E))$ can be extended to a unique biquandle $X$-coloring $\mathcal C'$ of $\mathcal B'$ and conversely the restriction of the unique biquandle $X$-coloring $\mathcal C'$ to $\mathcal B'\cap (\mathbb R^3 -E)$ is extended to the biquandle $X$-coloring $\mathcal C$ of $\mathcal B$. \end{proof}

\begin{theorem}[\cite{As}]\label{thm-col}
Let $\mathcal L$ be a surface-link and let $D$ and $\mathcal B$ be a marked graph diagram and a broken surface diagram presenting $\mathcal L$, respectively. Then there is a one-to-one correspondence between ${\rm Col}^B_X(D)$ and ${\rm Col}^B_X(\mathcal B)$. Consequently, $\sharp{\rm Col}^B_X(D)=\sharp{\rm Col}^B_X(\mathcal B)$.
\end{theorem}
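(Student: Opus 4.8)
The plan is to reduce to the particular broken surface diagram $\mathcal{B}(D)$ associated with $D$ (Ashihara's construction recalled above) and then to match biquandle colorings cross-section by cross-section along the $t$-axis. First, since $\mathcal{B}(D)$ is a broken surface diagram of a surface-link equivalent to $\mathcal{L}$, Theorem \ref{thm-bsd-cnbr} gives a bijection between ${\rm Col}^B_X(\mathcal B)$ and ${\rm Col}^B_X(\mathcal B(D))$. Hence it suffices to construct a bijection $\Phi : {\rm Col}^B_X(D) \to {\rm Col}^B_X(\mathcal B(D))$, and then compose.

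Recall that $\mathcal{B}(D)$ is built by stacking cross-sections: it is a product over each open interval between consecutive ``critical'' levels $t\in\{-2,-1,0,1,2\}\cup\{t_i\}\cup\{t_j'\}$; near $t=0$ it consists of $D_0$ at $t=0$ together with, at each marked vertex $v$, a saddle band joining the two arcs of the positive resolution (above) to the two arcs of the negative resolution (below); the cross-sections for $t\in(0,1]$ (resp. $t\in[-1,0)$) are isotopic to $D_+$ (resp. $D_-$); the levels $t_i$ and $t_j'$ realize the chosen Reidemeister moves through the local models $M_{(i)},M_{(j)}'$ of Figs.~\ref{fig-m1}--\ref{fig-m3}; and the levels $t=\pm 2$ cap the trivial diagrams $O,O'$ by the embedded disks $B_i,B_i'$. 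Given $\mathcal{C}\in{\rm Col}^B_X(D)$, assign to each semi-sheet of $\mathcal B(D)$ near level $0$ the color $\mathcal{C}$ gives to the corresponding semi-arc of $D$; for the saddle band at a marked vertex $v$, all four adjacent semi-arcs $s_1,\dots,s_4$ carry the common color $\mathcal C(s_1)=\cdots=\mathcal C(s_4)$ forced by condition (3) of Definition \ref{defn-qc-1}, so the band is consistently monochromatic. Then propagate the coloring through each product region (a biquandle coloring of a link diagram is transported by any ambient isotopy of $\mathbb R^2$, since semi-arcs and relations correspond) and through each move level, where the extension is forced: by axiom (B1) at the branch point created by an $R_1$-move, by axioms (B2) and (B3) at an $R_2$-move, and by axiom (B4) at an $R_3$-move — these are precisely the local verifications underlying the invariance of biquandle colorings under oriented Reidemeister moves (cf. Figs.~\ref{fig-clcol} and \ref{fig-r12}), now read off the models $M_{(i)}$. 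Finally the coloring extends over the capping disks at $t=\pm 2$ automatically, because (choosing the resolving sequences so that $O,O'$ are crossingless) each component of $O$ or $O'$ is a single semi-arc, hence monochromatic, and the disk bounding it is a single sheet with no double curves. Since every step of the extension is unique, $\Phi(\mathcal C)\in{\rm Col}^B_X(\mathcal B(D))$ is well defined.

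Bijectivity follows by exhibiting restriction-to-level-$0$ as a two-sided inverse. Given $\mathcal D\in{\rm Col}^B_X(\mathcal B(D))$, restricting to the cross-section at $t=0$ colors the semi-arcs of $D_0$; the coloring condition of $\mathcal D$ along the vertical double-point arc issuing from a crossing $c$ of $D$ is exactly condition (1) or (2) at $c$, and the requirement that each saddle band be monochromatic is exactly condition (3) at the corresponding marked vertex, so this restriction is a biquandle $X$-coloring $\mathcal C$ of $D$. By uniqueness of the level-by-level extension we get $\Phi(\mathcal C)=\mathcal D$, and conversely $\Phi(\mathcal C)$ restricted to level $0$ returns $\mathcal C$ by construction. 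Thus $\Phi$ is a bijection; composing with the bijection of Theorem \ref{thm-bsd-cnbr} yields the desired one-to-one correspondence between ${\rm Col}^B_X(D)$ and ${\rm Col}^B_X(\mathcal B)$, and in particular $\sharp{\rm Col}^B_X(D)=\sharp{\rm Col}^B_X(\mathcal B)$.

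The main obstacle is the middle step: one must check, for each of the three local models $M_{(i)}$ and each co-orientation pattern, that a biquandle coloring below the move level extends uniquely above it, and in particular that the branch points produced by $R_1$-moves cause no inconsistency (there the ends of the double-point curve impose a $(B1)$-type identity on the semi-sheet through the cusp). Once these local compatibilities are in place — the same computations that appear in the classical proof that biquandle colorings are Reidemeister-invariant — the global statement is just the concatenation of the levels; the only other point requiring a little care is the degenerate level $t=0$, where one must verify that the saddle bands contribute exactly condition (3) and nothing more.
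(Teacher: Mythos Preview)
Your argument is correct. Both you and the paper begin by invoking Theorem~\ref{thm-bsd-cnbr} to reduce to the specific broken surface diagram $\mathcal B(D)$ associated with $D$, but from there the routes diverge. The paper proceeds algebraically: it identifies ${\rm Col}^B_X(D)$ with ${\rm Hom}(BQ(D),X)$ and ${\rm Col}^B_X(\mathcal B(D))$ with ${\rm Hom}(BQ(\mathcal B(D)),X)$, and then appeals to Ashihara's result that the fundamental biquandles $BQ(D)$ and $BQ(\mathcal B(D))$ are naturally isomorphic. You instead construct the bijection directly at the level of colorings, propagating a coloring of $D$ cross-section by cross-section through the product pieces, the saddle bands, the Reidemeister-move local models $M_{(i)}$, and the capping disks, checking at each step that the extension exists and is unique via the biquandle axioms (B1)--(B4) and condition (3) at marked vertices. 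Your approach is more elementary and self-contained, since it avoids the fundamental-biquandle machinery and does not black-box the cited isomorphism; on the other hand, the paper's route is shorter on the page and isolates the geometric content in a single reusable algebraic statement ($BQ(D)\cong BQ(\mathcal B(D))$). In effect you are unpacking, at the level of $X$-colorings, exactly the local verifications that go into Ashihara's isomorphism.
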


\begin{proof}
Let $D$ be a marked graph diagram presenting a surface-link $\mathcal L$ in $\mathbb R^4$. It is direct by definition that there is a one-to-one correspondence between ${\rm Col}^B_X(D)$ and ${\rm Hom}(BQ(D),X)$. 
By Theorem \ref{thm-bsd-cnbr}, we may assume that $\mathcal B$ is a broken surface diagram $\mathcal B(D)$ associated with $D$ defined in Section \ref{sect-mgd}. Then by the same argument as in \cite{As} we see that there is a natural isomorphism from $BQ(D)$ to the fundamental quandle $BQ(\mathcal B)$ of $\mathcal B$. Since ${\rm Col}^B_X(D)$ is identified with ${\rm Hom}(BQ(D),X)$ and ${\rm Col}^B_X(\mathcal B)$ is identified with ${\rm Hom}(BQ(\mathcal B),X)$, there is a bijection from ${\rm Col}^B_X(D)$ to ${\rm Col}^B_X(\mathcal B)$. 
\end{proof}

We call the cardinal number $\sharp{\rm Col}^B_X(\mathcal L)$ the {\it biquandle $X$-coloring number} of $\mathcal L$ (see Example \ref{examp-1}). In particular, if $D$ is a classical link diagram, i.e., a marked graph diagram without marked vertices, then the biquandle $X$-coloring number $\sharp{\rm Col}^B_X(D)$ is an invariant of the link $L$ in $\mathbb R^3$ presented by $D$. 


\section{Biquandle (co)homology groups}\label{sect-bqht}

Let $X$ be a biquandle. For each positive integer $n$, let $C_n^{\rm BR}(X)$ be the free abelian group generated by $n$-tuples $(x_1,\ldots,x_n)$ of elements of $X$. We assume that $C_n^{\rm BR}(X)=\{0\}$ for all $n\leq 0$. Define a homomorphism $\partial_n:C_n^{\rm BR}(X)\rightarrow C_{n-1}^{\rm BR}(X)$ by
\begin{equation*}
\begin{split}
\partial_n(x_1,\ldots,x_n)&=\sum_{i=1}^{n}(-1)^{i}[(x_1,\ldots,x_{i-1},x_{i+1},\ldots,x_n)\\
&-(\lt{x_1}{x_i},\ldots,\lt{x_{i-1}}{x_i},\ut{x_{i+1}}{x_i},\ldots,\ut{x_n}{x_i})]
\end{split}
\end{equation*}
for $n\geq2$ and $\partial_n=0$ for $n\leq1.$ It is verified that for each integer $n$, $\partial_{n-1}\partial_n=0.$ Therefore $C_\ast^{\rm BR}(X)= \{C_n^{\rm BR}(X),\partial_n\}$ is a chain complex.

Let $C_n^{\rm BD}(X)$ be the subset of $C_n^{\rm BR}(X)$ generated by $n$-tuples $(x_1,\ldots,x_n)$ with $x_i=x_{i+1}$ for some $i\in\{1,\ldots,n-1\}$ if $n\geq2$; otherwise, let $C_n^{\rm BD}(X)=0.$ Then $\partial_n(C_n^{\rm BD}(X))\subset C_{n-1}^{\rm BD}(X)$ and $C_\ast^{\rm BD}(X)=\{C_n^{\rm BD}(X),\partial_n\}$ is a subcomplex of $C_\ast^{\rm BR}(X).$ Put $C_n^{\rm BQ}(X)=C_n^{\rm BR}(X)/C_n^{\rm BD}(X)$ and $C_\ast^{\rm BQ}(X)=\{C_n^{\rm BQ}(X),\partial_n\}$.

For an abelian group $A$, we define the biquandle chain and cochain complexes
\begin{align*}
&C_\ast^{\rm W}(X;A)=C_\ast^{\rm W}(X)\otimes A,& &\partial=\partial\otimes \text{id},
\\
&C^\ast_{\rm W}(X;A)=\text{Hom}(C_\ast^{\rm W}(X), A),& &\delta=\text{Hom}(\partial,\text{id})
\end{align*}
in the usual way, where ${\rm W=BR, BD, BQ}$.

\begin{definition}
The $n$th {\it birack homology group} and the $n$th {\it birack cohomology group} of a birack (biquandle) $X$ with coefficient group $A$ are defined by
\begin{align*}
H_n^{\rm BR}(X;A)=H_n(C_\ast^{\rm BR}(X;A)), H_{\rm BR}^n(X;A)=H^n(C^\ast_{\rm BR}(X;A)).
\end{align*}
The $n$th {\it degeneration homology group} and the $n$th {\it degeneration  cohomology group} of a biquandle $X$ with coefficient group $A$ are defined by
\begin{align*}
H_n^{\rm BD}(X;A)=H_n(C_\ast^{\rm BD}(X;A)), H_{\rm BD}^n(X;A)=H^n(C^\ast_{\rm BD}(X;A)).
\end{align*}
The $n$th {\it biquandle homology group} and the $n$th {\it biquandle  cohomology group} of a biquandle $X$ with coefficient group $A$ are defined by
\begin{align*}
H_n^{\rm BQ}(X;A)=H_n(C_\ast^{\rm BQ}(X;A)), H_{\rm BQ}^n(X;A)=H^n(C^\ast_{\rm BQ}(X;A)).
\end{align*}
\end{definition}

The $n$-cycle group and $n$-boundary group (resp. $n$-cocycle group and $n$-coboundary group) are denoted by $Z_n^{\rm BQ}(X;A)$ and $B^{\rm BQ}_n(X;A)$ (resp. $Z_{\rm BQ}^n(X;A)$ and $B^n_{\rm BQ}(X;A)$). Then
$$H_n^{\rm BQ}(X;A)=Z_n^{\rm BQ}(X;A)/B_n^{\rm BQ}(X;A), H_{\rm BQ}^n(X;A)=Z_{\rm BQ}^n(X;A)/B_{\rm BQ}^n(X;A).$$
We will omit the coefficient group $A$ if $A=\mathbb Z$ as usual.
The biquandle homology and cohomology groups are also known as the {\it Yang-Baxter homology} and {\it cohomology groups} (cf. \cite{CES}). 

\begin{lemma}\label{rmk-bq2cocy} 
A homomorphism $\phi: C^{\rm BR}_2(X) \to A$ is a $2$-cocycle of the biquandle cochain complex $C^\ast_{\rm BQ}(X;A)$ if and only if $\phi$ satisfies the following two conditions:
\begin{itemize}
\item [(i)] $\phi(a,a)=0$ for all $a \in X$.
\item [(ii)] $\phi(b,c)+\phi(a,b)+\phi(\lt{a}{b},\ut{c}{b})=\phi(a,c)+\phi(\ut{b}{a},\ut{c}{a})+\phi(\lt{a}{c},\lt{b}{c})$ for all $a, b, c \in X$. 
\end{itemize}
\end{lemma}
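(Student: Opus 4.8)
The plan is to unwind the definitions directly. First I would recall that, since $C^{\rm BQ}_\ast(X;A) = \mathrm{Hom}(C^{\rm BQ}_\ast(X),A)$ with $C^{\rm BQ}_n(X) = C^{\rm BR}_n(X)/C^{\rm BD}_n(X)$, a homomorphism $\phi : C^{\rm BR}_2(X) \to A$ descends to a cochain on $C^{\rm BQ}_2(X)$ if and only if $\phi$ vanishes on $C^{\rm BD}_2(X)$, i.e. $\phi(a,a) = 0$ for all $a \in X$; this is precisely condition (i). So the real content is that, assuming (i), being a $2$-cocycle of $C^\ast_{\rm BQ}(X;A)$ is equivalent to condition (ii).

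Next I would compute the coboundary $\delta^2 \phi = \phi \circ \partial_3$ on a generator $(a,b,c)$ of $C^{\rm BR}_3(X)$. Using the given formula for $\partial_3$ with $n = 3$, we have
\begin{align*}
\partial_3(a,b,c) &= -\,[(b,c) - (\lt{b}{a},\ut{c}{a})] \\
&\quad + [(a,c) - (\lt{a}{b},\ut{c}{b})] \\
&\quad - [(a,b) - (\lt{a}{c},\lt{b}{c})].
\end{align*}
Applying $\phi$ and setting $\delta^2\phi = 0$ gives
\[
-\phi(b,c) + \phi(\lt{b}{a},\ut{c}{a}) + \phi(a,c) - \phi(\lt{a}{b},\ut{c}{b}) - \phi(a,b) + \phi(\lt{a}{c},\lt{b}{c}) = 0,
\]
which, after rearranging the terms to move the negative ones to the right-hand side, is exactly condition (ii) (with the roles of the variables read off correctly: $\phi(\ut{b}{a},\ut{c}{a})$ should be checked against $\phi(\lt{b}{a},\ut{c}{a})$ — I will need to be careful here about whether the first slot carries $\underline{\triangleright}$ or $\overline{\triangleright}$, matching the indexing convention in $\partial_n$ where the first $i-1$ entries get $\underline{\triangleright}$ and the later ones get $\overline{\triangleright}$; when $x_i$ is the first variable $a$, all surviving entries $b,c$ are "later" and hence receive $\overline{\triangleright}$, giving $(\ut{b}{a},\ut{c}{a})$ as in the statement). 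Conversely, if $\phi$ satisfies (ii) for all $a,b,c$, then $\delta^2\phi$ vanishes on every generator $(a,b,c)$, hence $\phi$ is a $2$-cocycle.

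I should also verify the boundary direction, namely that no condition from $C^{\rm BD}$ in degree $3$ is lost: a generator of $C^{\rm BD}_3(X)$ has the form $(a,a,c)$ or $(a,c,c)$, and I would note that $\partial_3$ maps these into $C^{\rm BD}_2(X)$ (this is exactly the statement, proved earlier in the excerpt, that $C^{\rm BD}_\ast(X)$ is a subcomplex), so that $\phi \circ \partial_3$ automatically annihilates them once $\phi$ annihilates $C^{\rm BD}_2(X)$; thus the cocycle condition on $C^{\rm BQ}_3(X)$ imposes nothing beyond what we get from testing generators $(a,b,c)$ of $C^{\rm BR}_3(X)$ together with (i). The only mild obstacle is bookkeeping: getting the signs and the placement of $\underline{\triangleright}$ versus $\overline{\triangleright}$ in each term of $\partial_3(a,b,c)$ to line up verbatim with the displayed equality in (ii). Once that is done carefully, the lemma follows immediately.
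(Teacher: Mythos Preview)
Your proposal is correct and is precisely the direct calculation the paper has in mind; the paper's own proof reads only ``It follows from the definition by a direct calculation.'' Your self-correction on the $i=1$ term is right: since there are no entries preceding $x_1=a$, both surviving entries receive $\overline{\triangleright}$, giving $(\ut{b}{a},\ut{c}{a})$ as in condition (ii).
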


\begin{proof}
It follows from the definition by a direct calculation. 
\end{proof}

The two conditions (i) and (ii) are called the {\it biquandle 2-cocycle condition}. A homomorphism $\phi: C^{\rm BR}_2(X) \to A$ or a map $\phi : X \times X \to A$ satisfying the biquandle 2-cocycle condition is called a {\it biquandle $2$-cocycle}.

\begin{lemma}\label{rmk-bq3cocy}
A homomorphism $\theta: C^{\rm BR}_3(X) \to A$ is a 3-cocycle of the biquandle cochain complex $C^\ast_{\rm BQ}(X;A)$ if and only if $\theta$ satisfies the following two conditions:
\begin{itemize}
\item [(i)] $\theta(a,a,b)=0$ and $\theta(a,b,b)=0$ for all $a, b \in X$.
\item [(ii)] $\theta(b,c,d)+\theta(a,b,d)+\theta(\lt{a}{b},\ut{c}{b},\ut{d}{b})+\theta(\lt{a}{d},\lt{b}{d},\lt{c}{d})$
$=\theta(a,c,d)+\theta(a,b,c)+\theta(\ut{b}{a},\ut{c}{a},\ut{d}{a})+\theta(\lt{a}{c},\lt{b}{c},\ut{d}{c})$ for all $a, b, c, d \in X$.
\end{itemize}
\end{lemma}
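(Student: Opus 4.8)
The plan is to unwind the definitions exactly as in the proof of Lemma~\ref{rmk-bq2cocy}, but one degree higher. A homomorphism $\theta : C^{\rm BR}_3(X) \to A$ is a $3$-cocycle of $C^*_{\rm BQ}(X;A)$ precisely when (a) $\theta$ vanishes on the degeneration subgroup $C^{\rm BD}_3(X)$, i.e.\ $\theta(a,a,b)=0$ and $\theta(a,b,b)=0$ for all $a,b\in X$, and (b) $\delta_3\theta = \theta\circ\partial_4 = 0$ on $C^{\rm BR}_4(X)$. So the work is to evaluate $\theta(\partial_4(a,b,c,d))$ using the explicit formula for $\partial_n$ with $n=4$, collect terms, and check that the resulting identity on $3$-tuples is exactly condition (ii).

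First I would write out $\partial_4(a,b,c,d)=\sum_{i=1}^4(-1)^i[\,\text{(delete $x_i$)} - \text{(act by $x_i$ on the rest)}\,]$. The ``delete'' halves give $\pm$ the six $3$-tuples obtained from $\{a,b,c,d\}$ by omitting one entry: $-(b,c,d)+(a,c,d)-(a,b,d)+(a,b,c)$. The ``act'' halves, with signs $+,-,+,-$, give (for $i=1,2,3,4$ respectively) the $3$-tuples $(\ut{b}{a},\ut{c}{a},\ut{d}{a})$, $(\lt{a}{b},\ut{c}{b},\ut{d}{b})$, $(\lt{a}{c},\lt{b}{c},\ut{d}{c})$, $(\lt{a}{d},\lt{b}{d},\lt{c}{d})$ — here one must be careful: when acting by $x_i$, every entry with index $<i$ is hit by $\underline\triangleright$ and every entry with index $>i$ is hit by $\overline\triangleright$, matching the formula $(\lt{x_1}{x_i},\dots,\lt{x_{i-1}}{x_i},\ut{x_{i+1}}{x_i},\dots,\ut{x_n}{x_i})$. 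Applying $\theta$ and setting the result to $0$ then yields
\[
-\theta(b,c,d)+\theta(a,c,d)-\theta(a,b,d)+\theta(a,b,c)
-\theta(\ut{b}{a},\ut{c}{a},\ut{d}{a})+\theta(\lt{a}{b},\ut{c}{b},\ut{d}{b})
-\theta(\lt{a}{c},\lt{b}{c},\ut{d}{c})+\theta(\lt{a}{d},\lt{b}{d},\lt{c}{d})=0,
\]
which rearranges termwise into condition (ii). Conversely, since $C^{\rm BR}_4(X)$ is free on the $4$-tuples, an identity of this shape for all $(a,b,c,d)$ says exactly that $\theta\circ\partial_4=0$, and the vanishing on $C^{\rm BD}_3(X)$ is literally condition (i) (a general degenerate generator of $C^{\rm BD}_3(X)$ has $x_1=x_2$ or $x_2=x_3$). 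This gives both implications.

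The only genuine obstacle is bookkeeping: getting the signs $(-1)^i$ right on the six deletion terms, and — more error-prone — correctly recording which of $\underline\triangleright$, $\overline\triangleright$ decorates each surviving entry in the eight ``action'' terms as $i$ runs over $1,2,3,4$. There is no conceptual content beyond this; the biquandle axioms (B1)--(B4) are not needed for the cocycle condition itself (they are what make $\partial\partial=0$, which is already granted). I would therefore simply say the lemma follows from the definition by a direct calculation, exhibiting the expansion of $\partial_4(a,b,c,d)$ as the one computational step worth displaying, and note that condition (i) is the statement that $\theta$ annihilates $C^{\rm BD}_3(X)$.

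\begin{proof}
As in Lemma~\ref{rmk-bq2cocy}, $\theta$ is a $3$-cocycle of $C^\ast_{\rm BQ}(X;A)$ if and only if $\theta$ vanishes on the degeneracy subgroup $C^{\rm BD}_3(X)$ and $\theta\circ\partial_4=0$. The first condition is precisely (i), since $C^{\rm BD}_3(X)$ is generated by the triples $(a,a,b)$ and $(a,b,b)$. For the second, expanding $\partial_4$ gives
\begin{align*}
\partial_4(a,b,c,d)=\ &{-}(b,c,d)+(a,c,d)-(a,b,d)+(a,b,c)\\
&+(\ut{b}{a},\ut{c}{a},\ut{d}{a})-(\lt{a}{b},\ut{c}{b},\ut{d}{b})\\
&+(\lt{a}{c},\lt{b}{c},\ut{d}{c})-(\lt{a}{d},\lt{b}{d},\lt{c}{d}).
\end{align*}
Since $C^{\rm BR}_4(X)$ is free on the $4$-tuples, $\theta\circ\partial_4=0$ is equivalent to $\theta(\partial_4(a,b,c,d))=0$ for all $a,b,c,d\in X$, which upon applying $\theta$ and transposing the negative terms is exactly condition (ii).
\end{proof}
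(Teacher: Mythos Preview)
Your formal proof is correct and is exactly the direct calculation the paper has in mind; the paper's own proof simply reads ``It follows from the definition by a direct calculation,'' and you have supplied that calculation. One minor slip: in your informal discussion the displayed equation has the signs on the four ``action'' terms reversed (you wrote $-\theta(\ut{b}{a},\ut{c}{a},\ut{d}{a})+\cdots$ where it should be $+\theta(\ut{b}{a},\ut{c}{a},\ut{d}{a})-\cdots$), but your formal proof block has the expansion of $\partial_4(a,b,c,d)$ right, so the argument goes through.
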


\begin{proof}
It follows from the definition by a direct calculation. 
\end{proof}

The two conditions (i) and (ii) are called the {\it biquandle 3-cocycle condition}. A homomorphism $\theta : C^{\rm BR}_3(X) \to A$ or a map $\theta : X \times X \times X \to A$ satisfying the biquandle 3-cocycle condition is called a {\it biquandle $3$-cocycle}.


\section{Biquandle cocycle invariants of surface-links}
\label{sect-cisl}

 By using the cohomology theory of quandles, the {\it quandle cocycle invariants} 
are defined for classical links and surface-links via link diagrams and broken surface  diagrams (\cite{CJKLS}). These invariants are defined as state-sums over all quandle colorings of arcs and sheets by use of Boltzman weights that are evaluations of a fixed cocycle at crossings and triple points in link diagrams and broken surface diagrams, respectively. In \cite{CES}, J. S. Carter, M. Elhamdadi and M. Saito introduced a homology theory for the set-theoretic Yang-Baxter equations and used cocycles to define invariants of (virtual) links via colorings of (virtual) link diagrams by biquandles and a state-sum formulation. In this section, we first recall the biquandle cocycle invariants of links and then develop biquandle cocycle invariants of surface-links which is a generalization of quandle cocycle invariants of surface-links. We begin with introducing biquandle cocycle invariants of links in the terminologies of this paper.

Let $X$ be a finite biquandle and let $A$ be an abelian group written multiplicatively.
Let $\phi$ be a biquandle $2$-cocycle. Let $D$ be a link diagram and let $\mathcal C$ be a biquandle $X$-coloring of $D$. A ({\it Boltzman}) {\it weight} $W^B_\phi(c,\mathcal C)$ (associated with $\phi$) at a crossing $c$ of $D$ is defined as follows. Let $u_1$ (resp. $o_1$) be the semi-arc in the under-arc (resp. over-arc) that intersects with the source region of the crossing $c$. Let $a=\mathcal C(u_1)$ and  $b=\mathcal C(o_1)$.  Then we define $W^B_\phi(c,\mathcal C)=\phi(a,b)^{\epsilon(c)} \in A$, where $\epsilon(c)=1$ or $-1$ if the sign of $c$ is positive or negative, respectively. The {\it state-sum} or {\it partition function} of $D$ (associated with $\phi$) is defined by the expression $$\Phi^B_\phi(D;A)=\sum_{\mathcal C\in {\rm Col}^B_X(D)}\prod_{c\in C(D)}W^B_\phi(c,\mathcal C),$$ where the value of the state-sum $\Phi^B_\phi(D;A)$ is in the group ring $\mathbb Z[A].$ 

\begin{theorem}[\cite{CES}]\label{thm-inv}
Let $D$ be a diagram of a link $L$ in $\mathbb R^3$ and
let $\phi$ be a biquandle $2$-cocycle. The state-sum $\Phi^B_\phi(D;A)$ of $D$ (associated with $\phi$) is an invariant of $L$. (It is denoted by $\Phi^B_\phi(L;A)$.)  \qed
\end{theorem}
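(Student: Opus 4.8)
The plan is to show that the state-sum $\Phi^B_\phi(D;A)$ is unchanged under each of the oriented Reidemeister moves $R_1$, $R_2$, $R_3$; since a minimal generating set of oriented Reidemeister moves suffices (cf. the remarks following Definition \ref{defn-biqdle}), invariance under this set gives invariance under all moves, and hence $\Phi^B_\phi(D;A)$ depends only on the link type $L$. The basic mechanism is the one exploited for the quandle cocycle invariant: a move induces a bijection between the biquandle $X$-colorings of the two diagrams (this is the content of the coloring correspondence implicit in the interpretation of colorings as ${\rm Hom}(BQ(D),X)$, which is a link invariant), and under this bijection the product of Boltzmann weights over all crossings is preserved. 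Because the two diagrams agree outside the disk $B$ where the move is performed, the weights at crossings lying outside $B$ match automatically, so one only needs to compare the contributions of the crossings inside $B$, for a pair of colorings that correspond under the move.

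First I would handle $R_1$. A kink introduces one new crossing $c_0$; by condition (B1), the two inputs to that crossing under any coloring carry equal colors, say $a=\mathcal C(u_1)=\mathcal C(o_1)$ (this uses that at a type-I kink the under-semi-arc and over-semi-arc meeting the source region are forced to the same element). The weight contributed is $\phi(a,a)^{\pm 1}$, which is trivial in $A$ by part (i) of the biquandle $2$-cocycle condition (Lemma \ref{rmk-bq2cocy}); moreover removing the kink removes no other weights and changes no colors elsewhere, so the product is unchanged. Next, $R_2$: the direct type-II move introduces two crossings of opposite sign. Under a coloring, the colors entering on the left reappear on the right, and the two new weights are $\phi(a,b)^{+1}$ and $\phi(a,b)^{-1}$ for the same pair $(a,b)$ (this is exactly the computation encoded in axioms (B2) and the figure \ref{fig-clcol}); their product is $1 \in A$. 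One must also check the reverse type-II move, where axiom (B3) guarantees the coloring correspondence is bijective and the same cancellation occurs. Both $R_1$ and $R_2$ are short because the weights literally cancel.

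The substantive step is $R_3$, the oriented type-III move with all positive crossings (as in Fig.~\ref{fig-r12}). Here both sides have three crossings. Labelling the three incoming semi-arc colors by $a,b,c$ and tracking the biquandle operations through the two triangular configurations, the three weights on one side are $\phi(b,c)\,\phi(a,b)\,\phi(\lt{a}{b},\ut{c}{b})$ and on the other side $\phi(a,c)\,\phi(\ut{b}{a},\ut{c}{a})\,\phi(\lt{a}{c},\lt{b}{c})$. Equality of these two products in $A$ is precisely part (ii) of the biquandle $2$-cocycle condition from Lemma \ref{rmk-bq2cocy}. So the proof reduces to (a) verifying that the colors appearing at the six crossings of the two sides of the $R_3$ move are exactly the six arguments named in that identity — a bookkeeping exercise with Fig.~\ref{fig-r12} and the coloring rules of Definition \ref{defn-qc-1} — and (b) invoking condition (ii). I expect the main obstacle to be this careful matching of labels at the type-III move: one must be scrupulous about which semi-arc is the ``source'' at each crossing, about the signs/exponents, and about the fact that axiom (B4) ensures the under-strand colors agree on the two sides so that the coloring bijection is well defined; once the dictionary between the picture and the algebraic cocycle condition is set up correctly, the remaining type-III cases (other sign patterns) follow either by the same computation or by reduction to the all-positive case together with the already-established $R_1$, $R_2$ invariance, and no genuinely new identity is needed.
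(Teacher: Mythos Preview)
Your argument is correct and is the standard proof of this result. Note, however, that the paper does not give its own proof of this theorem: it is stated as a result of \cite{CES} and marked with \qed, so there is nothing to compare against beyond the observation that your outline is precisely the argument one finds in \cite{CES} (and is the natural one suggested by the biquandle $2$-cocycle conditions in Lemma~\ref{rmk-bq2cocy}).
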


The state-sum invariant $\Phi^B_\phi(L;A)$ is also called the {\it biquandle cocycle invariant} of $L$.  
We say that the state-sum invariant $\Phi^B_\phi(L;A)$ of a knot or link $L$ is said to be {\it trivial} if it is an integer. In this case, the integer is equal to the number of biquandle $X$-colorings of a diagram $D$ of $L$, i.e., $\Phi^B_\phi(L;A)=\sharp{\rm Col}^B_X(D)$. 

The following proposition shows that 
the state-sum invariant $\Phi^B_\phi(L;A)$ depends on the cohomology class of a biquandle 2-cocycle $\phi$.  
It is verified by a similar argument as in \cite[Proposition 4.5]{CJKLS} and we omit the proof.  

\begin{proposition}\label{prop-cleq}
Let $\phi,\phi'\in Z_{\rm BQ}^2(X;A)$ be biquandle $2$-cocycles. If $\Phi^B_\phi$ and $\Phi^B_{\phi'}$ denote the state-sum invariants defined from cohomologous biquandle $2$-cocycles $\phi$ and $\phi'$ (so that $\phi=\phi'\delta\psi$ for some biquandle $1$-cochain $\psi$), then $\Phi^B_\phi=\Phi^B_{\phi'}$ (so that $\Phi^B_\phi(L;A)=\Phi^B_{\phi'}(L;A)$ for any oriented link $L$). In particular, the state-sum is trivial if the $2$-cocycle used for the Boltzman weight is a coboundary. \qed
\end{proposition}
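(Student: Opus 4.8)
The plan is to show that the state-sum is unchanged under replacing the $2$-cocycle $\phi$ by a cohomologous one $\phi' = \phi \cdot \delta\psi$, where $\psi : C_1^{\mathrm{BR}}(X) \to A$ is a biquandle $1$-cochain, i.e., a function $\psi : X \to A$ (with $A$ written multiplicatively, so coboundaries are products and the coefficient $-1$ in additive notation becomes inversion). First I would compute the coboundary explicitly: from the formula for $\partial_2$ one gets $\delta\psi(a,b) = \psi(\partial_2(a,b)) = \psi(a)^{-1}\psi(\ut{b}{a})\,\psi(b)\,\psi(\lt{a}{b})^{-1}$ — more precisely I would read off the sign pattern from $\partial_2(x_1,x_2) = -[(x_2) - (\ut{x_2}{x_1})] + [(x_1) - (\lt{x_1}{x_2})]$, being careful to get the exact exponents, but the key point is that it is built from the colors of the four semi-arcs incident to a crossing colored by $(a,b)$ in the source region.

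Next I would fix a biquandle $X$-coloring $\mathcal C$ of a diagram $D$ of $L$ and analyze the contribution of $\delta\psi$ to the weight product $\prod_{c \in C(D)} W^B_{\delta\psi}(c,\mathcal C)$. At each crossing $c$, the factor $W^B_{\delta\psi}(c,\mathcal C) = (\delta\psi)(a,b)^{\epsilon(c)}$ is a product of terms $\psi(\mathcal C(s))^{\pm 1}$ over the four semi-arcs $s$ incident to $c$. The central claim is that when one multiplies these contributions over all crossings of $D$, every semi-arc appears with total exponent zero, so the whole product collapses to $1 \in A$. This is the standard "telescoping" argument: each semi-arc of $D$ has exactly two endpoints, each of which is a crossing; one should check (using the normal/co-orientation conventions of Fig.~\ref{fig-qd-rels} and Definition~\ref{defn-qc-1}) that a semi-arc entering a crossing as an "input" semi-arc ($s_1$ or $s_2$) contributes $\psi(\mathcal C(s))$ to one power, while leaving a crossing as an "output" semi-arc ($s_3$ or $s_4$) it contributes $\psi(\mathcal C(s))^{-1}$ to the opposite power, and the signs $\epsilon(c)$ are arranged so that these cancel in pairs. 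Hence $W^B_{\phi'}(c,\mathcal C) = W^B_\phi(c,\mathcal C)\cdot W^B_{\delta\psi}(c,\mathcal C)$ multiplied over all $c$ gives $\prod_c W^B_\phi(c,\mathcal C)$, and summing over the (coloring-set-preserving) set ${\rm Col}^B_X(D)$ yields $\Phi^B_{\phi'}(D;A) = \Phi^B_\phi(D;A)$.

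The main obstacle is the bookkeeping in the cancellation step: one must verify that the exponent of $\psi(\mathcal C(s))$ contributed by a semi-arc $s$ at its two endpoint-crossings really does sum to zero, taking careful account of (a) whether $s$ is an over- or under-arc at each endpoint, (b) which of the four roles $s_1,\dots,s_4$ it plays, (c) the crossing signs $\epsilon(c)$, and (d) the precise exponent pattern in $\delta\psi$. This is exactly the computation carried out in \cite[Proposition 4.5]{CJKLS} for quandles, and it goes through verbatim once one notes that the biquandle relations at a crossing (Fig.~\ref{fig-qd-rels}) assign to the four semi-arcs precisely the four arguments $a, b, \lt{a}{b}, \ut{b}{a}$ appearing in $\delta\psi$, with the input/output roles matching the $\pm 1$ exponents. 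For the last sentence of the proposition, I would specialize to $\phi = \delta\psi$ itself (so $\phi' \equiv 1$, the trivial cocycle): then every weight is $1$, so $\Phi^B_\phi(L;A) = \sum_{\mathcal C} 1 = \sharp{\rm Col}^B_X(D)$, an integer, i.e., the state-sum is trivial. Since this argument is a routine transcription of the quandle case, it is reasonable to state it briefly and refer to \cite{CJKLS}, which is what the authors indicate they will do.
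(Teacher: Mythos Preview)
Your proposal is correct and is precisely the argument the paper has in mind: the paper omits the proof entirely and simply refers to \cite[Proposition~4.5]{CJKLS}, which is exactly the telescoping cancellation you sketch. One small slip: in your first displayed formula for $\delta\psi(a,b)$ the exponents on $\psi(a)$ and $\psi(b)$ are swapped (from $\partial_2(a,b) = -(b) + (\ut{b}{a}) + (a) - (\lt{a}{b})$ one gets $\delta\psi(a,b) = \psi(a)\,\psi(\ut{b}{a})\,\psi(b)^{-1}\,\psi(\lt{a}{b})^{-1}$), but you already flag that the exponents need to be read off carefully, and the cancellation argument goes through with the correct signs.
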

   
Now let $\mathcal L$ be a surface-link in $\mathbb R^4$ and let $\mathcal B$ be a broken surface diagram of $\mathcal L$. Let $\tau$ be a triple point of $\mathcal B$ intersecting three sheets that have relative positions top, middle, and bottom with respect to the projection direction of $p:\mathbb R^4\rightarrow \mathbb R^3.$ The {\it sign} of the triple point $\tau$ is defined to be {\it positive} if the normals of top, middle, bottom sheets in this order match the orientation of the $3$-space $\mathbb R^3$. Otherwise, the {\it sign} of $\tau$ is defined to be {\it negative}. We use the right-handed rule convention for the orientation of $\mathbb R^3$.

Fix a finite biquandle $X$, an abelian group $A$ written multiplicatively, and a biquandle $3$-cocycle $\theta$. Let $R$ be the source region of a triple point $\tau$ in $\mathcal B$, that is, the octant from which all normal vectors of the three sheets point outwards. For a given biquandle $X$-coloring $\mathcal C$ of $\mathcal B$, let $a,b$ and $c$ be the colors of the bottom, middle and top sheets, respectively, that bound the source region $R$. Set $\epsilon(\tau)=1$ or $-1$ according as $\tau$ is positive or negative, respectively. The {\it (Boltzman) weight} $W^B_\theta(\tau,\mathcal C)$ at $\tau$ with respect to $\mathcal C$ is defined by $$W^B_\theta(\tau,\mathcal C)=\theta(a,b,c)^{\epsilon(\tau)}\in A.$$ For example, see Fig.~\ref{fig-dpc}, where $\epsilon(\tau)=1$.

\begin{figure}[ht]
\begin{center}
\resizebox{0.65\textwidth}{!}{%
  \includegraphics{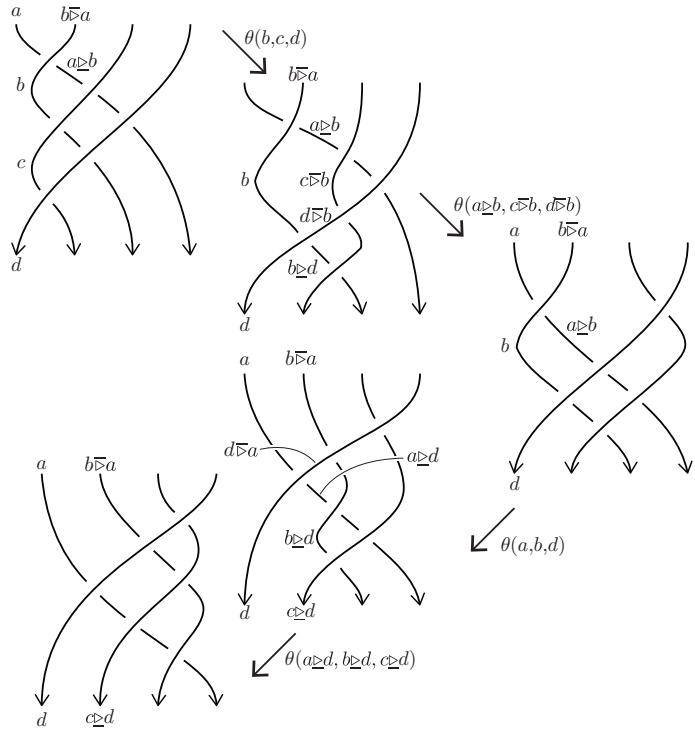}}
\caption{The tetrahedral move and a biquandle 3-cocycle condition (ii), LHS}\label{fig-slcol1}
\end{center}
\end{figure}

\begin{definition}\label{def-slci}
Let $\mathcal B$ be a broken surface diagram of a surface-link and let $\theta$ be a biquandle $3$-cocycle. The {\it state-sum} or {\it partition function} of $\mathcal B$ (associated with $\theta$) is defined to be the sum 
$$\Phi^B_\theta(\mathcal B; A)=\sum_{\mathcal C \in {\rm Col}^B_X(\mathcal B)}\prod_{\tau\in T(\mathcal B)}W^B_\theta(\tau,\mathcal C),$$ 
where $T(\mathcal B)$ denotes the set of all triple points of $\mathcal B$ and the state-sum $\Phi^B_\theta(\mathcal B; A)$ is an element of the group ring $\mathbb Z[A].$ 
\end{definition}

\begin{theorem}\label{thm-bqc-inv-bsd}
Let $\mathcal L$ be a surface-link and let $\mathcal B$ be a broken surface diagram of $\mathcal L$. For a given biquandle $3$-cocycle $\theta$, the state-sum $\Phi^B_\theta(\mathcal B; A)$ is an invariant of $\mathcal L$. 
(It is denoted by $\Phi^B_\theta(\mathcal L; A)$.)  
\end{theorem}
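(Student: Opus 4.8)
The plan is to prove invariance by showing that $\Phi^B_\theta(\mathcal B;A)$ does not change under any Roseman move. By Theorem~\ref{thm-RosM}, $\mathcal B$ and $\mathcal B'$ present equivalent surface-links if and only if they are related by a finite sequence of ambient isotopies of $\mathbb R^3$ and Roseman moves; ambient isotopies visibly preserve the triple points, their signs, their source regions and the colors, hence preserve the state-sum, so it suffices to treat a single Roseman move. Fix such a move, performed inside an open $3$-ball $E\subset\mathbb R^3$ with $\mathcal B\cap(\mathbb R^3-E)=\mathcal B'\cap(\mathbb R^3-E)$. By Theorem~\ref{thm-bsd-cnbr} there is a bijection $\mathcal C\mapsto\mathcal C'$ between ${\rm Col}^B_X(\mathcal B)$ and ${\rm Col}^B_X(\mathcal B')$ that is the identity on semi-sheets meeting $\mathbb R^3-E$. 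Every triple point lying outside $E$ then has the same bottom/middle/top sheets, the same sign, the same source octant, and the same colors for the pair $(\mathcal C,\mathcal C')$, so it contributes the same Boltzmann weight on both sides. Thus it remains only to verify, for each move and each coloring $\mathcal C$, the equality of the \emph{local} weight products $\prod_{\tau\in T(\mathcal B)\cap E}W^B_\theta(\tau,\mathcal C)=\prod_{\tau'\in T(\mathcal B')\cap E}W^B_\theta(\tau',\mathcal C')$; summing over colorings then gives $\Phi^B_\theta(\mathcal B;A)=\Phi^B_\theta(\mathcal B';A)$.

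I would then run through the seven Roseman moves. Most of them are supported on double-point curves and branch points and produce no triple points in $E$ at all; for these both local products are empty and there is nothing to check. Two types require the biquandle $3$-cocycle conditions of Lemma~\ref{rmk-bq3cocy}. First, the Roseman move that creates (or cancels) a pair of triple points: under the coloring bijection these two triple points acquire the \emph{same} triple of colors $(a,b,c)$ on their source octants but \emph{opposite} signs, so their weights $\theta(a,b,c)$ and $\theta(a,b,c)^{-1}$ multiply to $1$ and the local product is unchanged. Second, the Roseman move in which a sheet passes across a double curve adjacent to a branch point creates a triple point at which two of the three sheets bounding the source octant are forced, via the coloring rule $\mathcal C(u_2)=\lt{a}{b}$, $\mathcal C(o_2)=\ut{b}{a}$ of Fig.~\ref{fig-dpc}, to carry equal colors; by condition~(i) of Lemma~\ref{rmk-bq3cocy} (i.e.\ $\theta(a,a,b)=\theta(a,b,b)=1$ in the multiplicatively written group $A$) such a weight is trivial, so again the local product is unaffected.

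The crux is the tetrahedral move, the four-sheet analogue of the Reidemeister~III move, shown in Fig.~\ref{fig-slcol1}: each side carries four triple points, matching the four terms on each side of condition~(ii). Starting from colors $a,b,c,d$ on the four sheets that bound the source octant of the whole local configuration and propagating them through the double curves, one reads off, for each of the four triple points on a given side, which sheet is bottom/middle/top, which octant is its source region, and the sign $\epsilon(\tau)$ determined by the normal-vector orientation convention. This identifies the product of the four weights on one side with $\theta(b,c,d)\,\theta(a,b,d)\,\theta(\lt{a}{b},\ut{c}{b},\ut{d}{b})\,\theta(\lt{a}{d},\lt{b}{d},\lt{c}{d})$ and on the other side with $\theta(a,c,d)\,\theta(a,b,c)\,\theta(\ut{b}{a},\ut{c}{a},\ut{d}{a})\,\theta(\lt{a}{c},\lt{b}{c},\ut{d}{c})$, so their equality is precisely the biquandle $3$-cocycle condition~(ii). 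The remaining Roseman moves permuting the relative heights of sheets near a triple point are handled identically, using condition~(ii) (or its specializations) or reducing to the cancellation argument above.

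The step I expect to be the main obstacle is exactly this last bookkeeping for the tetrahedral move: matching the eight terms of condition~(ii), \emph{with the correct exponents} $\pm 1$, to the eight triple points on the two sides, which demands a consistent choice of orientation and co-orientation data on all four sheets, a correct identification of the source octant and relative heights at each triple point, and care that the two local colorings agree off $E$ under the bijection of Theorem~\ref{thm-bsd-cnbr}. Once the local diagrams are drawn with all normals specified (as in Fig.~\ref{fig-dpc} and Fig.~\ref{fig-slcol1}), this is a finite, if somewhat lengthy, verification; the reduction to local products, the branch-point degeneracy, and the sign cancellations are routine.
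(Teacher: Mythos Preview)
Your approach is essentially that of the paper: reduce to a single Roseman move, invoke the coloring bijection of Theorem~\ref{thm-bsd-cnbr}, observe that moves of types~1--4 involve no triple points, that type~5 (branch point through a sheet) contributes a degenerate weight $\theta(a,a,b)^{\pm1}$ or $\theta(a,b,b)^{\pm1}$ killed by Lemma~\ref{rmk-bq3cocy}\,(i), that type~6 produces a cancelling pair of opposite sign, and that type~7 (the tetrahedral move) is exactly condition~(ii). Two small remarks: there are no ``remaining Roseman moves'' beyond these seven, so your last sentence is superfluous; and in type~5 the two equal colors at the new triple point arise because the two local semi-sheets of the Whitney umbrella facing the source octant are in fact the \emph{same} semi-sheet (they connect around the branch point), rather than being forced equal by the double-curve coloring rule itself.
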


\begin{proof}
Suppose that $\mathcal B'$ is a broken surface diagram obtained from $\mathcal B$ by a single Roseman move. For each biquandle $X$-coloring $\mathcal C$ of $\mathcal B$, let $\mathcal C'$ be the corresponding biquandle $X$-coloring of $\mathcal B'$ as in the proof of Theorem \ref{thm-bsd-cnbr}. By Theorems \ref{thm-RosM} and \ref{thm-bsd-cnbr}, it suffices to prove that $\prod_{\tau \in T(\mathcal B)}W^{\rm B}_\theta(\tau,\mathcal C)=\prod_{\tau \in T(\mathcal B')}W^{\rm B}_\theta(\tau,\mathcal C')$. From Fig.~\ref{fig-rose}, it is immediate that the Roseman moves of type 1, 2, 3 and 4 involve no triple points. For the Roseman move of type 5, the product of weights differ by $\theta(x_1,x_1,x_2)^{\pm1}$ or $\theta(x_1,x_2,x_2)^{\pm1}$ for some $x_1,x_2 \in X$. But it follows from Lemma~\ref{rmk-bq3cocy} (i) that $\theta(x_1,x_1,x_2)^{\pm1}=\theta(x_1,x_2,x_2)^{\pm1}=1$ and hence the product of the weights are unchanged. For the Roseman move of type 6, two triple points with the same weights of opposite signs are involved and the product of the weights are canceled. For the Roseman move of type 7, there are four involved triple points before and after the move as illustrated in Figs.~\ref{fig-slcol1} and \ref{fig-slcol2} in motion pictures (the tetrahedral move). From Lemma~\ref{rmk-bq3cocy} (ii), it is seen that the product of the weights are not changed. 
\end{proof}

\begin{figure}[ht]
\begin{center}
\resizebox{0.65\textwidth}{!}{%
  \includegraphics{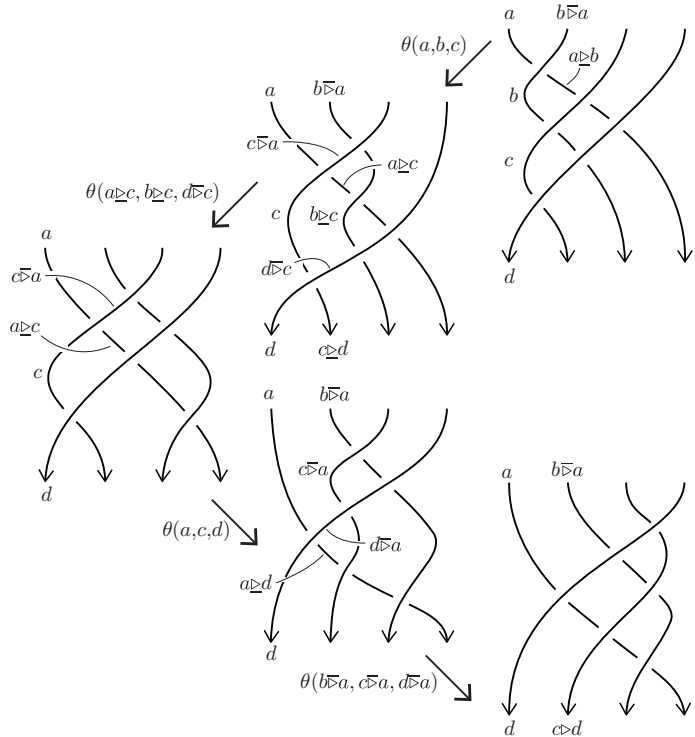}}
\caption{The tetrahedral move and a biquandle 3-cocycle condition (ii), RHS}\label{fig-slcol2}
\end{center}
\end{figure}

In what follows we also call the state-sum invariant $\Phi^B_\theta(\mathcal L; A)$ the {\it biquandle cocycle invariant} of $\mathcal L$.  The following proposition shows that $\Phi^B_\theta(\mathcal B; A)$ depends on the cohomology class of a biquandle 3-cocycle $\theta$ and is proved by a similar argument as in \cite[Proposition 5.7]{CJKLS}.

\begin{proposition}\label{prop-sleq}
Let $\mathcal B$ be a broken surface diagram of a surface-link $\mathcal L$ and let
$\theta$ and $\theta'$ be two cohomologous biquandle $3$-cocycles (so that $\theta=\theta'\delta\phi$ for some biquandle $2$-cochain $\phi$). Then $\Phi^B_\theta(\mathcal B; A)=\Phi^B_{\theta'}(\mathcal B; A)$ and consequently $\Phi^B_\theta(\mathcal L; A)=\Phi^B_{\theta'}(\mathcal L; A)$ for any surface-link $\mathcal L$. In particular, if $\theta$ is a coboundary, then $\Phi^B_\theta(\mathcal L; A)$ is trivial for all surface-link $\mathcal L$, i.e., $\Phi^B_\theta(\mathcal L; A)=\sharp{\rm Col}^B_X(\mathcal B)$.\qed
\end{proposition}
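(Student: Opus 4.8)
The plan is to fix a broken surface diagram $\mathcal{B}$ of $\mathcal{L}$ and compare the two state-sums coloring by coloring. Since $\theta$ and $\theta'$ are cohomologous, write $\theta=\theta'\,\delta\phi$ with $\phi$ a biquandle $2$-cochain; because $A$ is written multiplicatively this means $\theta(a,b,c)=\theta'(a,b,c)\cdot(\delta\phi)(a,b,c)$ for all $a,b,c\in X$. Fix $\mathcal{C}\in{\rm Col}^B_X(\mathcal{B})$; for a triple point $\tau$ with bottom, middle, top source colors $a_\tau,b_\tau,c_\tau$ one then has $W^B_\theta(\tau,\mathcal{C})=W^B_{\theta'}(\tau,\mathcal{C})\cdot(\delta\phi)(a_\tau,b_\tau,c_\tau)^{\epsilon(\tau)}$, so that
$$\prod_{\tau\in T(\mathcal{B})}W^B_\theta(\tau,\mathcal{C})=\bigl(\prod_{\tau\in T(\mathcal{B})}W^B_{\theta'}(\tau,\mathcal{C})\bigr)\cdot\prod_{\tau\in T(\mathcal{B})}(\delta\phi)(a_\tau,b_\tau,c_\tau)^{\epsilon(\tau)}.$$
It therefore suffices to show that the last factor equals $1$ for every $\mathcal{C}$; summing over $\mathcal{C}$ gives $\Phi^B_\theta(\mathcal{B};A)=\Phi^B_{\theta'}(\mathcal{B};A)$, and since by Theorem~\ref{thm-bqc-inv-bsd} this common value is the invariant of $\mathcal{L}$, the equality $\Phi^B_\theta(\mathcal{L};A)=\Phi^B_{\theta'}(\mathcal{L};A)$ follows at once.

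To kill the last factor I would introduce the $3$-chain $z_{\mathcal{C}}=\sum_{\tau\in T(\mathcal{B})}\epsilon(\tau)\,(a_\tau,b_\tau,c_\tau)$, viewed in $C_3^{\rm BQ}(X)$. Regarding $\delta\phi$ via multiplicativity as a homomorphism $C_3^{\rm BQ}(X)\to A$, the last factor above is precisely $(\delta\phi)(z_{\mathcal{C}})=\phi(\partial_3 z_{\mathcal{C}})$, so the claim reduces to showing that $z_{\mathcal{C}}$ is a cycle, i.e. $\partial_3 z_{\mathcal{C}}=0$ in $C_2^{\rm BQ}(X)$; this is the surface analogue of the statement that a biquandle-colored link diagram represents a $2$-cycle. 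To prove it, expand $\partial_3(a_\tau,b_\tau,c_\tau)$ by the boundary formula of Section~\ref{sect-bqht}: the six resulting ordered pairs are exactly the colors read off the six half-branches of double-point curves emanating from $\tau$ (the projections of the three pairwise sheet intersections, each split by $\tau$ into two rays), each pair consisting of the source-side under-color and the over-color along that half-branch, as one checks from the coloring rules of Section~\ref{sect-bqcs} together with the biquandle axioms (B1)--(B4). The double-point curves of $\mathcal{B}$ form a $1$-complex whose only singular points are the six-valent triple points and the one-valent branch points, and along each of its edges the pair (source-side under-color, over-color) is constant; hence the two half-branches at the ends of an edge joining two triple points contribute opposite-signed equal pairs to $\partial_3 z_{\mathcal{C}}$ and cancel, an edge ending at a branch point contributes a degenerate pair $(a,a)$ which vanishes in $C_2^{\rm BQ}(X)=C_2^{\rm BR}(X)/C_2^{\rm BD}(X)$, and edges that are loops contribute to no triple point. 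Therefore $\partial_3 z_{\mathcal{C}}=0$, so $(\delta\phi)(z_{\mathcal{C}})=\phi(0)=1$, as required.

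For the final assertion, take $\theta'$ to be the trivial $3$-cocycle sending every triple to the identity of $A$; it is cohomologous to any coboundary $\theta=\delta\phi$, so $\Phi^B_\theta(\mathcal{B};A)=\Phi^B_{\theta'}(\mathcal{B};A)$ by the above. Since $W^B_{\theta'}(\tau,\mathcal{C})=1$ for every $\tau$ and $\mathcal{C}$, we get $\Phi^B_{\theta'}(\mathcal{B};A)=\sum_{\mathcal{C}\in{\rm Col}^B_X(\mathcal{B})}1=\sharp{\rm Col}^B_X(\mathcal{B})\in\mathbb{Z}\subset\mathbb{Z}[A]$, i.e. the invariant is trivial.

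The step I expect to be the main obstacle is the identification, at a fixed triple point $\tau$, of the six algebraic terms of $\partial_3(a_\tau,b_\tau,c_\tau)$ with the six geometric half-branches of double-point curves at $\tau$, with the correct semi-sheet colors and the correct orientation-induced signs. This bookkeeping is the biquandle (two-operation) analogue of the computation in \cite[Proposition 5.7]{CJKLS}, and essentially repeats the local analysis already used for the tetrahedral (type 7) Roseman move in the proof of Theorem~\ref{thm-bqc-inv-bsd}; one must in particular be careful that it is the degenerate pairs $(a,a)$ occurring at branch points that force the quotient by $C_2^{\rm BD}(X)$, rather than merely $C_2^{\rm BR}(X)$, to be used.
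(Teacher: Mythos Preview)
Your proposal is correct and follows exactly the approach the paper indicates: the paper does not give its own proof but defers to \cite[Proposition~5.7]{CJKLS}, whose argument is precisely the one you outline---show that the signed sum $z_{\mathcal C}=\sum_\tau \epsilon(\tau)(a_\tau,b_\tau,c_\tau)$ is a $3$-cycle in $C_*^{\rm BQ}(X)$ by matching the six terms of $\partial_3(a_\tau,b_\tau,c_\tau)$ with the six double-point half-branches at $\tau$ and cancelling along edges (with branch-point ends contributing degenerate pairs). Your identification of this bookkeeping as the main obstacle, and your remark that the quotient by $C_2^{\rm BD}(X)$ is needed exactly for the branch points, are both on target.
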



\section{Biquandle cocycle invariants from marked graph\\ diagrams}\label{sect-qcocm}

In this section we give an interpretation of the biquandle $3$-cocycle invariants of surface-links in terms of marked graph diagrams. Let $D$ be a marked graph diagram and let $D_+$ be the positive resolution of $D.$ Let $D_+=D_1 \rightarrow D_2 \rightarrow\cdots\rightarrow D_r=O$ be a sequence of link diagrams from $D_+$ to a trivial link diagram $O$ related by ambient isotopies of $\mathbb R^2$ and Reidemeister moves. Let $$I^3_+=\{i \mid D_{i}\rightarrow D_{i+1} \text{ is a Reidemeister move } R_3\}.$$ For each $i \in I^3_+$, let $B_{(i)}$ be a 2-disk in $\mathbb R^2$ where the move $D_i\rightarrow D_{i+1}$ is applied. Similarly, let $D_-$ be the negative resolution of $D$ and let $D_-=D_1' \rightarrow D_2' \rightarrow\cdots\rightarrow D_s'=O'$ be a sequence of link diagrams from $D_-$ to a trivial link diagram $O'$ related by ambient isotopies of $\mathbb R^2$ and Reidemeister moves. Let $$I^3_-=\{j \mid D_{j}'\rightarrow D_{j+1}' \text{ is a Reidemeister move } R_3\}.$$  For each $j \in I^3_-$, let $B_{(j)}'$ be a 2-disk in $\mathbb R^2$ where the move $D_{j}'\rightarrow D_{j+1}'$ is applied.

Let $i\in I^3_+$ (resp., $j\in I^3_-$).   We note that there exists a unique region in $D_i$ or $D_{i+1}$ (resp., $D_j'$ or $D_{j+1}'$) facing three semi-arcs (the bottom, middle, top semi-arcs) such that all co-orientations of the semi-arcs point outward as depicted in Fig.~\ref{fig-lab1} where the regions are shaded by the blue color. We call the (blue) region the {\it source region of the stage} $i$ (resp., $j$). We define two sign functions $\epsilon_{tm}$ and $\epsilon_{b}$ from the disjoint union $I^3_+ \amalg {I^3_-}$ to $\{\pm1\}$ as follows: Let $i\in I^3_+$ (or $i\in I^3_-$, resp.) and let $c_i$ be the crossing between the top arc and the two middle arcs in  $D_{i}\cap B_{(i)}$ (or $D_{i}'\cap B_{(i)}'$, resp.) 
and let $n_b$ be the co-orientation of the bottom arc. Define $\epsilon_{tm}(i)$ and $\epsilon_{b}(i)$ for $i\in I^3_+\amalg I^3_-$ by
\begin{align}
&\epsilon_{tm}(i)={\rm sign}(c_i),\label{eq-etmi}\\
&\epsilon_{b}(i)=\begin{cases}
1 \hskip 0.9cm \text{if $n_b$ points from }c_i,\\
-1 \hskip 0.6cm \text{otherwise}.~\text{(cf. Fig.~\ref{fig-signft}.)}
\end{cases}\label{eq-ebi}
\end{align}

\begin{figure}[ht]
\begin{center}
\resizebox{0.50\textwidth}{!}{%
  \includegraphics{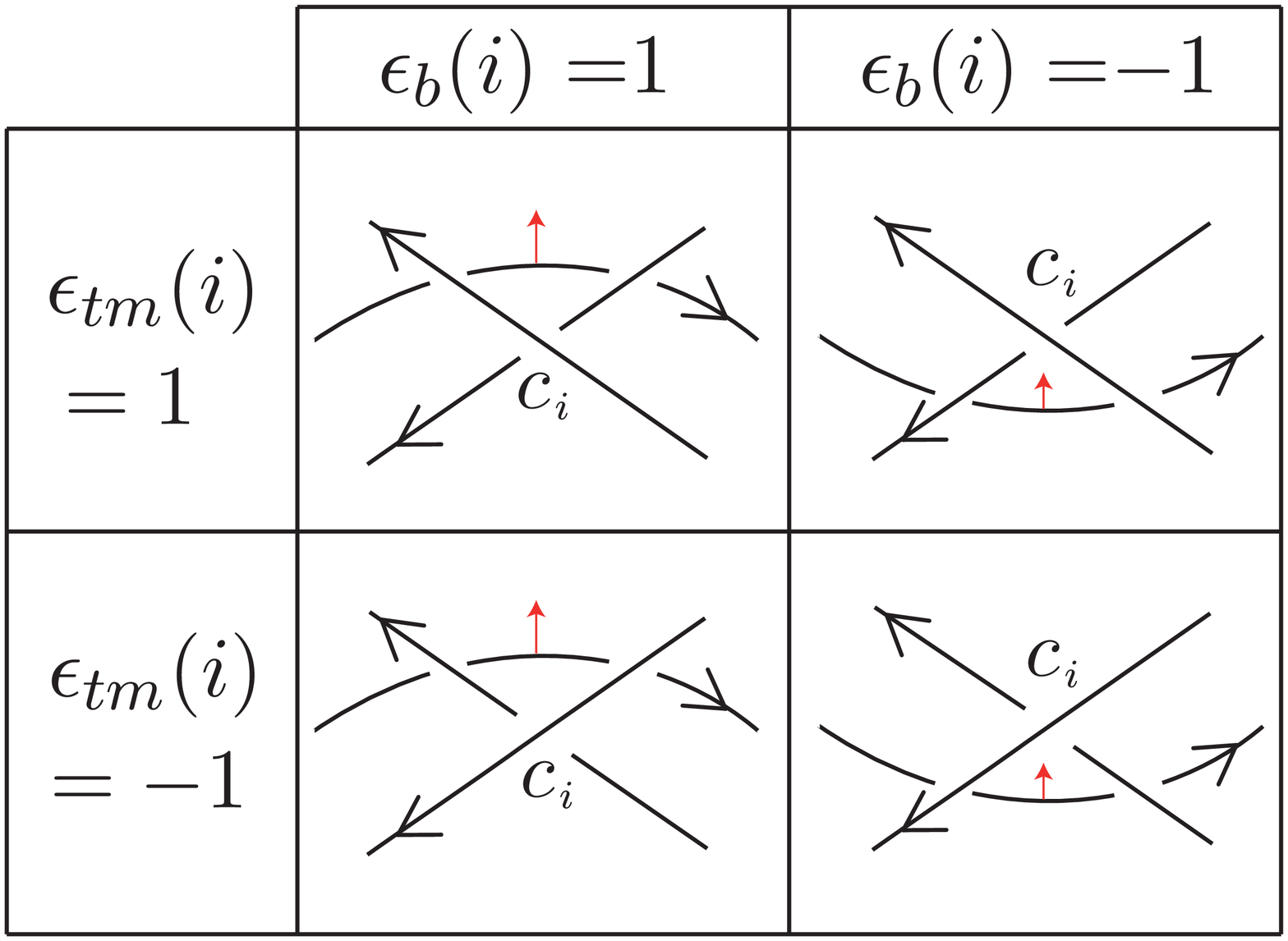} }
\caption{The sign functions $\epsilon_{tm}$ and $\epsilon_{b}$}\label{fig-signft}
\end{center}
\end{figure}

Let $D$ be a marked graph diagram, let $\mathcal{C}:S(D)\rightarrow X$ a biquandle $X$-coloring of $D$, and let $\theta$ be a biquandle $3$-cocycle.
Set $i\in I^3_+ \amalg {I^3_-}$. Let $R$ be the source region of the stage $i$.  The {\it(Boltzman) weight} $W^B_\theta(i,\mathcal{C})$ at $i$ with respect to $\mathcal C$ is defined by $$W^B_\theta(i,\mathcal{C})=\theta(x_1,x_2,x_3)^{\epsilon_{tm}(i)\epsilon_{b}(i)},$$ where $x_1, x_2$ and $x_3$ are the colors of the bottom, middle and top semi-arcs facing the source region $R$ at the stage $i$, respectively, as depicted in Fig.~\ref{fig-lab1}.

 \begin{figure}[ht]
\begin{center}
\resizebox{0.7\textwidth}{!}{%
  \includegraphics{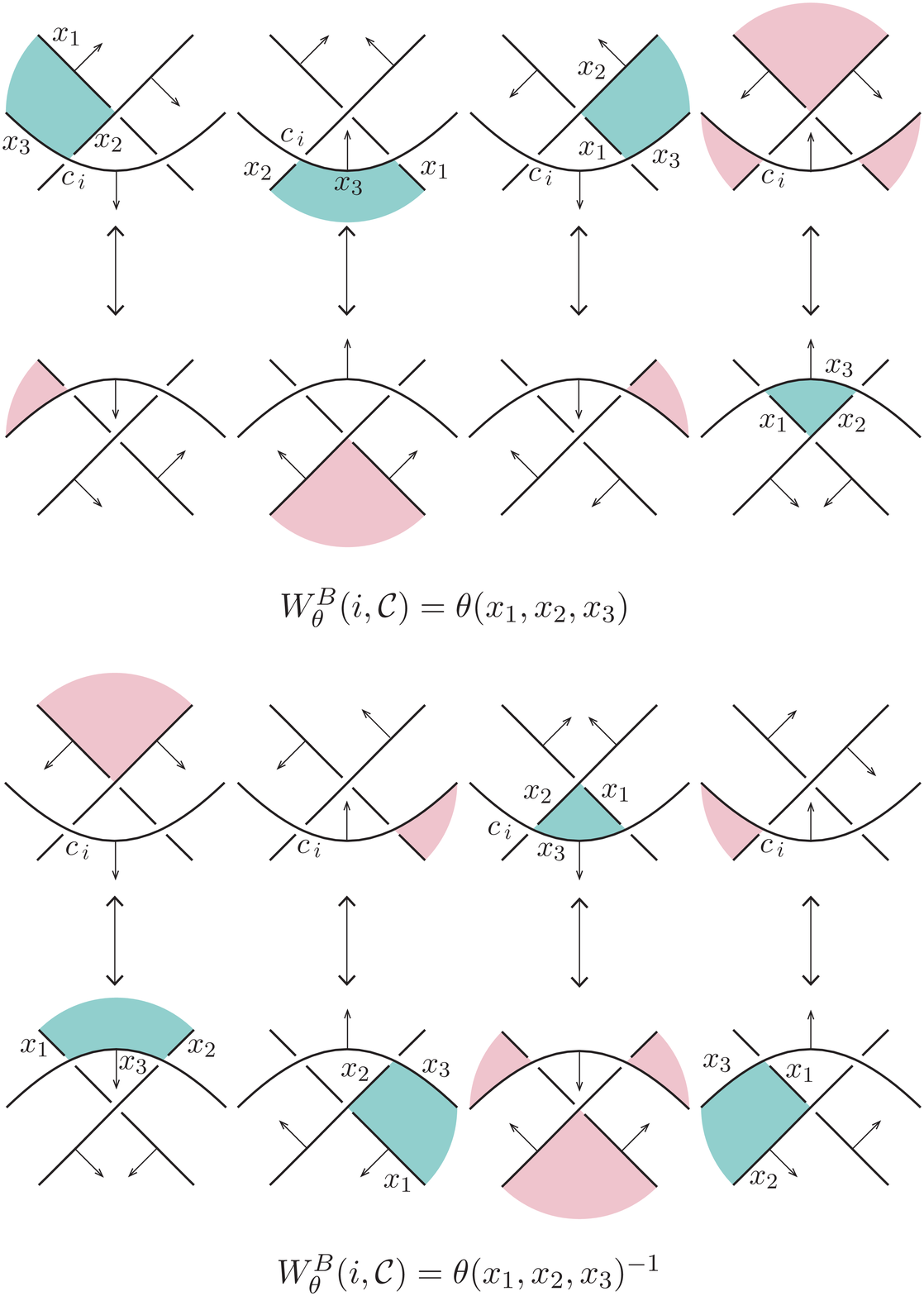} }
\caption{Boltzman weight at $i\in I^3_+ \amalg {I^3_-}$}\label{fig-lab1}
\end{center}
\end{figure}

\begin{definition}\label{def-cocyclem}
Let $D$ be a marked graph diagram of a surface-link $\mathcal L.$ 
For a given biquandle 3-cocycle $\theta$, the {\it state-sum} or {\it partition function} of $D$ (associated with $\theta$) is defined by
\begin{align*}
\Phi^B_\theta(D; A)=\sum_{\mathcal{C}\in {\rm Col}^B_X(D)}\biggl(\prod_{i\in I^3_+ }W^B_\theta(i,\mathcal{C})\prod_{j\in I^3_- }W^B_\theta(j,\mathcal{C})^{-1}\biggr).\end{align*}
\end{definition}

\begin{theorem}\label{thm-cocyclem}
Let $\mathcal L$ be a surface-link and let $D$ be a marked graph diagram of $\mathcal L$. Then for any biquandle $3$-cocycle $\theta,$ $\Phi^B_\theta(\mathcal L; A)=\Phi^B_\theta(D; A)$. 
\end{theorem}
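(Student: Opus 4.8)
The plan is to show that the marked-graph state-sum $\Phi^B_\theta(D;A)$ agrees with the broken-surface state-sum $\Phi^B_\theta(\mathcal{B}(D);A)$, where $\mathcal{B}(D)$ is the broken surface diagram associated with $D$ constructed in Section~\ref{sect-mgd}; Theorem~\ref{thm-bqc-inv-bsd} then gives $\Phi^B_\theta(\mathcal{B}(D);A)=\Phi^B_\theta(\mathcal{L};A)$. By Theorem~\ref{thm-col}, the biquandle $X$-colorings of $D$ and of $\mathcal{B}(D)$ are in natural bijection (both being identified with $\mathrm{Hom}(BQ(D),X)$), so it suffices to prove, for each $X$-coloring $\mathcal{C}$ of $D$ with corresponding coloring $\mathcal{C}'$ of $\mathcal{B}(D)$, the equality of Boltzmann products
\begin{equation*}
\prod_{i\in I^3_+}W^B_\theta(i,\mathcal{C})\prod_{j\in I^3_-}W^B_\theta(j,\mathcal{C})^{-1}=\prod_{\tau\in T(\mathcal{B}(D))}W^B_\theta(\tau,\mathcal{C}').
\end{equation*}

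First I would recall the structure of $\mathcal{B}(D)$: outside a neighborhood of $\mathbb{R}^3\times\{0\}$ (the saddle level), $\mathcal{B}(D)$ is built from the ``movie'' $F'$, which is in turn assembled from the homeomorphisms $F^{(i)}$ and $G^{(j)}$ realizing the Reidemeister-move sequences $D_+=D_1\to\cdots\to D_r=O$ and $D_-=D_1'\to\cdots\to D_s'=O'$. The key geometric observation (due to Ashihara) is that triple points of $\mathcal{B}(D)$ arise \emph{only} from the sets $M_{(i)}$ corresponding to $R_3$ moves — i.e.\ only from stages $i\in I^3_+$ and $j\in I^3_-$ — since the pieces $M_{(i)}$ for $R_1$ and $R_2$ moves (Figs.~\ref{fig-m1},~\ref{fig-m2}) and the product pieces $L_t^\pm\times\{t\}$ contain no triple points, and likewise the cap disks $B_k$, $B_k'$ and the saddle level contribute none. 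Each $R_3$ stage contributes exactly one triple point to $\mathcal{B}(D)$. Thus the set $T(\mathcal{B}(D))$ is canonically in bijection with $I^3_+\amalg I^3_-$.

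The heart of the argument is then a local computation at each $R_3$ stage: one must check that the sign $\epsilon(\tau)$ of the associated triple point and the colors $(a,b,c)$ of its bottom/middle/top sheets bounding the source octant $R$ match, respectively, $\epsilon_{tm}(i)\epsilon_{b}(i)$ (for $i\in I^3_+$) or $-\epsilon_{tm}(j)\epsilon_{b}(j)$ (for $j\in I^3_-$, where the extra sign comes from the time-reversal in the construction of $G^{(j)}$ versus $F^{(i)}$, equivalently from the $t\in(-2,-1)$ versus $t\in(1,2)$ part of $F'$) and the colors $(x_1,x_2,x_3)$ of the bottom/middle/top \emph{semi-arcs} facing the source region of stage $i$ (resp.\ $j$). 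This is precisely what the definitions~\eqref{eq-etmi} and~\eqref{eq-ebi} of $\epsilon_{tm}$ and $\epsilon_b$, together with Fig.~\ref{fig-signft} and Fig.~\ref{fig-lab1}, are designed to encode: $\epsilon_{tm}(i)$ records whether the top-over-middle crossing is positive or negative (which determines the handedness contribution of the top and middle sheet normals), and $\epsilon_b(i)$ records whether the bottom-sheet normal points toward or away from that crossing (the remaining handedness datum). Since the co-orientations of the semi-arcs lift to the normals of the corresponding sheets in $\mathcal{B}(D)$, the source region of stage $i$ lifts to (a region adjacent to) the source octant of $\tau$, and the three semi-arc colors are exactly the colors of the three sheets through $\tau$ on the source side. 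A short case check — positive versus negative $R_3$ move, and the four sub-cases of co-orientation of the bottom arc, done once for $I^3_+$ and once with the time-reversal for $I^3_-$ — establishes $W^B_\theta(\tau,\mathcal{C}')=W^B_\theta(i,\mathcal{C})$ (resp.\ $=W^B_\theta(j,\mathcal{C})^{-1}$).

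I expect the main obstacle to be the bookkeeping of orientations and signs in this local check: one must be careful that the co-orientation convention on marked-graph-diagram semi-arcs, the rule assigning normals $\vec{n}$ to sheets of a broken surface diagram so that $(\vec{v}_1,\vec{v}_2,\vec{n})$ is positively oriented, and the right-handed convention defining the sign of a triple point all fit together consistently through the construction of $F'$ — in particular through the $1$-parameter families $f_t^{(i)}$, $g_t^{(j)}$ and the reparametrizations $\phi$, $\psi$, and through the extra orientation-reversal between the ``upper'' ($t>0$) and ``lower'' ($t<0$) halves of the movie that is responsible for the inverse exponent on the $I^3_-$ factors in Definition~\ref{def-cocyclem}. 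Once the single-stage identification of $(\epsilon(\tau),a,b,c)$ with $(\epsilon_{tm}(i)\epsilon_b(i),x_1,x_2,x_3)$ is pinned down by inspecting Figs.~\ref{fig-m3},~\ref{fig-signft} and~\ref{fig-lab1}, the products agree term by term and the theorem follows; this is entirely parallel to the quandle case treated in \cite{KKL}, the only new input being that semi-arcs rather than arcs carry the labels and that the biquandle $3$-cocycle condition of Lemma~\ref{rmk-bq3cocy} replaces the quandle one (though for this theorem the cocycle condition is not even needed — only the diagrammatic identification is).
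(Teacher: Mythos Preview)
Your proposal is correct and follows essentially the same approach as the paper: reduce to the associated broken surface diagram $\mathcal{B}(D)$, use the bijection of colorings from Theorem~\ref{thm-col}, identify $T(\mathcal{B}(D))$ with $I^3_+\amalg I^3_-$ since only $R_3$ stages produce triple points, and then verify locally that $\epsilon(\tau_i)=\epsilon_{tm}(i)\epsilon_b(i)$ for $i\in I^3_+$ and $\epsilon(\tau_j')=-\epsilon_{tm}(j)\epsilon_b(j)$ for $j\in I^3_-$ while the source-region colors match. The paper carries out exactly this sign computation by normalizing the sheet normals to $\bar n_t={\bf e}_1$, $\bar n_m=\epsilon_{tm}\,{\bf e}_2$, $\bar n_b=\pm\epsilon_b\,{\bf e}_3$ and reading off $\epsilon(\tau)$ from the handedness of this triple, which is precisely the case check you anticipate.
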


\begin{proof}
Let $\mathcal B=\mathcal{B}(D)$ be a broken surface diagram associated with $D$ defined in Section \ref{sect-mgd}. By Theorem \ref{thm-bqc-inv-bsd}, it is sufficient to prove that $\Phi^B_\theta(D)=\Phi^B_\theta(\mathcal B)$. 
Since there is a natural bijection between  
${\rm Col}^B_X(D)$ and ${\rm Col}^B_X(\mathcal B)$ (see Theorem~\ref{thm-col}), it 
suffices to show the following claim.  

\smallskip
 {\bf Claim}: For each biquandle $X$-coloring $\mathcal C\in {\rm Col}^B_X(D),$
 $$\prod_{i\in I^3_+ }W^B_\theta(i,\mathcal{C})\prod_{j\in I^3_- }W^B_\theta(j,\mathcal{C})^{-1}=\prod_{\tau\in T(\mathcal B)}W^B_\theta(\tau,\tilde{\mathcal C}),$$
 where $\tilde{\mathcal C}\in {\rm Col}^B_X(\mathcal B)$, which corresponds to the biquandle $X$-coloring $\mathcal C$.  
 
 \bigskip
{\it{\bf Proof of Claim.}}    Let $\mathcal B^i_j=\mathcal B\cap (\mathbb R^2\times [t_j',t_i])$ for $i=1, \ldots, r$ and $j=1, \ldots, s.$  Let $\phi:(\mathbb R^2,D_0)\rightarrow (\mathbb R^2\times [t_1',t_1], \mathcal B^1_1)$ be the natural embedding at $t=0$ as shown, for example, in Fig.~\ref{fig-ne}. The vertices of $D_0$ correspond to the saddle points in $\mathcal B^1_1$ and the crossings of  $D_0$ correspond to the intersection of $\mathbb R^2 \times \{0\}$ and the double point curves in $\mathcal B^1_1$. There are no triple points in $\mathcal B^1_1$. 
   
 \begin{figure}[ht]
\begin{center}
\resizebox{0.7\textwidth}{!}{%
  \includegraphics{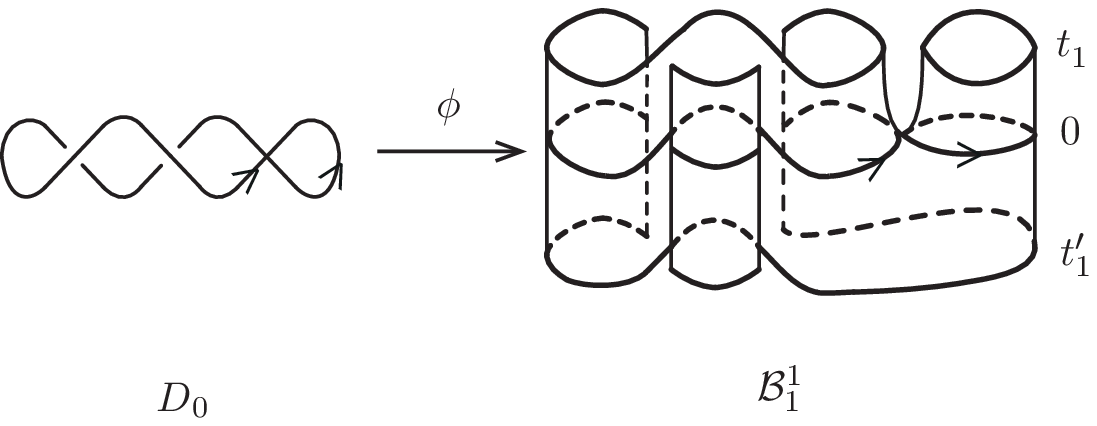} }
\caption{$\phi:(\mathbb R^2,D_0)\rightarrow (\mathbb R^2\times [t_1',t_1], \mathcal B^1_1)$}\label{fig-ne}
\end{center}
\end{figure}

Let $\mathcal B_i=\mathcal B\cap(\mathbb R^2\times [t_{i},t_{i+1}])$ for $i=1,\ldots, r-1$ and 
$\mathcal B_{j}'=\mathcal B\cap(\mathbb R^2\times [t_{j+1}',t_j'])$ for  $j=1,\ldots, s-1.$ Note that 
$T(\mathcal B)=\Big(\overset{r-1}{\underset{i=1}{\cup}}T(\mathcal B_i)\Big)\cup\Big(\overset{s-1}{\underset{j=1}{\cup}}T(\mathcal B_j')\Big)$, where $T(\cdot)$ stands for the set of triple points.  

If the move $D_i\rightarrow D_{i+1}$ is an ambient isotopy of $\mathbb R^2$, then $D_{i}\times [t_{i},t_{i+1}]\cong \mathcal B_i$, and there are no triple points in $\mathcal B_i.$ Suppose that the move $D_i\rightarrow D_{i+1}$ is a  Reidemeister move. Since $D_{i}\setminus B_{(i)}$ and $D_{i+1}\setminus B_{(i)}$ are identical, there are no triple points in $\mathcal B_i\setminus M_{(i)}$ and we have $T(\mathcal B_i)=T(M_{(i)}),$ where $M_{(i)}$ is a subset of $B_{(i)}\times I$ determined by $\pi(M_{(i)}\cap (B_{(i)}\times \{t\}))=\pi(f_t^{(i)}(L(D_i)))\cap B_{(i)}$ for $t\in I$ and a homeomorphism $f_t^{(i)}:\mathbb R^3\rightarrow \mathbb R^3$ satisfying $f_0^{(i)}={\rm id}$ and $f_1^{(i)}(L(D_i))=L(D_{i+1}).$ If the move $D_i\rightarrow D_{i+1}$ is of  type $R_1$ or $R_2$, then there are no triple points in 
 $M_{(i)}$.  See Figs.~\ref{fig-m1} and \ref{fig-m2}. If the move $D_{i}\rightarrow D_{i+1}$ is of type $R_3$, then there is a triple point $\tau_i$ in $M_{(i)}$ as in Fig.~\ref{fig-trip2} and $T(\mathcal B_i)=\{\tau_i\}.$ Then $\overset{r-1}{\underset{i=1}{\cup}}T(\mathcal B_i) = \{\tau_i \mid i \in I^3_+\}.$ 

 \begin{figure}[ht]
\begin{center}
\resizebox{1\textwidth}{!}{%
  \includegraphics{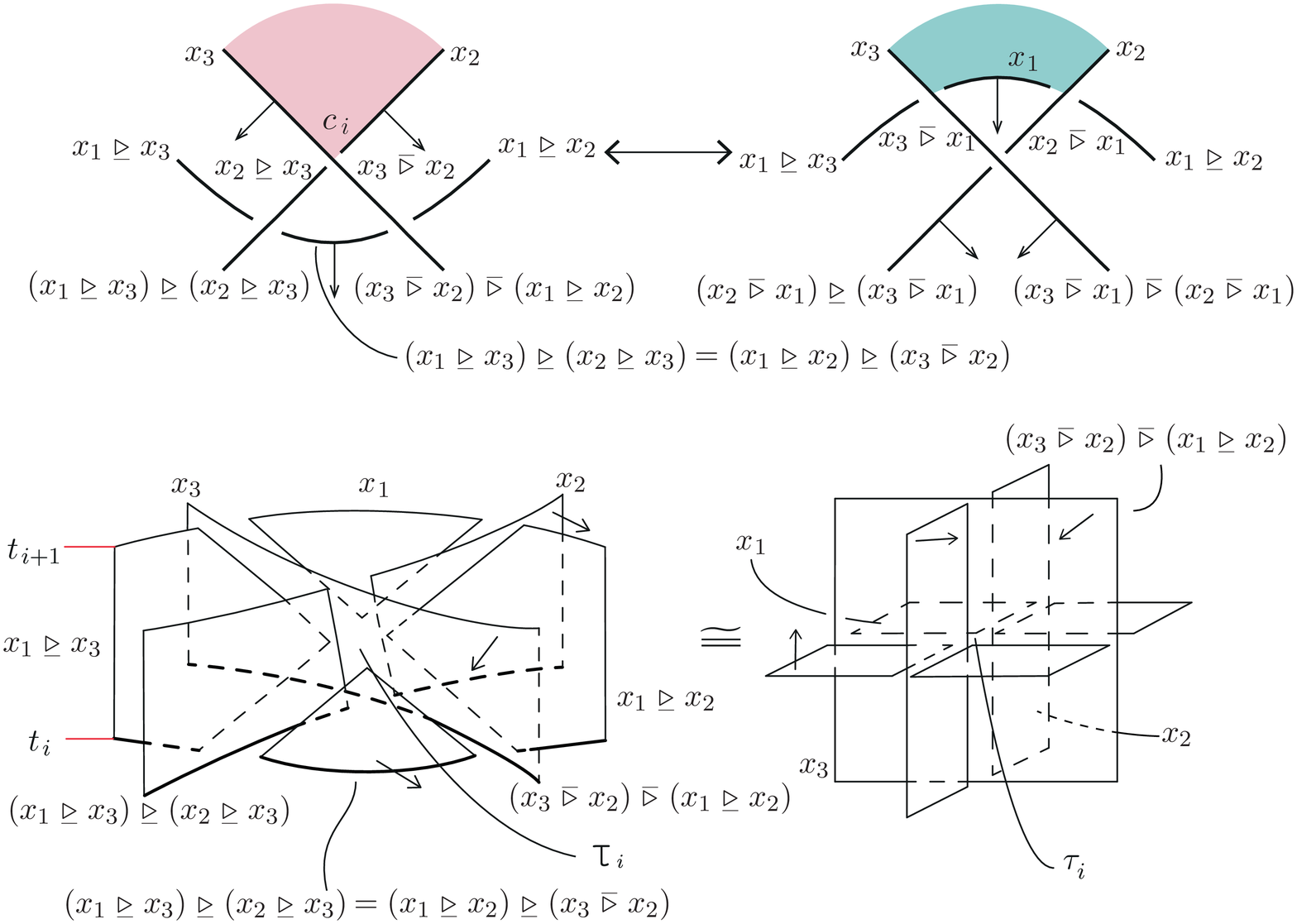} }
\caption{Reidemeister move $R_3$ and corresponding triple point}\label{fig-trip2}
\end{center}
\end{figure}

Similarly, suppose that the move $D_j'\rightarrow D_{j+1}'$ is a  Reidemeister move and $M_{(j)}'$ is a subset of $B_{(j)}'\times I$ determined by $\pi(M_{(j)}'\cap (B_{(j)}'\times \{t\}))=\pi(g_t^{(j)}(L(D_j')))\cap B_{(j)}',$ where $g_t^{(j)}:\mathbb R^3\rightarrow \mathbb R^3$ is a homeomorphism  satisfying $g_0^{(j)}={\rm id}$ and $g_1^{(j)}(L(D_j'))=L(D_{j+1}')$ for $t\in I.$  There is a triple point $\tau_j'\in M_{(j)}'$ for  $j \in I^3_-$.   We have that  $\overset{s-1}{\underset{j=1}{\cup}}T(\mathcal B_j')=\{\tau_j' \mid j \in I^3_-\}.$ Now we have 
   \begin{align*}
   T(\mathcal B)= \{ \tau_i \mid i \in I^3_+\} \cup \{ \tau_j' \mid j \in I^3_-\}. \end{align*}

Let $i\in I^3_+$, i.e.,  $D_i\rightarrow D_{i+1}$ is a Reidemeister move of type $R_3$ and let $\tau_i$ be the corresponding triple point in $M_{(i)}.$ Let $n_b$, $n_m$  and $n_t$ be the co-orientations of the bottom, the middle and the top arcs of $D_i$ in $B_{(i)}$, respectively. By an ambient isotopy, we deform $M_{(i)}$ in $B_{(i)} \times I$ to the standard form of the neighborhood of the triple point $\tau_i$ as in Fig.~\ref{fig-trip2}. Let $\bar{n}_b,$ $\bar{n}_m,$ and $\bar{n}_t$ be the normal vectors corresponding to $n_b,$ $n_m,$ and $n_t$, respectively. Without loss of generality, we may assume $\bar{n}_t= {\bf e}_1$, $\bar{n}_m= \epsilon \/ {\bf e}_2$ and $\bar{n}_b= \epsilon' \/ {\bf e}_3$ for some $\epsilon,\epsilon'\in\{1,-1\}$, where ${\bf e}_1 = (1,0,0)$, ${\bf e}_2 = (0,1,0)$ and ${\bf e}_3 = (0,0,1)$.  
See Fig.~\ref{fig-trip2}. Let $c_i$ be the crossing between the top and the middle arcs in $B_{(i)}$. It is clear from Fig.~\ref{fig-trip2} that $\epsilon={\rm sign}(c_i)$. By (\ref{eq-etmi}), $\epsilon={\rm sign}(c_i)=\epsilon_{tm}(i).$ Hence $\bar{n}_m= \epsilon_{tm}(i) \/ {\bf e}_2$. The sign $\epsilon'$ depends on the co-orientation $n_b$ of the bottom arc. If $n_b$ points from $c_i$, then $\epsilon'=1$. If $n_b$ points toward $c_i,$ then $\epsilon'=-1.$ So, by (\ref{eq-ebi}), $\epsilon'=\epsilon_b(i)$ and hence $\bar{n}_b= \epsilon_b(i) \/ {\bf e}_3$. On the other hand, by definition, the sign $\epsilon(\tau_i)$ of the triple point $\tau_i$ is positive if the co-orientations of the top, the middle and the bottom sheets in this order match the given (right-handed) orientation of $\mathbb{R}^3$. Otherwise, the sign $\epsilon(\tau_i)$ is negative. This gives
\begin{align*}
\epsilon(\tau_i)=\begin{cases}
1 \hskip 1.3cm \text{if } (\bar{n}_t,\bar{n}_m,\bar{n}_b)\in A,\\
-1 \hskip 1cm \text{if }(\bar{n}_t,\bar{n}_m,\bar{n}_b)\in B,\\
\end{cases}
\end{align*}
where 
$A=\{( {\bf e}_1, {\bf e}_2, {\bf e}_3 ), ( {\bf e}_1, -{\bf e}_2, -{\bf e}_3) \}$ and 
$B=\{( {\bf e}_1, -{\bf e}_2, {\bf e}_3 ), ( {\bf e}_1, {\bf e}_2, -{\bf e}_3 ) \}.$  Therefore, for each $i\in I^3_+,$ 
\begin{align}
\label{eq-taui}\epsilon(\tau_i)=\epsilon_{tm}(i)\epsilon_b(i).
\end{align}

Let $j\in I^3_-$, i.e., $D_j'\rightarrow D_{j+1}'$ is a Reidemeister move of type $R_3$. Let $\tau_j'$ be the corresponding triple point in $M_{(j)}'$.  
 Let $n_b$, $n_m$  and $n_t$ be the co-orientations of the bottom, the middle and the top arcs of $D_j'$ in $B_{(j)}'$, respectively. By an ambient isotopy, we deform $M_{(j)}'$ to the standard form of the neighborhood of the triple point $\tau_j'.$ Let $\bar{n}_b,$ $\bar{n}_m,$ and $\bar{n}_t$ be the co-orientations corresponding to $n_b,$ $n_m,$ and $n_t$, respectively. Without loss of generality, we may assume $\bar{n}_t= {\bf e}_1$,  $\bar{n}_m=  \epsilon \/ {\bf e}_2$ and $\bar{n}_b= \epsilon' \/ {\bf e}_3$ for some $\epsilon,\epsilon' \in\{1,-1\}$. Let $c_j$ be the crossing between the top and the middle arcs in $B_{(j)}'$. It is easily seen that $\epsilon={\rm sign}(c_j)$ (cf. Fig.~\ref{fig-trip2}). By (\ref{eq-etmi}), $\epsilon={\rm sign}(c_j)=\epsilon_{tm}(j).$ Hence $\bar{n}_m= \epsilon_{tm}(j) \/ {\bf e}_2$. The sign $\epsilon'$ depends on the co-orientation $n_b$ of the bottom arc. If $n_b$ points from $c_j$, then $\epsilon'=-1$. If $n_b$ points toward $c_j,$ then $\epsilon'=1.$ So, by (\ref{eq-ebi}), $\epsilon'=-\epsilon_b(j)$ and hence $\bar{n}_b= -\epsilon_b(j)\/ {\bf e}_3$. On the other hand, by definition, $\epsilon(\tau_j')=1$ if $\bar{n}_t, \bar{n}_m,$ and $\bar{n}_b$ in this order match the given (right-handed) orientation of $\mathbb{R}^3$. Otherwise, $\epsilon(\tau_j')=-1$. This gives 
\begin{align*}
\epsilon(\tau_j')=\begin{cases}
1 \hskip 1.3cm \text{if } (\bar{n}_t,\bar{n}_m,\bar{n}_b)\in B,\\
-1 \hskip 1cm \text{if }(\bar{n}_t,\bar{n}_m,\bar{n}_b)\in A.\\
\end{cases}
\end{align*} Therefore, for each $j\in I^3_-,$ 
\begin{align}
\label{eq-tauj}\epsilon(\tau_j')=-\epsilon_{tm}(j)\epsilon_b(j).
\end{align}

Now we will show that for each $i \in I^3_+$, $W^B_\theta(i,\mathcal C)=W^B_\theta(\tau_i,\mathcal C)$ and that for each $j \in I^3_-$, $W^B_\theta(j,\mathcal C)=W^B_\theta(\tau_j',\mathcal C)^{-1}$. Let $i\in I^3_+$ (resp. $j \in I^3_-$). Let $R$ be the source region of the stage $i$ (resp. $j$) facing the bottom, middle, top semi-arcs with colors $x_1, x_2, x_3$, respectively, by the coloring $\mathcal C$ as depicted in Fig.~\ref{fig-lab1}. For simplicity, we denote $M_{(i)}$ (resp. $M_{(j)}'$) by $M$ and $[t_i,t_{i+1}]$ (resp. $[t_{j+1}',t_j']$) by $I$ in this proof below. The top (the middle) sheet in $M$ corresponds to the top (the middle) arc times $I$. As shown, for example, in Fig.~\ref{fig-trip2}, $R\times I$ is divided into two ($3$-dimensional) regions by the bottom sheet and one of them is the source region, say $\mathcal R$, of the corresponding triple point $\tau_i$ (resp. $\tau_j'$). The colors $x_1, x_2$ and $x_3$ of the bottom, the middle and the top arc facing the source region $R$ of the stage $i$ (resp. $j$) are the colors of the bottom, the middle and the top sheets facing $\mathcal R$. From (\ref{eq-taui}) and (\ref{eq-tauj}), we have 
$W^B_\theta(i,\mathcal C)=\theta(x_1,x_2,x_3)^{\epsilon_{tm}(i)\epsilon_b(i)}=\theta(x_1,x_2,x_3)^{\epsilon(\tau_i)}=W^B_\theta(\tau_i,\mathcal C)$ and $W^B_\theta(j,\mathcal C)=\theta(x_1,x_2,x_3)^{\epsilon_{tm}(j)\epsilon_b(j)}=\theta(x_1,x_2,x_3)^{-\epsilon(\tau_j')}=W^B_\theta(\tau_j',\mathcal C)^{-1}$. This completes the proof of Claim, and therefore the proof of Theorem~\ref{thm-cocyclem}.
\end{proof}

\begin{example}\label{examp-1}
Let $\tau^3(3_1)$ be the $3$-twist-spun trefoil, which is presented by a marked graph diagram $D_3$ in Fig.~\ref{fig-3tw}. 

\begin{figure}[ht]
\begin{center}
\resizebox{0.5\textwidth}{!}{%
\includegraphics{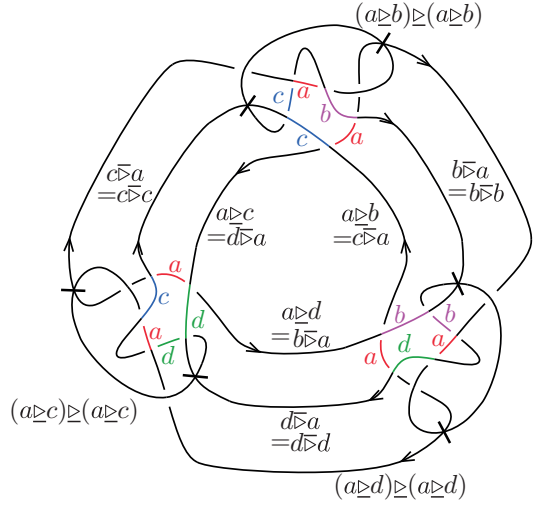}}
\caption{A marked graph diagram $D_3$ of the $3$-twist-spun trefoil}\label{fig-3tw}
\end{center}
\end{figure}

Let $X=\{1,2,3,4\}$ be a non-quandle biquandle in \cite{NeVo} with the matrix:
\begin{equation*}
M_X=\left[\begin{array}{cccc|cccc} 1& 4& 2& 3& 1& 1& 1& 1\\2& 3& 1& 4& 3& 3& 3& 3\\3& 2& 4& 1& 4& 4& 4& 4\\4& 1& 3& 2& 2& 2& 2& 2
\end{array} \right]. 
\end{equation*} 
It follows from Fig.~\ref{fig-3tw} that ${\rm Col}^B_X(D_3)$ is identified with the set $C$ of quadruples $(a,b,c,d)$ with $a,b,c,d \in X$, where $C=\{(1,1,1,1), (1,2,3,4), (1,3,4,2),$ $(1,4,2,3),$ $(2,1,4,3),$ $(2,2,2,2), (2,3,1,4), (2,4,3,1), (3,1,2,4), (3,2,4,1), (3,3,3,3),$ $(3,4,1,2),$ $(4,1,3,2), (4,2,1,3), (4,3,2,1), (4,4,4,4)\}$. 
This gives $\sharp{\rm Col}^B_X(\tau^3(3_1))=16$. Let $\theta=\chi_{(1,4,1)}\chi_{(1,4,3)}\chi_{(2,4,1)}\chi_{(2,4,3)}\chi_{(3,2,1)}
\chi_{(3,2,3)}\chi_{(4,2,1)}\chi_{(4,2,3)}$, where $\chi_{(a,b,c)}{(x,y,z)}$ is defined to be $t$ if ${(x,y,z)}={(a,b,c)}$ and 1 otherwise. Then it is seen that $\theta$ is a biquandle 3-cocycle with the coefficients in $\mathbb Z_2=<t \mid t^2=1>$.

\begin{figure}
\begin{center}
\resizebox{0.9\textwidth}{!}{%
\includegraphics{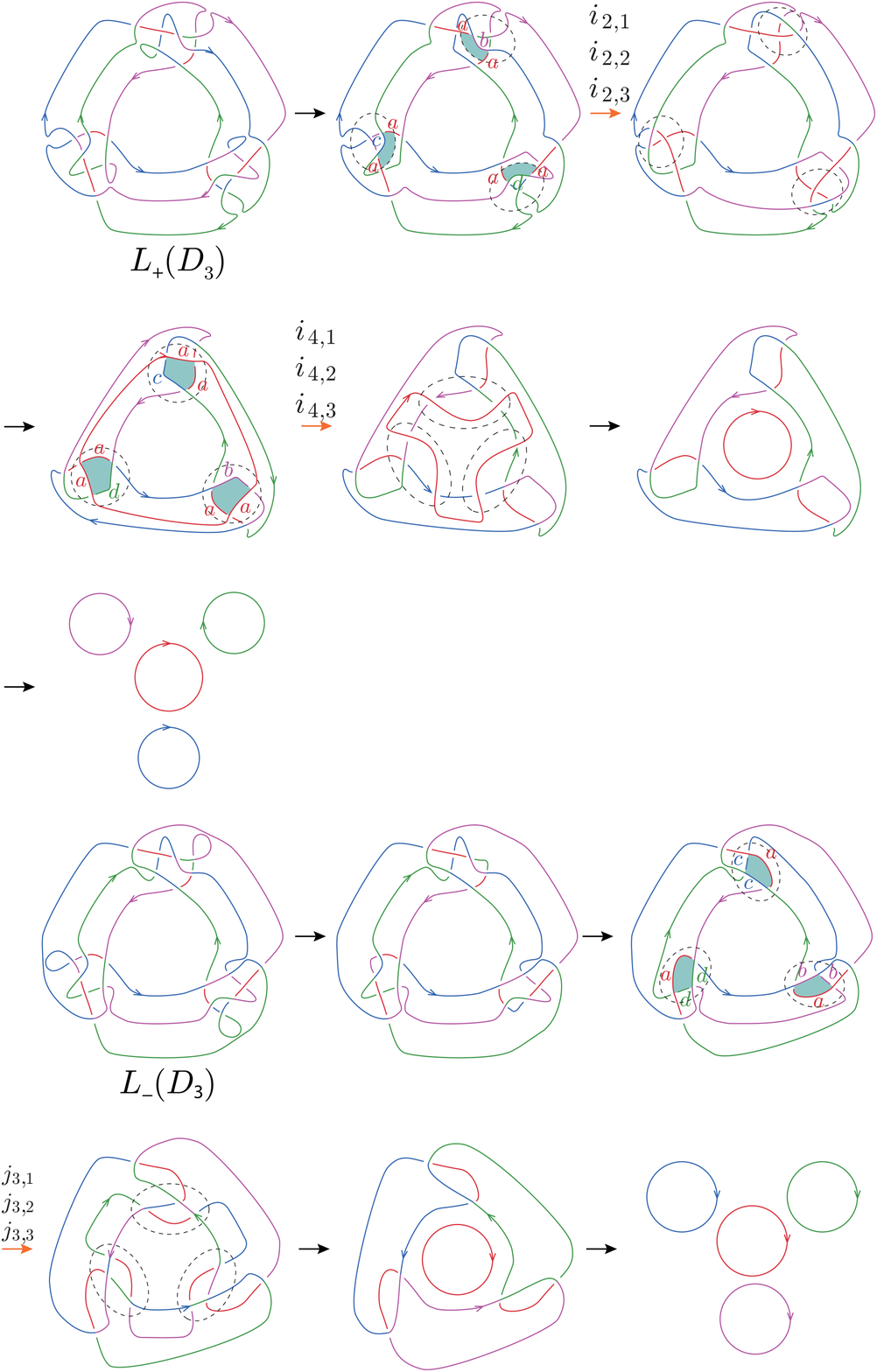}}
\caption{A sequence of link diagrams for two resolutions of $D_3$}\label{fig-bol}
\end{center}
\end{figure}

To compute $\Phi^B_\theta(\tau^3(3_1);\mathbb Z_2)$, we consider two sequences $L_+(D_3)=D_1\to D_2\to \cdots \to D_7=O^4$ and $L_-(D_3)=D_1'\to D_2'\to \cdots \to D_6'=O^4$ depicted in Fig.~\ref{fig-bol}. From the sequences, $\mathcal B(D_3)$ has 9 triple points. Let $i_{a,1}$, $i_{a,2}$ and $i_{a,3}$ be the triple points corresponding to the Reidemeister move $R_3$ between the upper parts, lower left parts, and lower right parts of $D_a$ and $D_{a+1}$ for $a=2,4$, respectively. Let $j_{3,1}$, $j_{3,2}$ and $j_{3,3}$ be the triple points corresponding to the Reidemeister move $R_3$ between the upper parts, lower left parts, and lower right parts of $D_3'$ and $D_4'$, respectively, as indicated in Fig.~\ref{fig-bol}. Then $I^3_+=\{i_{2,1},i_{2,2},i_{2,3},i_{4,1},i_{4,2},i_{4,3}\}, I^3_-=\{j_{3,1},j_{3,2},j_{3,3}\}.$ For the biquandle coloring $\mathcal C$ corresponding to $(a,b,c,d)$, the (Boltzman) weights are given by
\begin{equation*}
\begin{array}{lll} 
W^B_\theta(i_{2,1},\mathcal C)=\theta(a,a,b),& W^B_\theta(i_{2,2},\mathcal C)=\theta(a,a,c),& W^B_\theta(i_{2,3},\mathcal C)=\theta(a,a,d),\\
W^B_\theta(i_{4,1},\mathcal C)=\theta(a,c,a),& W^B_\theta(i_{4,2},\mathcal C)=\theta(a,d,a),& W^B_\theta(i_{4,3},\mathcal C)=\theta(a,b,a),\\
W^B_\theta(j_{3,1},\mathcal C)=\theta(c,a,c),& W^B_\theta(j_{3,2},\mathcal C)=\theta(d,a,d),& W^B_\theta(j_{3,3},\mathcal C)=\theta(b,a,b).
\end{array}
\end{equation*}
Therefore
\begin{align*} 
\Phi^B_\theta(\tau^3(3_1);\mathbb Z_2)&=\Phi^B_\theta(\mathcal B(D_3);\mathbb Z_2)=\sum_{\mathcal{C}\in {\rm Col}^B_X(D_3)}\biggl(\prod_{i\in I^3_+ }W^B_\theta(i,\mathcal{C})\prod_{j\in I^3_- }W^B_\theta(j,\mathcal{C})^{-1}\biggr)\\
&=\displaystyle\sum_{(a,b,c,d)\in C}\Big(\theta(a,a,b)\theta(a,a,c)\theta(a,a,d)\theta(a,c,a)\theta(a,d,a)\\
&\hspace{2cm}\theta(a,b,a)\theta(c,a,c)^{-1}\theta(d,a,d)^{-1}\theta(b,a,b)^{-1}\Big)\\
&=4+12t \in \mathbb Z[\mathbb Z_2].
\end{align*}
\end{example}


\section{Shadow biquandle cocycle invariants of surface-links}
\label{sect-scicl}

In \cite{CKS}, J. S. Carter, S. Kamada and M. Saito introduced the {\it shadow quandle cocycle invariants} for classical links and surface-links (including more general cases) by using the shadow cohomology theory of quandles, which are generalizations of quandle cocycle invariants. These invariants for links and surface-links are defined as state-sums over all quandle colorings of arcs and sheets together with particularly designed region colorings by use of Boltzman weights that are evaluations of a fixed cocycle at crossings of link diagrams and triple points of broken surface diagrams, respectively. In \cite{Le4}, S. Y. Lee introduced state-sum invariants for certain equivalence classes of cobordism surfaces in $\mathbb R^4$ between links by using the shadow cohomology theory of biquandles, which give shadow biquandle cocycle invariants for links as a special case.

In this section we construct shadow biquandle cocycle invariants for surface-links. We begin with reviewing the shadow biquandle (co)homology groups. Let $X$ be a biquandle. The {\it associated group} $G_X$ of $X$ is the group with a group presentation: 
$$\langle a \in X \mid   a \cdot (\ut{b}{a}) = b \cdot (\lt{a}{b}) ~\mbox{for}~   a, b \in X \rangle.$$  
An {\it $X$-set} is a nonempty set $Y$ with a right action of the associated group $G_X$.   We denote by $\lt{y}{g}$ the image of an element $y\in Y$ by the action of $g\in G_X$.  
Let $C_n^{\rm BR}(X)_Y$ be the free abelian group generated by $(n+1)$-tuples $(y,x_1,\ldots,x_n)$ for $n\geq 0$, where $y\in Y$ and $x_1,\ldots,x_n\in X$. For $n<0$, we assume that $C_n^{\rm BR}(X)_Y=0$. Define a homomorphism $\partial_n:C_n^{\rm BR}(X)_Y\rightarrow C_{n-1}^{\rm BR}(X)_Y$ by
\begin{align}\label{op-abqd}
\partial_n(y,x_1,&\ldots,x_n)=\sum_{i=1}^{n}(-1)^{i}[(y,x_1,\ldots,x_{i-1},x_{i+1},\ldots,x_n)\notag\\
&-(\lt{y}{x_i},\lt{x_1}{x_i},\ldots,\lt{x_{i-1}}{x_i},\ut{x_{i+1}}{x_i},\ldots,\ut{x_n}{x_i})]
\end{align}
for $n\geq2$ and $\partial_n=0$ for $n\leq1.$ Then we see that $C_\ast^{\rm BR}(X)_Y=\{C_n^{\rm BR}(X)_Y,\partial_n\}$ is a chain complex.

Let $C_n^{\rm BD}(X)_Y$ be the subset of $C_n^{\rm BR}(X)_Y$ generated by $(y,x_1,\ldots,x_n)$ with $x_i=x_{i+1}$ for some $i\in\{1,\ldots,n-1\}$ if $n\geq2$; otherwise, let $C_n^{\rm BD}(X)_Y=0.$ If $X$ is a biquandle, then $\partial_n(C_n^{\rm BD}(X)_Y)\subset C_{n-1}^{\rm BD}(X)_Y$ and $C_\ast^{\rm BD}(X)_Y=\{C_n^{\rm BD}(X)_Y,\partial_n\}$ is a subcomplex of $C_\ast^{\rm BR}(X)_Y.$ Put $C_n^{\rm BQ}(X)_Y=C_n^{\rm BR}(X)_Y/C_n^{\rm BD}(X)_Y$ and $C_\ast^{\rm BQ}(X)_Y=\{C_n^{\rm BQ}(X)_Y,\partial_n\}$.

For an abelian group $A$, we define the chain and cochain complexes

\begin{align*}
&C_\ast^{\rm W}(X;A)_Y=C_\ast^{\rm W}(X)_Y\otimes A,& &\partial=\partial\otimes \text{id},
\\
&C^\ast_{\rm W}(X;A)_Y=\text{Hom}(C_\ast^{\rm W}(X)_Y, A),& &\delta=\text{Hom}(\partial,\text{id})
\end{align*}
in the usual way, where ${\rm W=BR, BD, BQ}$.

\begin{definition}
The $n$th {\it shadow birack homology group} and the $n$th {\it shadow birack cohomology group} of a birack/biquandle $X$ with coefficient group $A$ are defined by
\begin{align*}
H_n^{\rm BR}(X;A)_Y=H_n(C_\ast^{\rm BR}(X;A)_Y), H_{\rm BR}^n(X;A)_Y=H^n(C^\ast_{\rm BR}(X;A)_Y).
\end{align*}
The $n$th {\it shadow degeneration homology group} and the $n$th {\it shadow degeneration  cohomology group} of a biquandle $X$ with coefficient group $A$ are defined by
\begin{align*}
H_n^{\rm BD}(X;A)_Y=H_n(C_\ast^{\rm BD}(X;A)_Y), H_{\rm BD}^n(X;A)_Y=H^n(C^\ast_{\rm BD}(X;A)_Y).
\end{align*}
The $n$th {\it shadow biquandle homology group} and the $n$th {\it shadow biquandle  cohomology group} of a biquandle $X$ with coefficient group $A$ are defined by
\begin{align*}
H_n^{\rm BQ}(X;A)_Y=H_n(C_\ast^{\rm BQ}(X;A)_Y), H_{\rm BQ}^n(X;A)=H^n(C^\ast_{\rm BQ}(X;A)_Y).
\end{align*}
\end{definition}

The shadow $n$-cycle and $n$-boundary groups (resp. the shadow $n$-cocycle and $n$-coboundary groups) are denoted by $Z_n^{\rm W}(X;A)_Y$ and $B^{\rm W}_n(X;A)_Y$ (resp. $Z_{\rm W}^n(X;A)_Y$ and $B^n_{\rm W}(X;A)_Y$), so that 
\begin{align*}
&H_n^{\rm W}(X;A)_Y=Z_n^{\rm W}(X;A)_Y/B_n^{\rm W}(X;A)_Y,\\
&H_{\rm W}^n(X;A)_Y=Z_{\rm W}^n(X;A)_Y/B_{\rm W}^n(X;A)_Y,
\end{align*}
where ${\rm W}$ is one of ${\rm  BR, BD, BQ}$. We will omit the coefficient group $A$ if $A=\mathbb Z$.

\begin{lemma}\label{prop-sbq3co}
Let $X$ be a finite biquandle and let $Y$ be a nonempty $X$-set.
A homomorphism $\theta: C^{\rm BR}_3(X)_Y \to A$ is a 3-cocycle of the shadow biquandle cochain complex $C^\ast_{\rm BQ}(X;A)_Y$ if and only if $\theta$ satisfies the following two conditions:
\begin{itemize}
\item [(i)] $\theta(y,a,a,b)=0$ and $\theta(y,a,b,b)=0$ for all $y \in Y$ and $a, b \in X$.
\item [(ii)] $\theta(y,b,c,d)+\theta(y,a,b,d)+\theta(\lt{y}{b},\lt{a}{b},\ut{c}{b},\ut{d}{b})+\theta(\lt{y}{d},\lt{a}{d},\lt{b}{d},\lt{c}{d})$
$=\theta(y,a,c,d)+\theta(y,a,b,c)+\theta(\lt{y}{a},\ut{b}{a},\ut{c}{a},\ut{d}{a})+\theta(\lt{y}{c},\lt{a}{c},\lt{b}{c},\ut{d}{c})$ for all $y \in Y$ and $a, b, c, d \in X$.
\end{itemize}
\end{lemma}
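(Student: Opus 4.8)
The plan is to argue exactly as in the proofs of Lemma~\ref{rmk-bq2cocy} and Lemma~\ref{rmk-bq3cocy}, simply carrying the extra $X$-set variable $y\in Y$ along through the computation. That is, I would write out the coboundary map $\delta = \mathrm{Hom}(\partial, \mathrm{id})$ on the cochain complex $C^\ast_{\rm BR}(X;A)_Y$ explicitly in low degrees, using the formula~(\ref{op-abqd}) for $\partial_n$ on $C_n^{\rm BR}(X)_Y$, and then restrict to the subcomplex $C^\ast_{\rm BQ}(X;A)_Y$ by imposing the degeneracy quotient. Concretely, a cochain $\theta \colon C_3^{\rm BR}(X)_Y \to A$ is a $3$-cocycle of $C^\ast_{\rm BR}(X;A)_Y$ iff $\delta^3\theta = \theta \circ \partial_4 = 0$, i.e. $\theta(\partial_4(y,a,b,c,d)) = 0$ for all $y\in Y$ and $a,b,c,d\in X$; expanding $\partial_4$ via~(\ref{op-abqd}) with $n=4$ and collecting the eight resulting $4$-tuples (four ``face'' terms and four ``acted-on'' terms) gives precisely the eight summands appearing in condition~(ii), once the signs are sorted out. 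This yields the cocycle condition for the \emph{birack} complex $C^\ast_{\rm BR}(X;A)_Y$.

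The passage from the birack complex to the biquandle complex $C^\ast_{\rm BQ}(X;A)_Y = \mathrm{Hom}(C_\ast^{\rm BR}(X)_Y/C_\ast^{\rm BD}(X)_Y, A)$ is where condition~(i) enters. A homomorphism on the quotient $C_3^{\rm BQ}(X)_Y$ is the same as a homomorphism on $C_3^{\rm BR}(X)_Y$ that annihilates the degenerate subgroup $C_3^{\rm BD}(X)_Y$, which by definition is generated by the $4$-tuples $(y,a,a,b)$ and $(y,a,b,b)$; so $\theta$ descends to the quotient iff $\theta(y,a,a,b)=0$ and $\theta(y,a,b,b)=0$ for all $y,a,b$, which is exactly condition~(i). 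Since $C_\ast^{\rm BD}(X)_Y$ is a subcomplex (noted in the text just before the definition of the shadow homology groups), $\partial_4$ sends $C_4^{\rm BD}(X)_Y$ into $C_3^{\rm BD}(X)_Y$, so a $\theta$ satisfying~(i) automatically pairs trivially with the degenerate part of $\partial_4(y,a,b,c,d)$, and hence being a cocycle of $C^\ast_{\rm BQ}(X;A)_Y$ amounts to the vanishing of $\theta$ on the \emph{nondegenerate} part of $\partial_4$, which after applying~(i) to cancel the degenerate $4$-tuples is precisely equation~(ii). Conversely, if $\theta$ satisfies (i) and (ii), then $\theta$ descends to $C_3^{\rm BQ}(X)_Y$ and kills $\partial_4$, so it is a $3$-cocycle.

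The only real bookkeeping is the sign computation: expanding $\partial_4(y,a,b,c,d) = \sum_{i=1}^{4}(-1)^i[\,\cdots\,]$ produces four pairs of terms with signs $-,+,-,+$, and one must check that after discarding the degenerate $4$-tuples (those with a repeated adjacent $X$-entry, which vanish on $\theta$ by~(i)) the surviving eight terms regroup, with the four negative-sign terms moved to the right-hand side, into the symmetric form displayed in~(ii) — in particular that the ``acted-on'' faces $(\lt{y}{x_i},\lt{x_1}{x_i},\ldots,\ut{x_n}{x_i})$ produce the arguments $\lt{y}{b}, \lt{y}{d}, \lt{y}{a}, \lt{y}{c}$ and the corresponding $\underline{\triangleright}$/$\overline{\triangleright}$ combinations of $a,b,c,d$ exactly as written. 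This is the same calculation carried out in Lemma~\ref{rmk-bq3cocy} with an inert extra slot, so I expect no conceptual obstacle — the ``hard part'' is merely verifying that the shadow variable $y$ is transformed by $\lt{\ }{x_i}$ in the $i$-th face term consistently, which is immediate from~(\ref{op-abqd}). I would therefore present this as: \emph{It follows from the definition by a direct calculation, keeping track of the $X$-set variable as in Lemma~\ref{rmk-bq3cocy}.}
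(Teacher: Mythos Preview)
Your proposal is correct and follows essentially the same approach as the paper's proof: both expand $\partial_4(y,a,b,c,d)$ explicitly via~(\ref{op-abqd}) to obtain the eight terms appearing in condition~(ii), and identify condition~(i) with the vanishing of $\theta$ on the generators of $C_3^{\rm BD}(X)_Y$. One small remark: your hedging about ``discarding degenerate $4$-tuples'' in $\partial_4(y,a,b,c,d)$ is unnecessary---the eight terms in the expansion (four ``face'' terms and four ``acted-on'' terms) are generically nondegenerate and match the eight summands of~(ii) on the nose, so no application of~(i) is needed to pass from $\theta\circ\partial_4=0$ to~(ii).
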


\begin{proof}
Suppose that $\theta \in Z^3_{\rm BQ}(X;A)_Y$. Then $\theta(C_3^{\rm BD}(X)_Y)=0$ and $\delta(\theta)=\theta\circ\partial_4=0$. Since $(y,a,a,b), (y,a,b,b) \in C_3^{\rm BD}(X)_Y$ for all $y\in Y$ and $a,b\in X$, we obtain the condition (i).
For every $y \in Y$ and $a, b, c, d \in X$, $(y,a,b,c,d)\in C_4^{\rm BR}(X)_Y$ and hence $(\theta\circ\partial_4)((y,a,b,c,d))=0$. From (\ref{op-abqd}), we have \begin{equation}\label{eq-sbqcc1}
\begin{split}
\partial_4(y,a,b,c,d)&=(y,b,c,d)-(y,a,c,d)+(y,a,b,d)-(y,a,b,c)\\
&-(\lt{y}{a},\ut{b}{a},\ut{c}{a},\ut{d}{a})+(\lt{y}{b},\lt{a}{b},\ut{c}{b},\ut{d}{b})\\
&-(\lt{y}{c},\lt{a}{c},\lt{b}{c},\ut{d}{c})+(\lt{y}{d},\lt{a}{d},\lt{b}{d},\lt{c}{d}).
\end{split}
\end{equation}
By takng $\theta$ on both sides of (\ref{eq-sbqcc1}), we have the condition (ii).

Conversely, suppose that a homomorphism $\theta: C^{\rm BR}_3(X)_Y \to A$ satisfies the two conditions (i) and (ii). Since $C_3^{\rm BD}(X)_Y$ is generated by the elements $(y,a,a,b)$ and $(y,a,b,b)$ for $y\in Y$ and $a,b\in X$, it is direct from the condition (i) that $\theta(C_3^{\rm BD}(X)_Y)=0$. Now for every $(y,a,b,c,d)\in Y \times X^4,$ it is easily seen from (\ref{eq-sbqcc1}) and the condition (ii) that $(\theta\circ\partial_4)((y,a,b,c,d))=0$. Since $C^{\rm BR}_4(X)_Y$ is generated by the elements $(y,a,b,c,d)\in Y \times X^4,$ we see that $\delta(\theta)(C^{\rm BR}_4(X)_Y)=(\theta\circ\partial_4)(C^{\rm BR}_4(X)_Y)=0$, i.e., $\delta(\theta)=0$. This implies that $\theta \in Z^3_{\rm BQ}(X;A)_Y$. 
\end{proof}

The two conditions (i) and (ii) in Lemma~\ref{prop-sbq3co} are called the {\it shadow biquandle 3-cocycle condition}. A homomorphism $\theta : C^{\rm BR}_3(X)_Y \to A$ or a map $\theta : Y\times X^3 \to A$ satisfying the shadow biquandle 3-cocycle condition is called a {\it shadow biquandle $3$-cocycle}.

\bigskip

 Let $X$ be a finite biquandle and let $Y$ be a nonempty $X$-set. Let $\mathcal B$ be a broken surface diagram and let $R(\mathcal B)$ be the set of the complementary regions of $\mathcal B$ in $\mathbb R^3$. For a biquandle $X$-coloring $\mathcal{C}: S(\mathcal B) \rightarrow X$ of $\mathcal B$, a {\it shadow biquandle coloring} of $\mathcal B$ (extending a given biquandle coloring $\mathcal C$) by $(X,Y)$ or simply a {\it shadow biquandle $(X,Y)$-coloring} of $\mathcal B$ is a map $\tilde{\mathcal{C}}: S(\mathcal B) \cup  R(\mathcal B) \rightarrow X \cup Y $ satisfying the conditions:  
\begin{itemize}   
\item $\tilde{\mathcal{C}} (S(\mathcal B)) \subset X$ and $\tilde{\mathcal{C}} (R(\mathcal B)) \subset Y$. 
\item The restriction of $\tilde{\mathcal C}$ to $S(\mathcal B)$ is a given biquandle $X$-coloring $\mathcal C$.   
\item   If two adjacent regions $f_1$ and $f_2$ are separated by a semi-sheet $e$ 
and the co-orientation of $e$ points from $f_1$ to $f_2$, then $\tilde{\mathcal{C}}(f_2)=\lt{\tilde{\mathcal{C}}(f_1)} {\tilde{\mathcal{C}}(e)}$ (see Fig.~\ref{fig-sdpc}).  
\end{itemize}
We denote by ${\rm Col}^{\rm SB}_{(X,Y)}(\mathcal B)$ the set of all shadow biquandle $(X,Y)$-colorings of $\mathcal B$.

\begin{figure}[ht]
\begin{center}
\resizebox{0.9\textwidth}{!}{%
  \includegraphics{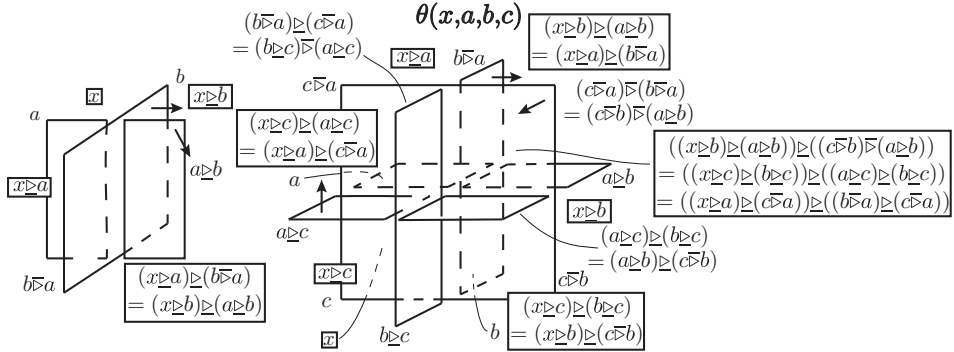}}
\caption{Shadow colors at a double point curve and a triple point}\label{fig-sdpc}
\end{center}
\end{figure}

\begin{theorem}\label{thm-sxycnbr}
Let $\mathcal L$ be a surface-link and let $\mathcal B$ and $\mathcal B'$ be two broken surface diagrams of $\mathcal L$. Then for any finite biquandle $X$ and a nonempty $X$-set $Y$, there is a one-to-one correspondence between ${\rm Col}^{\rm SB}_{(X,Y)}(\mathcal B)$ and ${\rm Col}^{\rm SB}_{(X,Y)}(\mathcal B')$. Consequently,  the cardinal number $\sharp{\rm Col}^{\rm SB}_{(X,Y)}(\mathcal B)$ is an invariant of $\mathcal L$.
\end{theorem}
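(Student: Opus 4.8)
The plan is to reduce the statement to invariance under the seven Roseman moves, exactly in the style of the proof of Theorem~\ref{thm-bsd-cnbr}. By Theorem~\ref{thm-RosM} it suffices to treat the case where $\mathcal B'$ is obtained from $\mathcal B$ by a single Roseman move applied inside an open $3$-disk $E\subset\mathbb R^3$, so that $\mathcal B\cap(\mathbb R^3-E)=\mathcal B'\cap(\mathbb R^3-E)$. Given a shadow biquandle $(X,Y)$-coloring $\tilde{\mathcal C}$ of $\mathcal B$, its restriction to the part of the diagram outside $E$ (both the semi-sheets and the complementary regions meeting $\mathbb R^3-E$) is well defined, and I want to show it extends uniquely to a shadow $(X,Y)$-coloring $\tilde{\mathcal C}'$ of $\mathcal B'$, and symmetrically in the other direction; this bijection then gives $\sharp{\rm Col}^{\rm SB}_{(X,Y)}(\mathcal B)=\sharp{\rm Col}^{\rm SB}_{(X,Y)}(\mathcal B')$ and hence the invariance of the cardinality.

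First I would dispose of the underlying biquandle coloring: by Theorem~\ref{thm-bsd-cnbr} the restriction $\mathcal C$ of $\tilde{\mathcal C}$ to $S(\mathcal B)$ already extends uniquely to a biquandle $X$-coloring $\mathcal C'$ of $\mathcal B'$, so the only new content is the region-labelling part. For the region labels I would argue that, inside $E$, the complementary regions form a connected configuration (or at worst each region inside $E$ is reachable from a region meeting $\mathbb R^3-E$ by crossing finitely many semi-sheets), so the value of $\tilde{\mathcal C}'$ on any region inside $E$ is forced by the third defining condition, namely $\tilde{\mathcal C}'(f_2)=\lt{\tilde{\mathcal C}'(f_1)}{\tilde{\mathcal C}'(e)}$ whenever the co-orientation of $e$ points from $f_1$ to $f_2$. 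This gives uniqueness of the extension immediately; what must be checked is \emph{existence}, i.e.\ that the forced values are consistent (independent of the path of regions chosen). Consistency around a single double point curve reduces to the identity $\lt{(\lt{y}{a})}{(\ut{b}{a})}=\lt{(\lt{y}{b})}{(\lt{a}{b})}$, which is precisely the relation $a\cdot(\ut{b}{a})=b\cdot(\lt{a}{b})$ in the associated group $G_X$ acting on $Y$ (see Fig.~\ref{fig-sdpc}); consistency around a triple point follows from the biquandle axioms together with this same relation, again as illustrated in Fig.~\ref{fig-sdpc}.

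With these local consistency facts in hand, the seven Roseman moves are handled one at a time. Moves of type $1$ through $4$ involve only double point curves and branch points and no triple points, so only the double-point consistency identity is needed; for types $5$, $6$, $7$, which rearrange triple points, one uses in addition the triple-point consistency. In each case the region labels outside $E$ determine a unique consistent labelling of the regions inside $E$ on both sides of the move, and these agree on the common part $\mathbb R^3-E$, giving the required bijection $\tilde{\mathcal C}\mapsto\tilde{\mathcal C}'$ with inverse obtained by running the argument with $\mathcal B$ and $\mathcal B'$ interchanged. The main obstacle I anticipate is the bookkeeping of co-orientations when verifying that the forced region values are path-independent through the disk $E$ for each Roseman move — in particular making sure the $G_X$-action relation is being applied in the correct direction at each wall crossing; once the single double-point and single triple-point cases are established cleanly, the move-by-move verification is routine and entirely parallel to the proof of Theorem~\ref{thm-bsd-cnbr}.
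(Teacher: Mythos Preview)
Your proposal is correct and follows essentially the same approach as the paper: reduce to a single Roseman move via Theorem~\ref{thm-RosM}, restrict to the open $3$-disk $E$ where the move occurs, and verify that the coloring on $\mathbb R^3-E$ extends uniquely across $E$ using the biquandle axioms together with the $G_X$-action relation on $Y$ (cf.\ Fig.~\ref{fig-sdpc}). The paper's proof is terser and does not explicitly separate the semi-sheet and region extensions or spell out the double-point/triple-point consistency identities as you do, but the underlying argument is the same.
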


\begin{proof} 
By Theorem \ref{thm-RosM}, it suffices to prove the assertion for the case that $\mathcal B'$ is obtained from $\mathcal B$ by a single Roseman move. Let $E$ be an open $3$-disk in $\mathbb R^3$ where a Roseman move under consideration is applied. Then $\mathcal B\cap (\mathbb R^3 - E) = \mathcal B'\cap (\mathbb R^3 -E)$. Now let $\mathcal C$ be a shadow biquandle $(X,Y)$-coloring of $\mathcal B$. Using biquandle axioms of Definition \ref{defn-biqdle}, the definition of an $X$-set and Fig.~\ref{fig-sdpc},  it is seen that for each Roseman move, the restriction of $\mathcal C$ to $\mathcal B\cap (\mathbb R^3 - E) (=\mathcal B'\cap (\mathbb R^3 -E))$ can be uniquely extended to a biquandle $(X,Y)$-coloring $\mathcal C'$ of $\mathcal B'$, and conversely the restriction of the unique biquandle $(X,Y)$-coloring $\mathcal C'$ to $\mathcal B'\cap (\mathbb R^3 -E)$ is extended to the biquandle $(X,Y)$-coloring $\mathcal C$. 
\end{proof}

We call the cardinal number $\sharp{\rm Col}^{\rm SB}_{(X,Y)}(\mathcal B)$ the {\it shadow biquandle $(X,Y)$-coloring number} of $\mathcal L$ and denote it by $\sharp{\rm Col}^{\rm SB}_{(X,Y)}(\mathcal L)$.

\bigskip

Let $\theta$ be a shadow biquandle $3$-cocycle and let $\tilde{\mathcal C}$ be a shadow biquandle $(X,Y)$-coloring of a broken surface diagram $\mathcal B$. Let $R$ be the source region of a triple point $\tau$ of $\mathcal B$. If $x$ is the color of the source region $R$ and $a, b$ and $c$ are the colors of the bottom, middle and top semi-arcs facing $R$, respectively, as depicted in Fig.~\ref{fig-sdpc}. Then we define the {\it shadow Boltzman weight} $W^{\rm SB}_\theta(\tau,\tilde{\mathcal C})$ at $\tau$ with respect to $\tilde{\mathcal C}$ by  $$W^{\rm SB}_\theta(\tau,\tilde{\mathcal C})=\theta(x,a,b,c)^{\epsilon(\tau)}.$$

\begin{figure}[ht]
\begin{center}
\resizebox{0.60\textwidth}{!}{%
  \includegraphics{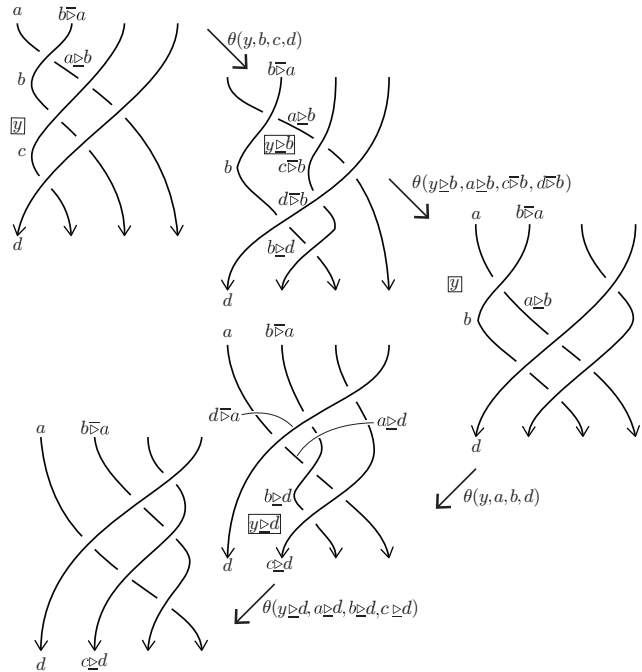}}
\caption{The tetrahedral move and shadow biquandle 3-cocycle condition (ii), LHS}\label{fig-shlcol1}
\end{center}
\end{figure}

\begin{definition}\label{def-shlci}
Let $\theta$ be a shadow biquandle $3$-cocycle. The {\it shadow state-sum} or {\it shadow partition function} (associated with $\theta$) of a broken surface diagram $\mathcal B$ is defined by $$\Phi^{\rm SB}_\theta(\mathcal B; A)=\sum_{\tilde{\mathcal C}\in {\rm Col}^{\rm SB}_{X}(\mathcal B)_Y}\prod_{\tau \in T(\mathcal B)}W^{\rm SB}_\theta(\tau,\tilde{\mathcal C}) \in \mathbb Z[A].$$ 
\end{definition}

\begin{theorem}\label{thm-shinv}
Let $\mathcal L$ be a surface-link and let $\mathcal B$ be a broken surface diagram of $\mathcal L$. For any given shadow biquandle $3$-cocycle $\theta$, the shadow state-sum $\Phi^{\rm SB}_\theta(\mathcal B; A)$ is an invariant of $\mathcal L$.  (It is denoted by $\Phi^{\rm SB}_\theta(\mathcal L; A)$.)   
\end{theorem}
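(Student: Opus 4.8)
The plan is to follow the same strategy as in the proof of Theorem~\ref{thm-bqc-inv-bsd}, with the biquandle $3$-cocycle conditions of Lemma~\ref{rmk-bq3cocy} replaced by the shadow biquandle $3$-cocycle conditions of Lemma~\ref{prop-sbq3co}. By Theorem~\ref{thm-RosM} it suffices to treat the case that $\mathcal B'$ is obtained from $\mathcal B$ by a single Roseman move, supported in an open $3$-disk $E\subset\mathbb R^3$. By (the proof of) Theorem~\ref{thm-sxycnbr}, each shadow biquandle $(X,Y)$-coloring $\tilde{\mathcal C}$ of $\mathcal B$ determines a unique corresponding coloring $\tilde{\mathcal C}'$ of $\mathcal B'$, the two agreeing on $\mathcal B\cap(\mathbb R^3-E)=\mathcal B'\cap(\mathbb R^3-E)$. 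Thus the theorem reduces to the identity
$$\prod_{\tau\in T(\mathcal B)}W^{\rm SB}_\theta(\tau,\tilde{\mathcal C})=\prod_{\tau\in T(\mathcal B')}W^{\rm SB}_\theta(\tau,\tilde{\mathcal C}'),$$
for each Roseman move and each such pair $(\tilde{\mathcal C},\tilde{\mathcal C}')$; summing over all shadow colorings then yields $\Phi^{\rm SB}_\theta(\mathcal B;A)=\Phi^{\rm SB}_\theta(\mathcal B';A)$.

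Next I would dispose of the easy moves, just as in Theorem~\ref{thm-bqc-inv-bsd}. From Fig.~\ref{fig-rose}, the Roseman moves of types $1$, $2$, $3$, and $4$ involve no triple points, so the two products are over the same set of triple points with the same colors and are equal. For the type $5$ move a single triple point is created or destroyed at which two of the three sheet colors coincide, so the product of weights changes only by a factor $\theta(y,x_1,x_1,x_2)^{\pm1}$ or $\theta(y,x_1,x_2,x_2)^{\pm1}$ for some $y\in Y$ and $x_1,x_2\in X$, which equals $1$ by condition~(i) of Lemma~\ref{prop-sbq3co}. For the type $6$ move two triple points with identical sheet colors, identical source-region color, and opposite signs appear, so their weights are mutually inverse and cancel. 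This leaves only the type $7$ (tetrahedral) move.

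For the tetrahedral move there are four triple points on each side. Reading the local picture in motion-picture form as in Fig.~\ref{fig-shlcol1} and its right-hand-side counterpart, I would record the color $y\in Y$ of the relevant region together with the colors $a,b,c,d\in X$ of the four sheets, and check that the product of the four weights before the move is exactly the left-hand side of condition~(ii) of Lemma~\ref{prop-sbq3co} --- the four factors $\theta(y,b,c,d)$, $\theta(y,a,b,d)$, $\theta(\lt{y}{b},\lt{a}{b},\ut{c}{b},\ut{d}{b})$, $\theta(\lt{y}{d},\lt{a}{d},\lt{b}{d},\lt{c}{d})$, each raised to the appropriate power $\pm1$ --- while the product after the move is the right-hand side of~(ii). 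The signs $\epsilon(\tau)$ attached to the four triple points are forced by the orientations and the over/under data exactly as the crossing signs were in the proof of Theorem~\ref{thm-bqc-inv-bsd}, and are consistent on the two sides of the move, so the two products agree by Lemma~\ref{prop-sbq3co}~(ii).

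The hard part will be the bookkeeping in this last step: one must verify that the source region of each of the four triple points in the standard model of the tetrahedral move carries precisely the $X$-set color obtained by pushing $y$ across the appropriate sheets, so that the region arguments $y$, $\lt{y}{b}$, $\lt{y}{d}$, $\lt{y}{a}$, $\lt{y}{c}$ occurring match those in condition~(ii), and that each triple point receives the sign dictated by the orientation convention of Section~\ref{sect-cisl} both before and after the move. Once this local analysis is carried out, the equality of the two weight products is immediate from Lemma~\ref{prop-sbq3co}~(ii), and the theorem follows.
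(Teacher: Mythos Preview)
Your proposal is correct and follows essentially the same approach as the paper's own proof: reduce to a single Roseman move via Theorem~\ref{thm-RosM}, use the bijection of shadow colorings from Theorem~\ref{thm-sxycnbr}, and then verify invariance of the weight product case by case using Lemma~\ref{prop-sbq3co}~(i) for type~5, cancellation for type~6, and Lemma~\ref{prop-sbq3co}~(ii) for the tetrahedral move. The paper's argument is slightly terser on the tetrahedral bookkeeping (it simply refers to Figs.~\ref{fig-shlcol1} and \ref{fig-shlcol2}), but the content is the same.
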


\begin{proof}
Suppose that $\mathcal B'$ is a broken surface diagram obtained from $\mathcal B$ by a single Roseman move. For each shadow biquandle $(X,Y)$-coloring $\tilde{\mathcal C}$ of $\mathcal B$, let $\tilde{\mathcal C'}$ be the corresponding shadow biquandle $(X,Y)$-coloring of $\mathcal B'$ as in the proof of Theorem \ref{thm-sxycnbr}. It suffices to verify that $\prod_{\tau \in T(\mathcal B)}W^{\rm SB}_\theta(\tau,\tilde{\mathcal C})=\prod_{\tau \in T(\mathcal B')}W^{\rm SB}_\theta(\tau,\tilde{\mathcal C'})$. It is direct from Fig.~\ref{fig-rose} that the Roseman moves of type 1, 2, 3 and 4 involve no triple points. For the Roseman move of type 5, the product of weights differ by $\phi(y,x_1,x_1,x_2)^{\pm1}$ or $\phi(y,x_1,x_2,x_2)^{\pm1}$ for some $y\in Y$ and $x_1,x_2 \in X$ and it is immediate from Lemma~\ref{prop-sbq3co} (i) that $\phi(y,x_1,x_1,x_2)^{\pm1}=\phi(y,x_1,x_2,x_2)^{\pm1}=1$. Hence the product of the weights are not changed. For the Roseman move of type 6, two triple points with the same weights of opposite signs are involved and the product of the weights are cancelled. For the Roseman move of type 7, there are four involved triple points before and after the move as illustrated in Figs.~\ref{fig-shlcol1} and \ref{fig-shlcol2} in motion pictures (the tetrahedral move). From Lemma~\ref{prop-sbq3co} (ii), we see that the product of the weights are unchanged. 
\end{proof}

\begin{figure}[ht]
\begin{center}
\resizebox{0.60\textwidth}{!}{%
  \includegraphics{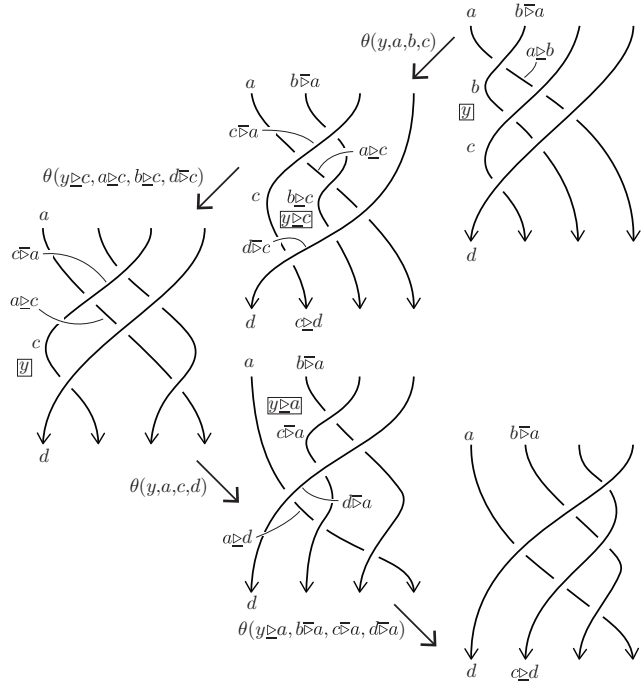}}
\caption{The tetrahedral move and shadow biquandle 3-cocycle condition (ii), RHS}\label{fig-shlcol2}
\end{center}
\end{figure}

By a similar argument as in the proof of \cite[Proposition 5.7]{CJKLS}, it is verified that if $\Phi^{\rm SB}_\theta$ and $\Phi^{\rm SB}_{\theta'}$ are the shadow state-sum invariants defined from cohomologous shadow biquandle cocycles $\theta$ and $\theta'$ (so that $\theta=\theta'\delta\phi$ for some shadow $2$-cochain $\phi$), then $\Phi^{\rm SB}_\theta(\mathcal L)=\Phi^{\rm SB}_{\theta'}(\mathcal L)$ for any surface-link $\mathcal L$. In particular, if $\theta$ is a shadow coboundary, then $\Phi^{\rm SB}_{\theta}(\mathcal L)$ is trivial for all surface-link $\mathcal L$.


\section{Shadow biquandle cocycle invariants from marked\\ graph diagrams}\label{sect-sqcocm} 

In this section we introduce a method of computing shadow biquandle $3$-cocycle invariants of surface-links from marked graph diagrams.  

\begin{definition}\label{defn-sqc-1} 
Let $X$ be a finite biquandle and let $Y$ be a nonempty $X$-set. Let $D$ be a marked graph diagram in $\mathbb R^2$ with co-orientation, $R(D)$ the set of the complementary regions of $D$ in $\mathbb R^2$, and let $\mathcal C$ be a biquandle $X$-coloring of $D$. A {\it shadow biquandle coloring} of $D$ by $(X,Y)$ or a {\it shadow biquandle $(X,Y)$-coloring} of $D$ (extending $\mathcal C$) is a map $\tilde{\mathcal{C}}: S(D)\cup R(D) \to X\cup Y$ satisfying the following conditions (1), (2), and (3): 
\begin{itemize} 
\item [(1)] $\tilde{\mathcal{C}}(S(D) )\subset X$ and $\tilde{\mathcal{C}}(R(D) )\subset Y.$ 
\item [(2)] The restriction of $\tilde{\mathcal{C}}$ to $S(D)$ is a given biquandle $X$-coloring $\mathcal C.$ 
\item[(3)] If two adjacent regions $f_1$ and $f_2$ are separated by a semi-arc $e$  
and the co-orientation of $e$ points from $f_1$ to $f_2$, then $\tilde{\mathcal{C}}(f_2)=\lt{ \tilde{\mathcal{C}}(f_1)} {\tilde{\mathcal{C}}(e)}$ (see Fig.~\ref{fig-col}).     
\begin{figure}[ht]
\begin{center}
\resizebox{0.75\textwidth}{!}{%
  \includegraphics{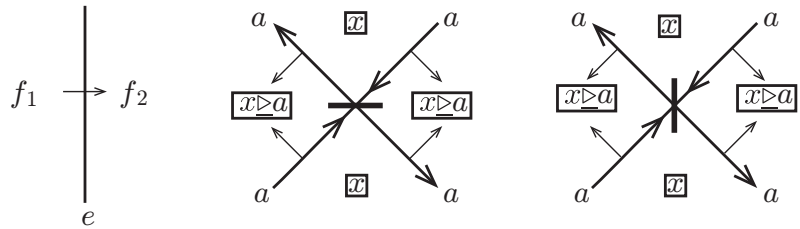}}
\caption{Colorings of regions}\label{fig-col} 
\end{center}
\end{figure}
\end{itemize} 
\end{definition} 

We denote by ${\rm Col}^{\rm SB}_{(X,Y)}(D)$ the set of all shadow biquandle $(X,Y)$-colorings of $D$.

\begin{theorem}\label{thm-shbq-col}
Let $\mathcal L$ be a surface-link and let $D$ and $\mathcal B$ be a marked graph diagram and a broken surface diagram presenting $\mathcal L$, respectively. Then there is a one-to-one correspondence between ${\rm Col}^{\rm SB}_{(X,Y)}(D)$ and ${\rm Col}^{\rm SB}_{(X,Y)}(\mathcal B)$. Consequently, $\sharp{\rm Col}^{\rm SB}_{(X,Y)}(\mathcal B)=\sharp{\rm Col}^{\rm SB}_{(X,Y)}(D)$.
\end{theorem}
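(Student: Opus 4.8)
The plan is to exhibit each of the two shadow coloring sets as a product of the corresponding biquandle coloring set with $Y$, compatibly, and then to invoke Theorem~\ref{thm-col}. By Theorem~\ref{thm-sxycnbr} we may assume $\mathcal B=\mathcal B(D)$, the broken surface diagram associated with $D$, and I would keep in hand the natural bijection $\alpha:{\rm Col}^B_X(D)\to{\rm Col}^B_X(\mathcal B)$ from the proof of Theorem~\ref{thm-col}, which comes from a natural isomorphism $BQ(D)\cong BQ(\mathcal B)$. It then suffices to lift $\alpha$ to a bijection on shadow colorings.

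The key lemma I would isolate is the following: for a marked graph diagram $D'$ (resp.\ a broken surface diagram $\mathcal B'$) and a fixed biquandle $X$-coloring $\mathcal C$, the region colorings $R(D')\to Y$ (resp.\ $R(\mathcal B')\to Y$) that, together with $\mathcal C$, form a shadow $(X,Y)$-coloring are in bijection with $Y$ via evaluation at the unbounded region. To see this, observe first that every complementary region can be joined to the unbounded one by a path meeting the diagram transversally in finitely many semi-arcs (resp.\ semi-sheets), so the rule of Definition~\ref{defn-sqc-1} (resp.\ its broken-surface analogue) forces the $Y$-color of every region once that of the unbounded region is chosen. Since $\mathbb R^2$ (resp.\ $\mathbb R^3$) is simply connected, this propagation is well defined as soon as it is consistent around a small loop linking each crossing and each marked vertex of $D'$ (resp.\ each double point curve of $\mathcal B'$). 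Around such a loop the base color gets multiplied, under the action of the associated group $G_X$ on $Y$, by $a\cdot(\ut{b}{a})\cdot(\lt{a}{b})^{-1}\cdot b^{-1}$ (up to inversion and conjugation) at a double point curve, where $a,b$ are the colors of the under- and over-semi-sheets meeting there; this word is the defining relator of $G_X$, hence trivial. At a marked vertex all four incident semi-arcs carry a common color $a$, and the linking loop crosses them with alternating signs determined by the orientation convention at a marked vertex, so the four contributions cancel. Hence consistency holds for every $\mathcal C$, and a shadow $(X,Y)$-coloring amounts to a biquandle $X$-coloring together with a choice of $Y$-color for the unbounded region, giving a natural bijection ${\rm Col}^{\rm SB}_{(X,Y)}(D')\cong{\rm Col}^B_X(D')\times Y$ (and likewise for $\mathcal B'$).

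Applying the lemma to $D$ and to $\mathcal B=\mathcal B(D)$ and composing with $\alpha\times{\rm id}_Y$ gives the desired bijection ${\rm Col}^{\rm SB}_{(X,Y)}(D)\to{\rm Col}^{\rm SB}_{(X,Y)}(\mathcal B)$, and in particular $\sharp{\rm Col}^{\rm SB}_{(X,Y)}(\mathcal B)=\sharp{\rm Col}^{\rm SB}_{(X,Y)}(D)$. I expect the main obstacle to be the key lemma, and inside it the marked-vertex case, where one must verify from the prescribed orientations that the linking loop meets the four equally colored semi-arcs with signs that cancel in $G_X$; the double point curve case is the computation underlying the definition of $G_X$ and of Fig.~\ref{fig-sdpc}, which also shows, together with axiom (B1), that triple points and branch points of $\mathcal B$ impose no further constraint. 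As an alternative one could argue geometrically, using that $\mathcal B(D)$ meets the slice $\mathbb R^2\times\{0\}$ exactly in $D_0$, so that restriction to that slice matches the complementary regions of $\mathcal B(D)$ with those of $D$ compatibly with the coloring rules; but then the bookkeeping through the capping disks, the product pieces $L_\pm\times I$, and the sheets $M_{(i)}$, $M'_{(j)}$ of $\mathcal B(D)$ becomes the obstacle instead.
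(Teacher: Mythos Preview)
Your argument is correct, and it takes a genuinely different route from the paper's. The paper proceeds geometrically: after reducing to $\mathcal B=\mathcal B(D)$ via Theorem~\ref{thm-sxycnbr}, it restricts a shadow coloring of $\mathcal B(D)$ to the cross-section at level $0$ to obtain a shadow coloring of $D$, and then invokes the construction of \cite{As} to produce the inverse map. No decomposition of the shadow coloring set is used.

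Your approach instead factors both coloring sets as ${\rm Col}^B_X(\cdot)\times Y$ by pinning down the color of the unbounded region, and then transports along the bijection of Theorem~\ref{thm-col}. This is more algebraic and yields a sharper statement as a byproduct, namely $\sharp{\rm Col}^{\rm SB}_{(X,Y)}(\cdot)=|Y|\cdot\sharp{\rm Col}^B_X(\cdot)$, which the paper's argument does not make explicit. The cost is the well-definedness check in your key lemma: around a crossing or a double point curve the relator of $G_X$ does the job, and around a marked vertex the alternation of co-orientations (inherited from the orientation of the saddle sheet, cf.\ Fig.~\ref{fig-col-0}(c),(d)) gives the cancellation you claim; your remark that triple points and branch points add nothing is correct since they are codimension~$\geq 2$ in $\mathbb R^3$. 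The paper's route avoids this bookkeeping by leaning on \cite{As}, but then the burden shifts to the geometric extension through the product pieces and capping disks of $\mathcal B(D)$, exactly the alternative you mention at the end.
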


\begin{proof}
By Theorem \ref{thm-sxycnbr}, we may assume that $\mathcal B$ is a broken surface diagram $\mathcal B(D)$ associated with $D$ defined in Section \ref{sect-mgd}. Let $\tilde{\mathcal{C}}$ be a shadow biquandle $(X,Y)$-coloring of $\mathcal B$. Then the $0$-level cross-section with colors inherited from $\tilde{\mathcal{C}}$ is a shadow biquandle $(X,Y)$-coloring of $D$. This gives a map from ${\rm Col}^{\rm SB}_{(X,Y)}(\mathcal B)$ to ${\rm Col}^{\rm SB}_{(X,Y)}(D)$. Conversely, using the same argument as in \cite{As}, we obtain the inverse map from ${\rm Col}^{\rm SB}_{(X,Y)}(D)$ to ${\rm Col}^{\rm SB}_{(X,Y)}(\mathcal B)$. 
\end{proof}

 Let $D$ be a marked graph diagram and let $D_+$ and $D_-$ be the positive and negative resolution of $D$, respectively. Let $D_+=D_1 \rightarrow D_2 \rightarrow\cdots\rightarrow D_r=O$ and $D_-=D_1' \rightarrow D_2' \rightarrow\cdots\rightarrow D_s'=O'$ be sequences of link diagrams from $D_+$ and $D_-$ to trivial link diagrams $O$ and $O'$, respectively, related by ambient isotopies of $\mathbb R^2$ and Reidemeister moves. Let $I^3_+, {I^3_-}, \epsilon_{tm}(i)$ and $\epsilon_{b}(i)$ be the same as in Section~\ref{sect-qcocm}.
Let $\tilde{\mathcal{C}}: S(D)\cup R(D) \to X\cup Y$ be a shadow biquandle $(X,Y)$-coloring of $D$ and let $\theta$ be a shadow biquandle $3$-cocycle. 
Let $i\in I^3_+ \amalg {I^3_-}$ and let $R$ be the source region of the stage $i$. The {\it shadow (Boltzman) weight} $W^{\rm SB}_\theta(i,\tilde{\mathcal{C}})$ at $i$ with respect to $\tilde{\mathcal C}$ is defined by $$W^{\rm SB}_\theta(i,\tilde{\mathcal{C}})=\theta(y,x_1,x_2,x_3)^{\epsilon_{tm}(i)\epsilon_{b}(i) },$$ where $y$ is the color of the source region $R$ and $x_1, x_2$ and $x_3$ are the colors of the bottom, middle and top semi-arcs facing $R$ at the stage $i$, respectively, as illustrated in Fig.~\ref{fig-slab1}. 
\begin{figure}[ht] 
\begin{center}
\resizebox{0.80\textwidth}{!}{%
\includegraphics{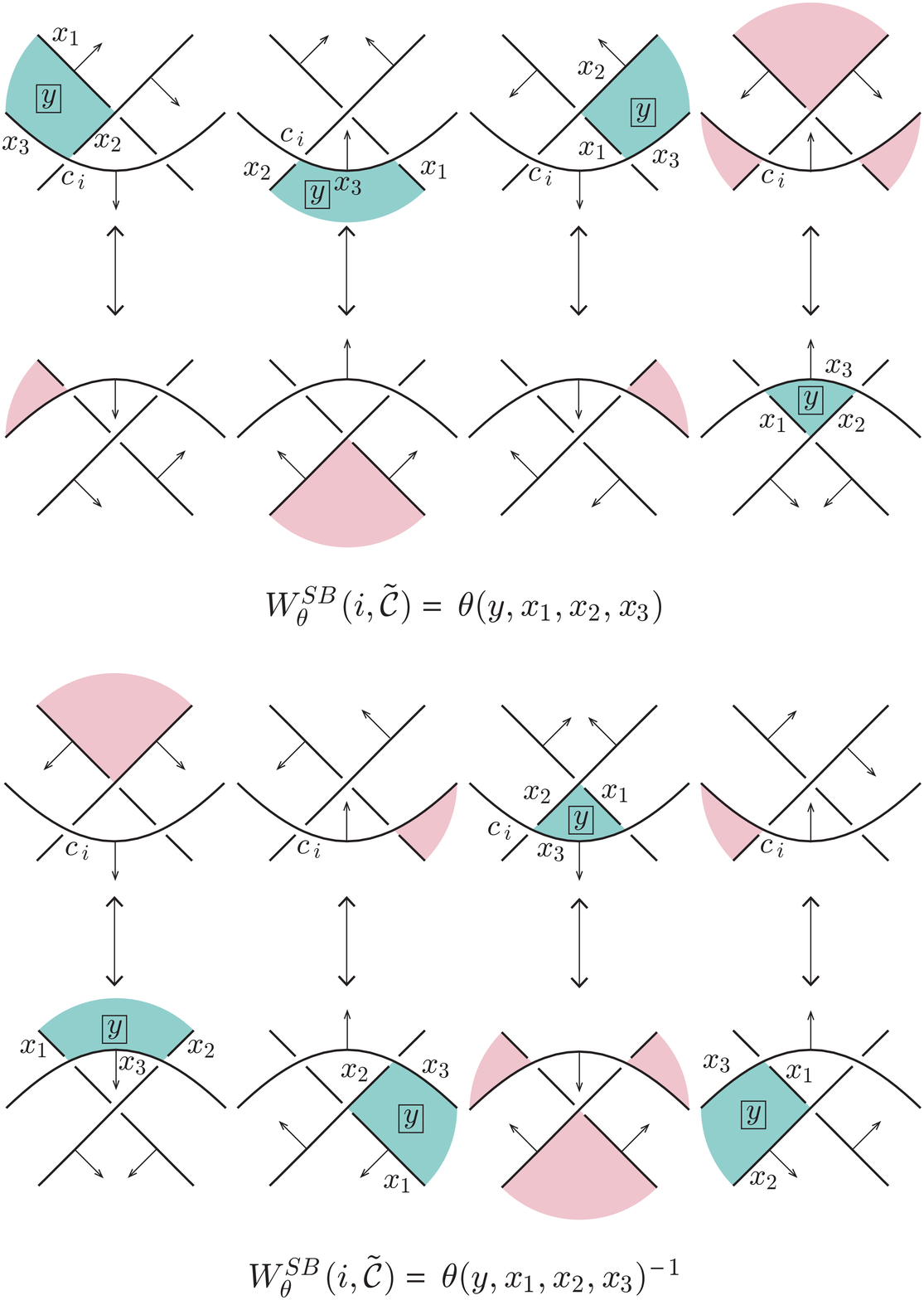} } 
\caption{Shadow Boltzman weight  at $i\in I^3_+ \amalg {I^3_-}$}\label{fig-slab1} 
\end{center} 
\end{figure}  

\begin{definition}\label{def-cocyclesm} 
Let $D$ be a marked graph diagram. The {\it shadow state-sum} or {\it shadow partition function} of $D$ (associated with  $\theta$) is defined by 
\begin{align*} 
\Phi_\theta^{\rm SB}(D; A)=\sum_{\tilde{\mathcal C}\in {\rm Col}^{\rm SB}_{(X,Y)}(D)}\biggl(\prod_{ i\in I^3_+ }W^{\rm SB}_\theta(i,\tilde{\mathcal C})\prod_{j\in I^3_- }W^{\rm SB}_\theta(j,\tilde{\mathcal C})^{- 1}\biggr).\end{align*} 
\end{definition}  

\begin{theorem}\label{thm-cocyclesm} 
Let $\mathcal L$ be a surface-link and let $D$ be a marked graph diagram of $\mathcal L$. Then for any shadow biquandle $3$-cocycle $\theta,$ $\Phi_\theta^{\rm SB}(\mathcal L; A)=\Phi_\theta^{\rm SB}(D; A)$.  
\end{theorem}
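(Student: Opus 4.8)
The plan is to follow exactly the same strategy as in the proof of Theorem~\ref{thm-cocyclem}, using the broken surface diagram $\mathcal B = \mathcal B(D)$ associated with $D$ as a bridge. By Theorem~\ref{thm-shinv}, the shadow state-sum $\Phi^{\rm SB}_\theta(\mathcal B(D);A)$ equals $\Phi^{\rm SB}_\theta(\mathcal L;A)$, so it suffices to prove the identity $\Phi^{\rm SB}_\theta(D;A) = \Phi^{\rm SB}_\theta(\mathcal B(D);A)$. By Theorem~\ref{thm-shbq-col} there is a one-to-one correspondence between ${\rm Col}^{\rm SB}_{(X,Y)}(D)$ and ${\rm Col}^{\rm SB}_{(X,Y)}(\mathcal B(D))$; write $\widetilde{\mathcal C}$ for a shadow coloring of $D$ and $\widehat{\mathcal C}$ for the corresponding shadow coloring of $\mathcal B(D)$. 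Then the theorem reduces to the following claim: for each $\widetilde{\mathcal C}\in {\rm Col}^{\rm SB}_{(X,Y)}(D)$,
$$\prod_{i\in I^3_+}W^{\rm SB}_\theta(i,\widetilde{\mathcal C})\prod_{j\in I^3_-}W^{\rm SB}_\theta(j,\widetilde{\mathcal C})^{-1} = \prod_{\tau\in T(\mathcal B(D))}W^{\rm SB}_\theta(\tau,\widehat{\mathcal C}).$$

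The bulk of the argument carries over verbatim from the proof of Theorem~\ref{thm-cocyclem}. First I would recall the decomposition $T(\mathcal B(D)) = \{\tau_i \mid i\in I^3_+\}\cup\{\tau_j' \mid j\in I^3_-\}$, where $\tau_i$ (resp.\ $\tau_j'$) is the unique triple point in the piece $M_{(i)}$ (resp.\ $M_{(j)}'$) created by the Reidemeister move $R_3$ at stage $i$ (resp.\ $j$); the $R_1$ and $R_2$ moves and the ambient isotopies produce no triple points, and there are no triple points in $\mathcal B^1_1$. Next I would invoke the sign computations \eqref{eq-taui} and \eqref{eq-tauj}, namely $\epsilon(\tau_i) = \epsilon_{tm}(i)\epsilon_b(i)$ for $i\in I^3_+$ and $\epsilon(\tau_j') = -\epsilon_{tm}(j)\epsilon_b(j)$ for $j\in I^3_-$; these depend only on the orientations/co-orientations of the three strands and are unaffected by the presence of region colors, so they apply here without change.

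The one point requiring genuine new verification is that the region color $y$ appearing in the shadow weight $W^{\rm SB}_\theta(i,\widetilde{\mathcal C}) = \theta(y,x_1,x_2,x_3)^{\epsilon_{tm}(i)\epsilon_b(i)}$ at stage $i$ coincides with the region color of the source octant $\mathcal R$ of the triple point $\tau_i$ under the coloring $\widehat{\mathcal C}$. Here I would argue as follows: the source region $R$ of the stage $i$ is a planar region facing the bottom, middle, top semi-arcs so that all co-orientations point outward, and the color $y := \widetilde{\mathcal C}(R)$ is assigned by the shadow coloring rule of Definition~\ref{defn-sqc-1}(3). In the three-dimensional picture, $R\times I$ is split by the bottom sheet into two regions, one of which is the source octant $\mathcal R$ of $\tau_i$; the color of $\mathcal R$ under $\widehat{\mathcal C}$ is obtained by the shadow coloring rule of the double point curves (Fig.~\ref{fig-sdpc}), and by the construction of $\mathcal B(D)$ from $D$ (the $0$-level cross-section carries the colors of $D$, and region colors propagate coherently) this agrees with $\widetilde{\mathcal C}(R) = y$. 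Once this is established, the same reasoning that gave $W^B_\theta(i,\mathcal C) = W^B_\theta(\tau_i,\mathcal C)$ and $W^B_\theta(j,\mathcal C) = W^B_\theta(\tau_j',\mathcal C)^{-1}$ in Theorem~\ref{thm-cocyclem} shows $W^{\rm SB}_\theta(i,\widetilde{\mathcal C}) = W^{\rm SB}_\theta(\tau_i,\widehat{\mathcal C})$ and $W^{\rm SB}_\theta(j,\widetilde{\mathcal C}) = W^{\rm SB}_\theta(\tau_j',\widehat{\mathcal C})^{-1}$, and taking the product over all stages yields the claim. The main obstacle — the only place where the shadow case is not a formal copy of the non-shadow one — is exactly this bookkeeping of how the planar region color at a stage $i$ sits inside the four-dimensional picture and matches the source-octant color of the associated triple point; everything else is identical to the proof already given for $\Phi^B_\theta$.
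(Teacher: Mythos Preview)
Your proposal is correct and follows exactly the approach the paper takes: the paper's own proof is the single sentence ``By the same argument as the proof of Theorem~\ref{thm-cocyclem}, the assertion follows,'' and your write-up is a faithful (and more detailed) expansion of that, including the one extra bookkeeping point about the region color $y$ matching the source-octant color.
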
 

\begin{proof}
By the same argument as the proof of Theorem~\ref{thm-cocyclem}, the assertion follows.
\end{proof}


\section*{Acknowlegements}
The first and second authors were supported by JSPS KAKENHI Grant Numbers JP26287013 and JP24244005. The third author was supported by JSPS overseas post doctoral fellow Grant Number JP15F15319. The fourth author was supported by Basic Science Research Program through the National Research Foundation of Korea (NRF) funded by the Ministry of Education, Science and Technology (NRF-2016R1A2B4016029). 



\begin{thebibliography}{9}

\bibitem{As} 
S. Ashihara, Calculating the fundamental biquandles of surface-links from their ch-diagrams. \textit{J. Knot Theory Ramifications} {\bf 21}  (2012),  no. 10, 1250102 (23 pages).  

\bibitem{Ca} T. Carrell, The surface biquandle, \textit{Pomona College}, 2009.

\bibitem{CS1} J. S. Carter and M. Saito, Knotted surfaces and their diagrams, Mathematical Surveys and Monographs, {\bf55}, {\it American Mathematical Society, Providence, RI,} 1998.

\bibitem{CKS} J. S. Carter, S. Kamada, M. Saito, Geometric interpretations of quandle homology, {\it J. Knot Theory Ramifications} {\bf10} (2001), 345--386.

\bibitem{CJKLS} 
J. S. Carter, D. Jelsovsky, S. Kamada, L. Langford and M. Saito, Quandle cohomology and state-sum invariants of knotted curves and surfaces,
\textit{Trans. Amer. Math. Soc.} {\bf 355} (2003), 3947--3989.   

\bibitem{CES} J. S. Carter, M. Elhamdadi and M. Saito, Homology Theory for the Set-Theoretic Yang-Baxter Equation and Knot Invariants from Generalizations of Quandles, {\it Fund. Math.} {\bf184}  (2004), 31--54.

\bibitem{FeRo} 
R. Fenn and C. Rourke, Racks and links in codimension two, \textit{J. Knot Theory Ramifications} {\bf 1} (1992), 343--406.

\bibitem{FeRoSa1} R. Fenn, C. Rourke and B. Sanderson, Trunks and classifying spaces, \textit{ Appl. Categ. Structures}  {\bf 3} (1995),  no. 4, 321--356.

\bibitem{Jo} 
D. Joyce, A classifying invariant of knots, the knot quandle, \textit{J. Pure Appl. Algebra} {\bf23} (1982), 37--65.

\bibitem{KKL} S. Kamada, J. Kim and S. Y. Lee, Computations of quandle cocycle invariants of surface-links using marked graph diagrams, \textit{J. Knot Theory Ramifications} {\bf 24} (2015), no. 10, 1540010 (35 pages).

\bibitem{KK}
C. Kearton and V. Kurlin, All 2-dimensional links in 4-space live inside a universal 3-dimensional polyhedron, \textit{Algebr. Geom. Topol.} {\bf 8} (2008), 1223--1247.

\bibitem{KJL} J. Kim, Y. Joung and S. Y. Lee, On the Alexander biquandles of oriented surface-links via marked graph diagrams, \textit{J. Knot Theory Ramifications} {\bf 23} (2014), no. 7, 1460007 (26 pages).

\bibitem{KJL2}
J. Kim, Y. Joung and S. Y. Lee, On generating sets of Yoshikawa moves for marked graph diagrams of surface-links, \textit{J. Knot Theory Ramifications} {\bf 24} (2015), no. 4, 1550018 (21 pages).

\bibitem{KaRa} L. H. Kauffman and D. E. Radford, Bi-oriented quantum algebras, and generalized Alexander polynomial for virtual links, {\it Contemp. Math.}, {\bf 318}  (2003), 113--140.

\bibitem{KSS} 
A. Kawauchi, T. Shibuya and S. Suzuki, Descriptions on surfaces in four-space, I Normal forms, \textit{Math. Sem. Notes Kobe Univ.} {\bf 10} (1982), 75--125.

\bibitem{Le4} S. Y. Lee, Shadow biquandle cohomology and state-sum invariants of link cobordisms, preprint.

\bibitem{Lo}
S. J. Lomonaco, Jr., The homotopy groups of knots I. How to
compute the algebraic $2$-type, \textit{Pacific J. Math.} {\bf 95} (1981), 349--390.


\bibitem{Ma} S. V. Matveev, Distributive groupoids in knot theory. (Russian) Mat. Sb. (N.S.)  119 (161)  (1982), no. 1, 78--88; Math. USSR-Sb. {\bf 47} (1984), 73--83 (in English).

\bibitem{NeVo} S. Nelson and J. Vo, Matrices and finite biquandles, \textit{Homology, Homotopy Appl.} {\bf8}  (2006), no. 2, 51--73.

\bibitem{NeRo} S. Nelson and J. Rosenfield, Bikei homology, \textit{Homotopy \& Applications}. Vol. {\bf 19} (2017), 23--35.

\bibitem{Po} M. Polyak, Minimal generating sets of Reidemeister moves, \textit{Quantum Topol.}  {\bf 1}  (2010),  no. 4, 399--411.

\bibitem{Ro} 
D. Roseman, Reidemeister-type moves for surfaces in four dimensional space, \textit{Banach Center Publications} {\bf 42} (1998), Knot theory, 347--380.

\bibitem{Sw}
F. J. Swenton, On a calculus for $2$-knots and surfaces in $4$-space,  \textit{J. Knot Theory Ramifications} {\bf 10} (2001), 1133--1141.

\bibitem{Yo}
K. Yoshikawa, An enumeration of surfaces in four-space,
 \textit{Osaka J. Math.} {\bf 31} (1994), 497--522. 

\end{thebibliography}
\end{document}